\documentclass[11pt]{amsart}
\usepackage[a4paper, margin=3cm]{geometry} % page settings
\usepackage{amsmath,amssymb,amscd,amsthm,amsxtra}
\usepackage{dsfont}
\usepackage{graphicx, mathrsfs}
\usepackage{amssymb,amsmath,amsthm}
\usepackage{mathrsfs,dsfont}
\usepackage{fancyhdr}
\usepackage{enumerate}
\usepackage{hyperref}
\usepackage[toc,page]{appendix}
\usepackage{color}
\usepackage{caption}
\usepackage{subcaption}

\usepackage{mathtools}
\usepackage{float}
\usepackage{tikz}
\usetikzlibrary{patterns}
\usepackage{quiver}
\newtheorem{thm}{Theorem}[section]
\newtheorem{prop}[thm]{Proposition}
\newtheorem{lem}[thm]{Lemma}
\newtheorem{cor}[thm]{Corollary}
 
\theoremstyle{definition}
\newtheorem{definition}[thm]{Definition}

\newtheorem{assumption}[thm]{Assumption}
\theoremstyle{remark}

\newcommand{\bfj}{{\bf j}}

\newcommand{\bfi}{{\bf i}}

\newtheorem{remark}[thm]{Remark}
\numberwithin{equation}{section}

\begin{document}
	
	%%
	%% The title of the paper goes here.  Edit to your title.
	%%
	
	\title{Analysis of a numerical scheme for 3-wave kinetic equations}
	
	%%
	%% Now edit the following to give your name and address:
	%% 
	
	\author[M.-B. Tran]{Minh-Binh Tran}
	\address{Department of Mathematics, Texas A\&M University, College Station, TX 77843, USA}
	\email{minhbinh@tamu.edu} 
	\thanks{B. W and M.-B. T are  funded in part by  the  NSF Grants DMS-2204795, DMS-2305523,      NSF CAREER  DMS-2303146, DMS-2306379.}
	
	\author[B. Wang]{Bangjie Wang}
	\address{Department of Mathematics, Texas A\&M University, College Station, TX 77843, USA}
	\email{bangjiewang@tamu.edu}

	\begin{abstract}
		
		Several numerical schemes for  3-wave kinetic equations have been proposed in recent work and shown to be accurate and computationally efficient \cite{das2024numerical,walton2023numerical,walton2024numerical,walton2022deep}. However, their rigorous numerical analysis remains open. This paper aims to close this gap. We establish a comprehensive well-posedness and qualitative theory for the
		discrete equation arising from those schemes. We prove
		global existence, uniqueness, and Lipschitz stability of nonnegative classical solutions
		in $\ell^1(\mathbb{N})$, together with uniform bounds and decay of moments. We further
		show exponential energy decay and a sharp creation and propagation of positivity
		characterized by the arithmetic structure of the initial support. Finally, we obtain
		the propagation and instantaneous creation of polynomial, Mittag-Leffler, and
		exponential moments, providing quantitative control of high energy tails. We validate the theoretical findings by numerical results.
	\end{abstract}
	
	\maketitle
	\tableofcontents
	\label{contents}
	\section{Introduction}
	Over the past six decades, wave turbulence theory has emerged as a fundamental
	framework for describing the statistical behavior of weakly nonlinear wave
	systems across a wide range of physical settings. Prominent examples include
	inertial waves in rotating fluids, Alfv\'en wave turbulence in the solar wind,
	and wave dynamics in magnetized plasmas relevant to fusion devices, among many
	others. The foundations of the theory trace back to the seminal work of Peierls
	\cite{Peierls:1993:BRK}, with major subsequent developments by Benney and
	Saffman \cite{benney1966nonlinear}, Zakharov and Falkovich
	\cite{zakharov1967weak}, Benney and Newell \cite{benney1969random}, and, in
	particular, Hasselmann
	\cite{hasselmann1962non,hasselmann1974spectral}. These contributions ultimately
	led to the formulation of the classical 3-wave and 4-wave kinetic
	equations, which govern the redistribution of energy among weakly interacting
	waves. Rigorous derivations of wave kinetic equations have been achieved in a series of recent breakthrough works by Deng and Hani
	\cite{deng2019derivation,deng2021propagation,deng2023long,deng2023,deng2021full}.
	For a comprehensive
	physical discussion of wave turbulence theory and its applications, we refer to
	\cite{Nazarenko:2011:WT,PomeauBinh,zakharov2012kolmogorov}.
	
	In the 3-wave case, the wave kinetic equation takes the general form
	\begin{equation}\label{WT1}
		\begin{aligned}
			\partial_t f(t,p)
			= \iint_{\mathbb{R}^{2d}}
			\Big[
			R_{p,p_1,p_2}[f]
			- R_{p_1,p,p_2}[f]
			- R_{p_2,p,p_1}[f]
			\Big]
			\,\mathrm{d}p_1\,\mathrm{d}p_2,
			\qquad
			f(0,p)=f_0(p),
		\end{aligned}
	\end{equation}
	where \(f(t,p)\) denotes the wave density at wavenumber
	\(p\in\mathbb{R}^d\), with spatial dimension \(d\ge 2\), and \(f_0\) is the
	prescribed initial distribution. The interaction kernel is given by
	\begin{equation}\label{WT2}
		R_{p,p_1,p_2}[f]
		:=
		|V_{p,p_1,p_2}|^2
		\delta(p-p_1-p_2)
		\delta(\Omega-\Omega_1-\Omega_2)
		\big(f_1 f_2 - f f_1 - f f_2\big),
	\end{equation}
	where we use the shorthand notations
	\[
	f=f(t,p), \qquad
	f_j=f(t,p_j), \qquad
	\Omega=\Omega(p), \qquad
	\Omega_j=\Omega(p_j),
	\]
	for \(j\in\{1,2\}\).
	Here, \(\omega(p)\) is the dispersion relation of the underlying wave system,
	and \(V_{p,p_1,p_2}\) denotes the interaction coefficient. The theoretical analysis of $3$--wave kinetic equations has been carried out in a wide variety of physical settings, including phonon interactions in anharmonic crystal lattices \cite{CraciunBinh,EscobedoBinh,GambaSmithBinh,tran2020reaction}, capillary wave dynamics \cite{nguyen2017quantum}, beam wave propagation \cite{rumpf2021wave}, stratified oceanic flows \cite{GambaSmithBinh,kim2025wave}, and Bose--Einstein condensates \cite{AlonsoGambaBinh,cortes2020system,EPV,escobedo2023linearized1,escobedo2023linearized,ToanBinh,nguyen2017quantum,soffer2018dynamics,staffilani2025evolution,staffilani2025finite,staffilani2025formation}.
	
	In a recent series of works, we introduced  several numerical schemes specifically designed for the isotropic 3-wave kinetic equations, with a focus on both accuracy and computational efficiency
	\cite{das2024numerical,walton2023numerical,walton2024numerical,walton2022deep}. Nevertheless, a rigorous theoretical analysis of these schemes remains absent. This paper aims to address this deficiency. 
	In particular, the scheme developed in \cite{das2024numerical} is based on the nonlinear coagulation--fragmentation approximation of the 3-wave kinetic equation \eqref{WT1}--\eqref{WT2} proposed by Connaughton and Newell in \cite{connaughton2010dynamical}.
	The coagulation--fragmentation model considered in \cite{das2024numerical}, which serves as an effective approximation of the 3-wave kinetic dynamics, takes the following form:
	
	\begin{equation}\label{coagufrag}
		\frac{\partial f_\omega}{\partial t}
		=\mathcal{O}[f_\omega](t),
		\qquad
		f_\omega(0)=f_0(\omega),
		\qquad
		\omega\in\mathbb{R}_+,
	\end{equation}
	where the operator \(\mathcal{O}\) is defined by
	\[
	\mathcal{O}[f_\omega](t)
	= S_1[f_\omega]-S_2[f_\omega]-S_3[f_\omega]-V[f_\omega],
	\]
	with
	\[
	\begin{aligned}
		S_1[f_\omega]
		= {} & \int_0^\omega K_1(\omega-\mu,\mu)\,f_{\omega-\mu}f_\mu\,\mathrm d\mu
		- \int_0^\infty K_1(\mu,\omega)\,f_\omega f_\mu\,\mathrm d\mu
		- \int_0^\infty K_1(\omega,\mu)\,f_\omega f_\mu\,\mathrm d\mu, \\[0.4em]
		S_2[f_\omega]
		= {} & -\int_0^\omega K_2(\mu,\omega-\mu)\,f_\omega f_\mu\,\mathrm d\mu
		+ \int_\omega^\infty K_2(\omega,\mu-\omega)\,f_{\mu-\omega}f_\mu\,\mathrm d\mu
		+ \int_0^\infty K_2(\omega,\mu)\,f_\omega f_{\omega+\mu}\,\mathrm d\mu, \\[0.4em]
		S_3[f_\omega]
		= {} & -\int_0^\omega K_3(\mu,\omega-\mu)\,f_\omega f_{\omega-\mu}\,\mathrm d\mu
		+ \int_\omega^\infty K_3(\omega,\mu-\omega)\,f_\omega f_\mu\,\mathrm d\mu
		+ \int_0^\infty K_3(\omega,\mu)\,f_\mu f_{\omega+\mu}\,\mathrm d\mu, \\[0.4em]
		V[f_\omega]
		= {} & \gamma(\omega)\,f_\omega .
	\end{aligned}
	\]

	In particular, the operators \(S_2\) and \(S_3\) share the same structural form, differing only through their respective kernels. The term $V[f_\omega]$ represents the viscous dissipation associated with the underlying wave system \cite{connaughton2010dynamical}. We also note that a recent work~\cite{banks2025new} proposes a novel approach to the direct discretization of 3-wave kinetic equations, with applications to a 2-dimensional nonlinear Schr\"odinger system.

	In this work, we develop a comprehensive well-posedness and qualitative theory for the
	discrete equation arising from the finite-volume scheme associated with
	\eqref{coagufrag}. 	 We assume that the kernels \(K_1,K_2,K_3\) are symmetric (see  \cite{connaughton2010dynamical,das2024numerical}) 
	\[
	K_n(\omega,\mu)=K_n(\mu,\omega)
	=\omega^{\alpha_n}\mu^{\alpha_n}(\omega+\mu)^{\beta_n},
	\qquad n\in\{1,2,3\},
	\]
	and
	\[
	\gamma(\omega)=\omega^\delta,
	\]
	where \(\alpha_n,\beta_n,\delta\ge 0\) are fixed real parameters.
	
	Under this symmetry assumption, the operators \(S_1,S_2,S_3\) can be rewritten as
	\[
	\begin{aligned}
		S_1[f_\omega]
		= {} & \int_0^\omega K_1(\omega-\mu,\mu)\,f_{\omega-\mu}f_\mu\,\mathrm d\mu
		- 2\int_0^\infty K_1(\omega,\mu)\,f_\omega f_\mu\,\mathrm d\mu, \\[0.4em]
		S_2[f_\omega]
		= {} & -\int_0^\omega K_2(\omega-\mu,\mu)\,f_\omega f_\mu\,\mathrm d\mu
		+ \int_0^\infty K_2(\omega,\mu)\,f_\omega f_{\omega+\mu}\,\mathrm d\mu
		+ \int_0^\infty K_2(\omega,\mu)\,f_\mu f_{\omega+\mu}\,\mathrm d\mu, \\[0.4em]
		S_3[f_\omega]
		= {} & -\int_0^\omega K_3(\omega-\mu,\mu)\,f_\omega f_\mu\,\mathrm d\mu
		+ \int_0^\infty K_3(\omega,\mu)\,f_\omega f_{\omega+\mu}\,\mathrm d\mu
		+ \int_0^\infty K_3(\omega,\mu)\,f_\mu f_{\omega+\mu}\,\mathrm d\mu .
	\end{aligned}
	\] We establish global existence and uniqueness of nonnegative
	classical solutions in $\ell^1(\mathbb{N})$, together with uniform-in-time bounds on
	the first and higher-order moments. In addition, we prove Lipschitz continuous
	dependence of solutions on the initial data in the $\ell^1$ norm, ensuring stability
	of the discrete dynamics. The main new idea of the analysis is to identify a safe, forward-invariant region of the
	state space and to show that the dynamics never drives solutions outside this set.  
	This admissible region is characterized by three natural constraints: nonnegativity,
	bounded total mass, and bounded higher-order energy. To verify invariance, we first
	introduce suitable truncations so that the interaction operator is well behaved and
	analyze small forward time steps. Exploiting the structural properties of the equation,
	we show that  higher-order energy is controlled through a barrier mechanism: when
	the energy becomes large, the dynamics forces it to decrease. Finally, continuity
	properties of the operator allow us to remove the truncation, yielding a tangency
	condition of the vector field to the admissible set. This geometric property provides
	the key mechanism underlying global existence, uniqueness, and stability.

	We further show that the discrete energy decays exponentially in time at a rate determined by the discretization parameter, which implies convergence of solutions to zero in the long-time limit. In addition, we establish a sharp creation-and-propagation result for positivity: for any positive time, the solution is strictly positive exactly on the arithmetic lattice generated by the greatest common divisor of the support of the initial data and vanishes identically outside this set.
	The proof is based on a new, crucial observation: if the solution is positive at some indices, the gain terms immediately generate positivity at new indices obtained by combining them, while the loss terms cannot suppress this effect. Iterating this mechanism shows that positivity propagates to all indices reachable from the initially positive ones. A number-theoretic argument then identifies this reachable set as the sublattice determined by the greatest common divisor of the initial support. Finally, uniqueness of solutions implies that positivity cannot appear outside this sublattice, so the dynamics both creates and confines positivity in a precise manner. 
	
	We also investigate the evolution of moments. Polynomial moments of arbitrary order
	are propagated whenever they are initially finite, while higher-order polynomial
	moments are instantaneously created for any positive time. Beyond algebraic moments,
	we introduce Mittag-Leffler moments and prove their propagation under suitable
	structural conditions on the collision kernel. As a consequence, we obtain the creation
	of exponential-type moments, providing quantitative control of the high-energy tails
	of the solution. The key idea of the proof is to truncate the initial data so that only finitely many
	sizes are present. Positivity and nontriviality ensure that the total mass remains strictly positive, while mass conservation provides uniform in time bounds that are independent of the truncation. We then derive an inequality for a prescribed higher order moment, consisting of a bounded production term controlled by the mass and a strong damping term that dominates when the moment becomes large. We observe that the use of forward invariant regions in the proof of well-posedness naturally extends to the propagation of moments. Indeed, the propagation of a given moment is equivalent to the existence of a forward-invariant region within which the solution evolves. Classically, proving the propagation of moments relies on the analysis of differential inequalities \cite{GambaSmithBinh,kim2025wave}. Such approaches are typically more involved, as they require continuity of the moment in time in order to extend the interval of existence and iterate the argument. In contrast, our proof is based on showing that the operator admits a forward invariant region with a finite associated moment. This argument depends only on the structural properties of the operator, which significantly simplifies the analysis and yields a sharper result. Finally, we remove the truncation by exploiting stability with respect to the initial data together with a lower semicontinuity argument, thereby transferring the uniform bounds to the original solution.
	
	Finally, we validate our theoretical results through numerical
	experiments.

	\section{The Setting and main results}
	
	\subsection{The setting}

	The work \cite{das2024numerical} proposed a finite volume scheme for \eqref{coagufrag}. 
	To make the presentation self-contained, we detail in this section the derivation of the numerical scheme.
	We adopt the finite volume framework of \cite{das2024numerical}, adapting it to the unbounded domain $\mathbb{R}_{\ge 0}$ and restricting ourselves to uniform meshes.
	
	We discretize $\mathbb{R}_{\ge 0}$ using a uniform mesh consisting of intervals
	\[
	[(i-1)h,ih], \qquad i=1,2,\dots,
	\]
	where $h>0$ denotes the mesh size.
	Throughout the paper, we use $\bfi$ to denote the physical location $ih$ and $\bfj$ to denote $jh$.
	
	We define the cell averages
	\[
	f_i(t)=\int_{(i-1)h}^{ih} f_\omega(t)\,\mathrm d\omega,
	\qquad
	f_i(0)=\int_{(i-1)h}^{ih} f_0(\omega)\,\mathrm d\omega .
	\]
	
	Integrating the continuous equation over the cell $[(i-1)h,ih]$ yields
	\begin{align*}
		\frac{\partial f_i}{\partial t}
		&= \int_{(i-1)h}^{ih}\frac{\partial f_\omega}{\partial t}\,\mathrm d\omega
		= \int_{(i-1)h}^{ih}\mathcal O[f_\omega]\,\mathrm d\omega \\
		&= \int_{(i-1)h}^{ih}\int_0^\omega K_1(\omega-\mu,\mu) f_{\omega-\mu} f_\mu
		\,\mathrm d\mu\,\mathrm d\omega
		-2\int_{(i-1)h}^{ih}\int_0^\infty K_1(\omega,\mu) f_\omega f_\mu
		\,\mathrm d\mu\,\mathrm d\omega \\
		&\quad -\int_{(i-1)h}^{ih}\int_0^\omega
		\bigl[K_2(\omega-\mu,\mu)+K_3(\omega-\mu,\mu)\bigr]
		f_\omega f_\mu\,\mathrm d\mu\,\mathrm d\omega \\
		&\quad +\int_{(i-1)h}^{ih}\int_0^\infty
		\bigl[K_2(\omega,\mu)+K_3(\omega,\mu)\bigr]
		f_\omega f_{\omega+\mu}\,\mathrm d\mu\,\mathrm d\omega \\
		&\quad +\int_{(i-1)h}^{ih}\int_0^\infty
		\bigl[K_2(\omega,\mu)+K_3(\omega,\mu)\bigr]
		f_\mu f_{\omega+\mu}\,\mathrm d\mu\,\mathrm d\omega
		-\int_{(i-1)h}^{ih}\gamma(\omega)f_\omega\,\mathrm d\omega .
	\end{align*}
	
	Assuming that $f$ is piecewise constant on each mesh cell, we approximate, for example,
	\begin{align*}
		&\int_{(i-1)h}^{ih}\int_0^\omega
		K_1(\omega-\mu,\mu) f_{\omega-\mu} f_\mu
		\,\mathrm d\mu\,\mathrm d\omega \\
		&\approx
		\sum_{j=1}^{i-1}\int_{(i-1)h}^{ih}\int_{(j-1)h}^{jh}
		K_1(\omega-\mu,\mu) f_{\omega-\mu} f_\mu
		\,\mathrm d\mu\,\mathrm d\omega \\
		&\approx
		\sum_{j=1}^{i-1}
		K_1((i-j)h,jh)
		\int_{(i-1)h}^{ih}\int_{(j-1)h}^{jh}
		f_{\omega-\mu} f_\mu
		\,\mathrm d\mu\,\mathrm d\omega \\
		&\approx
		\sum_{j=1}^{i-1} K_1((i-j)h,jh)\, f_{i-j} f_j .
	\end{align*}
	Analogous approximations apply to the remaining terms.
	
	We therefore obtain the finite volume scheme
	\begin{equation}\label{eqn:discrete_pde}
		\frac{\partial f}{\partial t}
		= S^{(1)}[f]+S^{(2)}[f]+S^{(3)}[f]-V[f],
	\end{equation}
	where
	\[
	\begin{aligned}
		S_i^{(1)}[f]
		= {} & \sum_{j=1}^{i-1} K_1(\bfj,\bfi-\bfj) f_j f_{i-j}
		-2\sum_{j=1}^{\infty} K_1(\bfi,\bfj) f_i f_j, \\
		S_i^{(2)}[f]
		= {} & -\sum_{j=1}^{i-1} K_2(\bfi-\bfj,\bfj) f_i f_j
		+\sum_{j=i+1}^{\infty} K_2(\bfj,\bfi) f_i f_j
		+\sum_{j=i+1}^{\infty} K_2(\bfi,\bfj-\bfi) f_j f_{j-i}, \\
		S_i^{(3)}[f]
		= {} & -\sum_{j=1}^{i-1} K_3(\bfi-\bfj,\bfj) f_i f_j
		+\sum_{j=i+1}^{\infty} K_3(\bfj-\bfi,\bfi) f_i f_j
		+\sum_{j=i+1}^{\infty} K_3(\bfi,\bfj-\bfi) f_j f_{j-i}, \\
		V_i[f]
		= {} & \gamma(\bfi) f_i .
	\end{aligned}
	\]
	
	We assume that, for $n\in\{1,2,3\}$,
	\[
	K_n(\bfi,\bfj)
	=\bfi^{\alpha_n}\bfj^{\alpha_n}(\bfi+\bfj)^{\beta_n},
	\qquad
	\gamma(\bfi)=\bfi^\delta ,
	\]
	with positive parameters \(\{\alpha_n,\beta_n\}_{n\in\{1,2,3\}},\delta\) satisfying
	\begin{equation}\label{asp:para}		
		\delta>\max_{n\in\{1,2,3\}}(\alpha_n+\beta_n),\qquad
		\delta>2\alpha_1+\beta_1-1.
	\end{equation}

	Define
	\[
	S[f]:=S^{(1)}[f]+S^{(2)}[f]+S^{(3)}[f],
	\qquad
	O[f]:=S[f]-V[f].
	\]
	
	Finally, the operator $O[f]$ admits the gain--loss decomposition
	\begin{equation}\label{eqn:opt_gain_loss}
		O[f]=O^+[f]-O^-[f],
	\end{equation}
	where
	\begin{align*}
		O^+[f]
		= {} &
		\sum_{j=1}^{i-1} K_1(\bfj,\bfi-\bfj) f_j f_{i-j}
		+\sum_{j=i+1}^{\infty} K_2(\bfj-\bfi,\bfi) f_i f_j
		+\sum_{j=i+1}^{\infty} K_2(\bfi,\bfj-\bfi) f_j f_{j-i} \\
		&+
		\sum_{j=i+1}^{\infty} K_3(\bfj-\bfi,\bfi) f_i f_j
		+\sum_{j=i+1}^{\infty} K_3(\bfi,\bfj-\bfi) f_j f_{j-i}, \\
		O^-[f]
		= {} &
		2\sum_{j=1}^{\infty} K_1(\bfi,\bfj) f_i f_j
		+\sum_{j=1}^{i-1} K_2(\bfi-\bfj,\bfj) f_i f_j
		+\sum_{j=1}^{i-1} K_3(\bfi-\bfj,\bfj) f_i f_j
		+\gamma(\bfi) f_i.
	\end{align*}
	
	Throughout the paper, the notation \(A \lesssim B\) denotes the existence of a positive constant \(C\), independent of the relevant variables, such that
	\[
	A \le C B .
	\]
	When the constant \(C\) depends on parameters \(a,b,\dots\), we write \(A \lesssim_{a,b,\dots} B\).
	
	We also adopt the convention \(\mathbb{N}=\{1,2,3,\dots\}\).

	\subsection{Main results}
	
	{\it This section is devoted to the statement of the analytical results established for the numerical scheme \eqref{eqn:discrete_pde}. The proofs of those results will be provided in Subsections \ref{SecTh1}, \ref{SecTh2}, \ref{SecTh3}, \ref{SecTh4}, \ref{SecTh5}, \ref{SecTh6} and \ref{SecTh7}. Numerical confirmations of these results are presented later in Section~\ref{Sec:Numerical}.
	}

	\begin{definition}
		For a function \(f:\mathbb{N}\to\mathbb{R}_{\ge 0}\), we define its \(k\)-th moment by
		\[
		m_k\langle f\rangle := \sum_{i=1}^{\infty} f_i\, \bfi^{\,k},
		\qquad k\ge 0 .
		\]
		
	\end{definition}
	\begin{remark}\label{rmk:high_moment_est_low_moment}
		Note that for \(k_1<k_2\),
		\[
		\bfi^{k_1}
		= i^{k_1} h^{k_1}
		\le i^{k_2} h^{k_1}
		= h^{k_1-k_2}\,\bfi^{k_2}.
		\]
		Consequently, we obtain the natural estimate
		\[
		m_{k_1}\langle f\rangle
		\le h^{\,k_1-k_2}\, m_{k_2}\langle f\rangle,
		\qquad k_1<k_2.
		\]
		
		In particular, if \(m_k\langle f\rangle<\infty\) for some \(k\ge 0\), then
		\[
		m_{k'}\langle f\rangle<\infty
		\quad \text{for all } k'\le k .
		\]
		
	\end{remark}
	Throughout this section, we assume that the initial data \(f_0:\mathbb{N}\to\mathbb{R}\) satisfies the following assumption.
	\begin{assumption}\leavevmode\label{asmp_1}
		\begin{enumerate}
			\item \(f_0(i)\ge 0\) for all \(i\in\mathbb{N}\).
			\item There exists a fixed \(\kappa>\max\{\delta,1,2-\alpha_1-\beta_1\}\) such that
			\[
			m_\kappa\langle f_0\rangle<\infty .
			\]
			By Remark~\ref{rmk:high_moment_est_low_moment}, this condition also implies that
			\(m_1\langle f_0\rangle<\infty\).
		\end{enumerate}
		
	\end{assumption}
	
	Before stating the main results of the paper, we introduce a bound function
	\(\mathcal{B}_k:[0,\infty)\to[0,\infty)\) in order to simplify the presentation.
	For \(k>1\), we define
	\[
	\mathcal{B}_k(x)
	:=\left(
	2\mathcal{C}_k\,
	x^{\,1+\frac{\delta+k-1}{\delta-2\alpha_1-\beta_1+1}
		+\frac{\delta}{k-1}}
	\right)^{\frac{k-1}{\delta+k-1}},
	\]
	where \(\mathcal{C}_k\) is a positive constant depending on
	\(\alpha_1,\beta_1,\delta,\) and \(k\), whose explicit expression is given in
	\eqref{eqn:def_for_C_k} in the proof of Proposition~\ref{prop:moment_est_for_O}.
	
	With this notation, we can now state the main results concerning solutions to
	\eqref{eqn:discrete_pde}.
	\begin{thm}[Existence and Uniqueness]\label{thm:GWP}
		For initial data \(f_0\) satisfying Assumption~\ref{asmp_1}, the discrete equation \ref{eqn:discrete_pde} admits a unique classical solution
		\[
		f(t,\cdot)\in C^1\bigl([0,\infty);\ell^1(\mathbb{N})\bigr).
		\]
		Moreover, for all \(t\in[0,\infty)\), the solution satisfies
		\[
		f(t,i)\ge 0,
		\qquad
		m_1\langle f(t)\rangle \le m_1\langle f_0\rangle,
		\]
		and
		\[
		m_{\kappa}\langle f(t)\rangle
		\le
		\max\!\left\{
		m_{\kappa}\langle f_0\rangle,
		\; 2\,\mathcal{B}_{\kappa}\!\bigl(m_1\langle f_0\rangle\bigr)
		\right\}.
		\]
		
	\end{thm}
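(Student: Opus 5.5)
We will treat \eqref{eqn:discrete_pde} as an ODE $\dot f=O[f]$ on the Banach space $\ell^1(\mathbb N)$ and use the forward-invariant-set strategy outlined in the introduction. Set $M_1=m_1\langle f_0\rangle$, $M_\kappa=\max\{m_\kappa\langle f_0\rangle,2\mathcal B_\kappa(M_1)\}$, and define
\[
\mathcal S=\Bigl\{f\in\ell^1:\ f_i\ge0,\ m_1\langle f\rangle\le M_1,\ m_\kappa\langle f\rangle\le M_\kappa\Bigr\}.
\]
This set is closed, convex and bounded in $\ell^1$. We would prove three facts about it: (a) $O$ maps $\mathcal S$ into $\ell^1$ and is Lipschitz on $\mathcal S$; (b) the tangency (subtangential) condition $\lim_{\tau\to0^+}\tau^{-1}\mathrm{dist}(f+\tau O[f],\mathcal S)=0$ holds for each $f\in\mathcal S$; (c) $O$ is continuous on $\mathcal S$. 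The classical theorem for ODEs on closed convex subsets of Banach spaces (Martin/Brezis type) then yields a unique $C^1$ solution staying in $\mathcal S$ for all $t\ge0$. This gives nonnegativity and both moment bounds, and global existence follows because $\mathcal S$ is bounded.

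\textbf{Step (a).} Every term of $O[f]$ is bilinear in $f$ with weights $K_n(\bfi,\bfj)\lesssim (\bfi\bfj)^{\alpha_n}(\bfi+\bfj)^{\beta_n}$, plus the linear term $\bfi^\delta f_i$. We would bound $\|O[f]-O[g]\|_{\ell^1}$ by sums such as $\sum_{i,j}K(\bfi,\bfj)|f_i-g_i|f_j$. Moment weights of order at most $\max(2\alpha_n+\beta_n,\delta)$ land on each factor, and since $\kappa>\delta>\alpha_n+\beta_n$ these are controlled by $m_\kappa$ of the factors. Here is the difficulty: the difference $f-g$ is measured only in $\ell^1$, so the unbounded weight $\bfi^\delta$ acting on $f-g$ is not Lipschitz in $\ell^1$.

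To handle this, we would follow the introduction and first \emph{truncate}. Replace the kernels and $\gamma$ by versions cut off at $i,j\le N$, and let $O_N$ be the resulting operator. It is globally Lipschitz on bounded sets of $\ell^1$. We then prove invariance of $\mathcal S$ for $O_N$ with bounds uniform in $N$, and pass to the limit $N\to\infty$ using the uniform $m_\kappa$ bound. That bound gives tightness, so the tails are small and the continuity in (c) can be shown with the truncation removed. Uniqueness and $\ell^1$-Lipschitz stability would come from a Gronwall estimate on the weighted difference $\sum_i \bfi|f_i-g_i|$ (or $\operatorname{sgn}(f_i-g_i)$ tested) for two solutions in $\mathcal S$. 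The coagulation structure should make the bad terms cancel, as in standard coagulation uniqueness proofs, leaving a Gronwall estimate.

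\textbf{Step (b), the main obstacle.} Tangency splits into three constraints.
\begin{itemize}
\item \emph{Nonnegativity.} If $f_i=0$ then $O^-_i[f]=0$ (every loss term carries the factor $f_i$) and $O^+_i\ge0$. Hence $f+\tau O[f]$ is within $o(\tau)$ of the cone, by the quasi-positivity argument.
\item \emph{First moment.} Test with $\bfi$. The $S^{(1)},S^{(2)},S^{(3)}$ terms should satisfy $\sum_i\bfi\,S_i[f]\le0$ (the symmetrization identities make the gain and loss cancel up to a nonpositive remainder), while $-\sum\bfi^{1+\delta}f_i\le0$. So $m_1$ is nonincreasing.
\item \emph{Barrier for $m_\kappa$.} This is the hard part. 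Test with $\bfi^\kappa$ and use $(\bfi+\bfj)^\kappa-\bfi^\kappa-\bfj^\kappa\lesssim_\kappa \bfi^{\kappa-1}\bfj+\bfi\bfj^{\kappa-1}$. The coagulation gain then contributes at most $\mathcal C\sum K_1\,\bfi^{\kappa-1}\bfj f_if_j$, and the $S^{(2)},S^{(3)}$ terms (fragmentation-type, by convexity of $x^\kappa$) contribute nonpositively. The leftover is bounded by $m_{\kappa-1+2\alpha_1+\beta_1}$ times a power of $m_1$. Interpolating between $m_1$ and $m_{\kappa+\delta}$ (Hölder), and using $\delta>2\alpha_1+\beta_1-1$, we get
\[
\frac{d}{dt}m_\kappa\le \mathcal C_\kappa M_1^{a}\,m_{\kappa+\delta}^{\theta}-m_{\kappa+\delta},\qquad \theta<1,
\]
and $m_{\kappa+\delta}\gtrsim m_\kappa^{(\delta+\kappa-1)/(\kappa-1)}M_1^{-\delta/(\kappa-1)}$. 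Consequently the right side is negative whenever $m_\kappa\ge 2\mathcal B_\kappa(M_1)$, which is presumably exactly how $\mathcal B_\kappa$ was defined.
\end{itemize}

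Applying this at the boundary $m_\kappa=M_\kappa$ shows that an Euler step does not increase $m_\kappa$ to first order. A technical point remains: $m_{\kappa+\delta}$ could be infinite for $f\in\mathcal S$. This is handled on the truncated system, where all moments are finite, and is the reason the truncation is introduced before tangency is verified.
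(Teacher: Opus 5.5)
\textbf{The gap: uniqueness and your step (a).} Your set $\mathcal S$, the tangency checks, and the superlinear barrier for $m_\kappa$ are essentially what the paper does. (The paper verifies tangency by exhibiting a single $w_N=f+\tau O[f_N]$, with $f$ truncated rather than the equation, lying in all three constraints at once. Tangency to each constraint separately does not in general give tangency to their intersection in $\ell^1$.)

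You correctly see that $O$ is not Lipschitz on $\mathcal S$, but your replacement fails. You want uniqueness from a Gronwall estimate in which ``the coagulation structure makes the bad terms cancel.'' Nothing cancels. Testing against $\operatorname{sgn}(f_i-g_i)$ through the weak formulation, even after symmetrization, leaves terms like
\[
\sum_{i,j}K_1(\bfi,\bfj)\,|f_i-g_i|\,(f_j+g_j)\lesssim m_{\alpha_1+\beta_1}\langle |f-g|\rangle\, m_{\alpha_1}\langle f+g\rangle+m_{\alpha_1}\langle |f-g|\rangle\, m_{\alpha_1+\beta_1}\langle f+g\rangle ,
\]
plus analogous terms with $m_{\alpha_n+\beta_n}\langle |f-g|\rangle$ for $n=2,3$. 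On $\mathcal S$ these are in general only bounded by $\|f-g\|_{\ell^1}^{1-(\alpha_n+\beta_n)/\kappa}$. That is a H\"older modulus, for which Gronwall gives no uniqueness. With the weight $\bfi$ it is worse: you also need $m_{\alpha_1+\beta_1+1}\langle f\rangle$, which is not controlled on $\mathcal S$ when $\kappa<\alpha_1+\beta_1+1$, and the assumptions allow this.

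\textbf{The missing idea: the viscous term closes the estimate.} One has $\sum_i (V_i[f]-V_i[g])\operatorname{sgn}(f_i-g_i)=m_\delta\langle |f-g|\rangle$, so
\[
\frac{d}{dt}\|f-g\|_{\ell^1}\le \|S[f]-S[g]\|_{\ell^1}-m_\delta\langle |f-g|\rangle .
\]
Since $\delta>\alpha_n+\beta_n$, you can interpolate
\[
m_{\alpha_n+\beta_n}\langle |f-g|\rangle\le C_\varepsilon\|f-g\|_{\ell^1}+\varepsilon\, m_\delta\langle |f-g|\rangle ,
\]
which is \eqref{eqn:Holder_estimate_pre_for_S}. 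The damping absorbs the bad part and yields $\frac{d}{dt}\|f-g\|_{\ell^1}\le C(\mathfrak m_1,\mathfrak m_\kappa)\|f-g\|_{\ell^1}$.

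This one-sided Lipschitz (dissipativity) bound is exactly the hypothesis that should replace your (a) in the Martin-type theorem. Together with H\"older continuity of $O$ on $\mathcal S$ and tangency, it makes the $\varepsilon$-approximate polygonal solutions Cauchy in $C([0,T];\ell^1)$. Existence then needs neither a truncated equation nor compactness, and uniqueness follows from the same inequality.

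Your truncation-plus-compactness route can still give existence, but only with a truncation that preserves the barrier. For example, truncate the data as well: switching off $\gamma$ above $N$ while mass remains there can destroy the $m_\kappa$ barrier. Even then, it does not give uniqueness without the damping argument above.
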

	
	\begin{thm}[Lipschitz Continuity]\label{thm:Lipschitz_dependence}
		Suppose that \(f\) and \(g\) are two solutions of \eqref{eqn:discrete_pde} obtained in
		Theorem~\ref{thm:GWP}, with initial data \(f_0\) and \(g_0\) satisfying
		Assumption~\ref{asmp_1}. Then, for all \(t\in[0,\infty)\),
		\[
		\|f(t)-g(t)\|_{\ell^1}
		\le
		\mathrm e^{Ct}\,\|f_0-g_0\|_{\ell^1},
		\]
		where the constant \(C>0\) depends only on
		\[
		\max\!\bigl\{m_1\langle f_0\rangle,\; m_1\langle g_0\rangle\bigr\}
		\quad\text{and}\quad
		\max\!\bigl\{m_{\kappa}\langle f_0\rangle,\; m_{\kappa}\langle g_0\rangle\bigr\}.
		\]
		
	\end{thm}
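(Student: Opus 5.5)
We argue by a Gr\"onwall estimate on $w(t):=f(t)-g(t)$ in $\ell^1$. By Theorem~\ref{thm:GWP} both $f$ and $g$ lie in $C^1([0,\infty);\ell^1(\mathbb{N}))$, are nonnegative, and for all $t\ge 0$ satisfy
\[
m_1\langle f(t)\rangle,\ m_1\langle g(t)\rangle\le M_1:=\max\{m_1\langle f_0\rangle,m_1\langle g_0\rangle\},
\]
\[
m_\kappa\langle f(t)\rangle,\ m_\kappa\langle g(t)\rangle\le M_\kappa:=\max\{m_\kappa\langle f_0\rangle,m_\kappa\langle g_0\rangle,2\mathcal{B}_\kappa(M_1)\}.
\]
Since $\mathcal{B}_\kappa$ is increasing, $M_\kappa$ depends only on the two maxima in the statement. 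Each component satisfies $\frac{d}{dt}w_i=O_i[f]-O_i[g]$. We multiply by $\operatorname{sgn}(w_i)$ and use $\frac{d}{dt}|w_i|\le \operatorname{sgn}(w_i)\frac{d}{dt}w_i$, which holds in the sense of Dini derivatives or after integrating in time. Summing over $i$ gives
\[
\|w(t)\|_{\ell^1}\le\|w(0)\|_{\ell^1}+\int_0^t\sum_i \operatorname{sgn}(w_i)\bigl(O_i[f]-O_i[g]\bigr)\,ds .
\]
Exchanging sum and integral is justified because all the series converge absolutely, thanks to the moment bounds.

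Each quadratic term is bilinear, so we write $f_af_b-g_ag_b=w_af_b+g_aw_b$. The terms of type $K(\cdot,\cdot)f_if_j$ then become $K\,(w_if_j+g_iw_j)$. For the first piece we keep the sign structure where it helps, and otherwise bound
\[
\sum_{i,j}K_n(\bfi,\bfj)\,|w_i|\,f_j\lesssim \sum_{i,j}|w_i|\,\bfi^{\alpha_n+\beta_n}\,\bfj^{\alpha_n+\beta_n}f_j
\]
by submultiplicativity, $(\bfi+\bfj)^{\beta}\le 2^{\beta}\max(\bfi,\bfj)^{\beta}\le 2^\beta \bfi^\beta\bfj^\beta$ (using $\bfi,\bfj\ge h$, with constants depending on $h$). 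The weight $\bfj^{\alpha+\beta}$ paired with $f_j$ is absorbed by $m_{\alpha+\beta}\le h^{\cdot}m_\kappa$ via Remark~\ref{rmk:high_moment_est_low_moment}, provided $\alpha_n+\beta_n\le\kappa$. The dangerous weight is $\bfi^{\alpha_n+\beta_n}$ sitting on $|w_i|$, since $\|w\|_{\ell^1}$ does not control it.

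The main obstacle is this growing factor on $|w_i|$. We would handle it with the dissipation. The linear term contributes $-\sum_i\gamma(\bfi)|w_i|$ exactly, with a good sign. The loss terms of the form $K(\bfi,\bfj)w_if_j$ also contribute $-\sum K|w_i|f_j\le 0$ after multiplication by $\operatorname{sgn}(w_i)$. The remaining bad terms are then bounded by
\[
C(M_1,M_\kappa)\sum_i \bigl(1+\bfi^{\max_n(\alpha_n+\beta_n)}\bigr)|w_i|,
\]
and for large $i$ we use $\delta>\max_n(\alpha_n+\beta_n)$ from \eqref{asp:para}. This gives
\[
C\,\bfi^{\alpha+\beta}\le \tfrac12\bfi^\delta+C'\quad\text{for all }i ,
\]
so the bad terms are absorbed by $\sum_i\bfi^\delta|w_i|$, leaving $C'\|w\|_{\ell^1}$.

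The gain terms where the $w$ index is shifted, such as $\sum_i\sum_{j>i}K(\bfi,\bfj-\bfi)\,w_jf_{j-i}$ or the convolution $\sum_{j<i}K(\bfj,\bfi-\bfj)w_jf_{i-j}$, are handled by reindexing so that $|w_j|$ carries weight at most $\bfj^{\alpha+\beta}$. For the convolution, summing over $i$ gives $\sum_{j,l}K(\bfj,\mathbf{l})|w_j|f_l$, which is treated as above. For the $S^{(2)},S^{(3)}$ gains, summing over $i<j$ gives at most $\sum_j|w_j|\,\bfj^{\alpha+\beta}\sum_{l}\mathbf{l}^{\alpha}f_l$. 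This is again absorbed.

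The resulting estimate is $\|w(t)\|\le\|w(0)\|+C\int_0^t\|w(s)\|\,ds$, and Gr\"onwall yields the claim with $C$ depending only on $M_1$ and $M_\kappa$. We must also check that the constant $C'$ depends only on $h$ and the parameters besides $M_1,M_\kappa$; dependence on $h$ is presumably tacitly allowed in the statement.
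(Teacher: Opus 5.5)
Your proposal is correct and is essentially the paper's proof. The paper also differentiates $\|f-g\|_{\ell^1}$ against $\operatorname{sgn}(f_i-g_i)$ and bounds the $S$-part by $\|S[f]-S[g]\|_{\ell^1}\le C\|f-g\|_{\ell^1}+m_\delta\langle|f-g|\rangle$. That bound comes from the same bilinear splitting together with the interpolation $m_{\alpha_n+\beta_n}\langle|f-g|\rangle\le C_\varepsilon m_0\langle|f-g|\rangle+\varepsilon\, m_\delta\langle|f-g|\rangle$, which is your pointwise Young inequality under $\delta>\max_n(\alpha_n+\beta_n)$. The paper then cancels the $m_\delta$ term against the dissipation $-\sum_i\bfi^\delta|f_i-g_i|$ and concludes with Gr\"onwall.

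Your differences are minor refinements. You use the integrated form in place of a pointwise derivative of the norm, and you keep the favorable sign of the loss terms, which the paper does not need. The implicit dependence of $C$ on $h$ that you flag is also present in the paper's constants.
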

	\begin{thm}\label{thm:energy_decay}
		Let \(f(t)\) be the solution to \eqref{eqn:discrete_pde} with initial data \(f_0\) satisfying
		Assumption~\ref{asmp_1}. Suppose that the discretization mesh size is \(h>0\).
		Then, for all \(t\ge 0\),
		\[
		m_1\langle f(t)\rangle
		\le
		\mathrm e^{-h^{\delta} t}\, m_1\langle f_0\rangle .
		\]
		
	\end{thm}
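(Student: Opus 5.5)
The plan is to derive a differential inequality for the first moment, $\frac{d}{dt} m_1\langle f(t)\rangle \le -h^{\delta} m_1\langle f(t)\rangle$, and then conclude with Grönwall's lemma. The key structural fact is that the interaction part $S[f]$ conserves the first moment. This is the discrete analogue of energy conservation for coagulation--fragmentation: each triadic exchange $(\mathbf{j},\mathbf{i}-\mathbf{j})\leftrightarrow \mathbf{i}$ preserves $\bfi$. Hence only the viscous term $V[f]$ changes $m_1$.

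\textbf{Step 1: conservation of $m_1$ by $S$.} For nonnegative $f$ with $m_\kappa\langle f\rangle<\infty$, I will check that $\sum_i \bfi\, S_i^{(n)}[f]=0$ for $n=1,2,3$.

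For $S^{(1)}$, write the gain term as a double sum over pairs $(j,l)$ with $i=j+l$. This gives $\sum_{j,l} K_1(\bfj,\mathbf{l})(\bfj+\mathbf{l}) f_jf_l$. The loss term is $2\sum_{i,j}\bfi K_1(\bfi,\bfj)f_if_j$, which by the symmetry of $K_1$ equals the same quantity. So the two cancel.

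For $S^{(2)}$ (and identically for $S^{(3)}$), substitute $l=j-i$ in the two positive sums. Their weights $\bfi$ and $\mathbf{l}$ combine into $(\bfi+\mathbf{l})K_2(\bfi,\mathbf{l})f_{i+l}f_l$ summed over $i,l\ge1$. After relabeling, this is exactly $\sum_{m}\mathbf{m} f_m\sum_{j<m}K_2(\mathbf{m}-\bfj,\bfj)f_j$, the weighted loss term, using the symmetry $K_2(\bfj-\bfi,\bfi)=K_2(\bfi,\bfj-\bfi)$.

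All rearrangements need absolute convergence. This follows because every summand is bounded by $C\,\bfi^{a}\bfj^{b}f_if_j$ with $a+b\le 1+2\alpha_n+\beta_n$. By assumption \eqref{asp:para} the exponents are dominated by $\kappa$ and $1$ (or by $\delta+1\le \kappa+1$ type combinations), and these are finite thanks to the uniform bound on $m_\kappa\langle f(t)\rangle$ from Theorem~\ref{thm:GWP}. This justification is the main obstacle. If the direct exponent bookkeeping does not close, I would instead truncate: conserve $m_1$ for the system restricted to indices $\le N$, which is a finite sum, and pass to the limit using the $m_\kappa$ bound, where the tail contributions are $O(N^{1+\max(\alpha_n+\beta_n)-\kappa})$.

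\textbf{Step 2: differentiate $m_1$.} Since $f\in C^1([0,\infty);\ell^1)$, I cannot differentiate $m_1$ term by term directly, because $\bfi$ is unbounded. I will therefore work in integrated form. For each $N$, $\sum_{i\le N}\bfi f_i(t)$ is $C^1$ with derivative $\sum_{i\le N}\bfi\,O_i[f]$. Integrate this on $[s,t]$ and let $N\to\infty$, using dominated convergence with the same moment bounds as in Step~1. Combined with Step~1, this gives
\[
m_1\langle f(t)\rangle-m_1\langle f(s)\rangle=-\int_s^t\sum_i \gamma(\bfi)\,\bfi\, f_i(\tau)\,\mathrm d\tau .
\]
The right side is finite because $m_{1+\delta}\le C m_\kappa$ when $\kappa\ge1+\delta$. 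Otherwise I will again use the truncation route: the partial sums of the dissipation are monotone, and Fatou's lemma gives the needed inequality direction.

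\textbf{Step 3: lower bound and Grönwall.} Since $i\ge1$, we have $\gamma(\bfi)=(ih)^\delta\ge h^\delta$. Hence $\sum_i\gamma(\bfi)\bfi f_i\ge h^\delta m_1\langle f\rangle$, using $f\ge0$ from Theorem~\ref{thm:GWP}. This yields
\[
m_1\langle f(t)\rangle\le m_1\langle f(s)\rangle-h^\delta\int_s^t m_1\langle f(\tau)\rangle\,\mathrm d\tau .
\]
Since $m_1\langle f(\cdot)\rangle$ is continuous, the integral form of Grönwall's lemma (or comparing with $\mathrm e^{h^\delta t}m_1$) gives $m_1\langle f(t)\rangle\le \mathrm e^{-h^\delta t}m_1\langle f_0\rangle$. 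Continuity of $m_1\langle f(\cdot)\rangle$ itself follows from the identity in Step~2.
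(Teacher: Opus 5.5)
There is a genuine gap in the justification of Steps 1--2, and your fallback does not close it.

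\textbf{Where the direct route fails.} To use the weak formulation with $\varphi(i)=\bfi$, you need absolute convergence of $\sum_{i,j}(K_2+K_3)(\bfi,\bfj)f_if_{i+j}(\bfi+\bfj)$. Write $m=i+j$. For pairs with $i$ small, the fragment $m-i$ has size $\approx\mathbf m$, so this sum is of order $m_{\alpha_n}\langle f\rangle\,m_{1+\alpha_n+\beta_n}\langle f\rangle$. The same holds for $K_1$.

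Assumption~\ref{asmp_1} only gives $\kappa>\max\{\delta,1,2-\alpha_1-\beta_1\}$, and this does not imply $\kappa\ge 1+\alpha_n+\beta_n$. For instance, $\alpha_n=\beta_n=\tfrac12$, $\delta=1.1$, $\kappa=1.2$ satisfies \eqref{asp:para} and Assumption~\ref{asmp_1}. Yet you would need $m_2$, and $m_{2.1}$ for the dissipation.

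\textbf{Why truncating the sum does not help.} With $\varphi_N(i)=\bfi\,\mathbf 1_{\{i\le N\}}$ one gets
$\sum_{i\le N}\bfi S_i[f]=-\sum_{i+j>N}K_1(\bfi,\bfj)f_if_j(\varphi_N(i)+\varphi_N(j))+\sum_{i+j>N}(K_2+K_3)(\bfi,\bfj)f_if_{i+j}(\varphi_N(i)+\varphi_N(j))$.
The coagulation part has the favorable sign. The fragmentation part is nonnegative, so it must actually tend to zero, and that again requires $m_{1+\alpha_n+\beta_n}$. Your own tail rate $O(N^{1+\max(\alpha_n+\beta_n)-\kappa})$ fails to decay in exactly the regime where the fallback is needed. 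Monotonicity and Fatou only rescue the dissipation term, not this one.

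\textbf{The missing idea.} Truncate the initial datum rather than the sum, which is what the paper does.
\begin{enumerate}
\item The datum $f_0^{(N)}:=f_0\mathbf 1_{\{i\le N\}}$ has all moments finite. By Theorem~\ref{thm:GWP} (applied with arbitrarily large $\kappa$, together with uniqueness), the solution $f^{(N)}$ has all moments bounded uniformly in time.
\item For $f^{(N)}$ your Steps 1--3 are fully rigorous and give $m_1\langle f^{(N)}(t)\rangle\le e^{-h^\delta t}m_1\langle f_0\rangle$.
\item Since $f_0^{(N)}\to f_0$ in $\ell^1$, Theorem~\ref{thm:Lipschitz_dependence} gives $f^{(N)}(t)\to f(t)$ in $\ell^1$. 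Fatou's lemma then transfers the bound to $f(t)$.
\end{enumerate}

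Alternatively, you can repair your direct route. By Theorem~\ref{thm:creation_of_polynomial_moments}, whose proof does not use energy decay, all moments of $f(\tau)$ are bounded for $\tau\ge s>0$. So your computation holds on $[s,t]$. Then let $s\downarrow 0$, using continuity of $m_1\langle f(\cdot)\rangle$ at $0$. That continuity follows from $f\in C([0,\infty);\ell^1)$ and the uniform $m_\kappa$ bound with $\kappa>1$, which controls the tails $\sum_{i>M}\bfi f_i$.
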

	\begin{remark}
		It implies that, the solution is converging to 0 pointwisely in long time.
	\end{remark}
	\begin{thm}[Creation and Propagation of Positivity]\label{thm:creation_of_positivity}
		Let \(f(t)\) be the solution to \eqref{eqn:discrete_pde} with initial data \(f_0\)
		satisfying Assumption~\ref{asmp_1}. Define the index set
		\[
		I:=\{\,i\in\mathbb{N} : f_0(i)>0\,\},
		\]
		and let \(\gcd(I)\) denote the greatest common divisor of the elements of \(I\).
		Then, for all \(t>0\),
		\[
		\begin{cases}
			f(t,i)>0, & i\in \gcd(I)\,\mathbb{N},\\[0.3em]
			f(t,i)=0, & i\notin \gcd(I)\,\mathbb{N}.
		\end{cases}
		\]
		
	\end{thm}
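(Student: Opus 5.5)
Let $d:=\gcd(I)$. I will treat the two halves separately: first that $f$ vanishes off $d\mathbb{N}$, then that it is positive on $d\mathbb{N}$. If $I=\emptyset$ the solution is identically zero by uniqueness and there is nothing to prove, so assume $I\neq\emptyset$.

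\emph{Vanishing off $d\mathbb{N}$.} I will use the uniqueness part of Theorem~\ref{thm:GWP}. Consider the restricted system on the sublattice $d\mathbb{N}$, i.e.\ equation \eqref{eqn:discrete_pde} for sequences supported in $d\mathbb{N}$. The key structural observation is that every gain or loss term in $O[f]$ at index $i\in d\mathbb{N}$ only involves indices $j$, $i-j$, $j-i$. If all of these must lie in $d\mathbb{N}$ for a term to be nonzero, the set of sequences supported in $d\mathbb{N}$ is invariant. Conversely, at $i\notin d\mathbb{N}$, each gain term $f_jf_{i-j}$, $f_if_j$, $f_jf_{j-i}$ vanishes when $f$ is supported in $d\mathbb{N}$: it would need two indices in $d\mathbb{N}$ whose sum or difference is $i$, or would contain the factor $f_i$. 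The loss terms carry the factor $f_i=0$. Hence $O[f]_i=0$ off $d\mathbb{N}$. Solving the equation on $d\mathbb{N}$ (the same existence argument applies, or equivalently one rescales $h\mapsto dh$, which keeps the kernel structure) and extending by zero produces a solution of the full system. By uniqueness it equals $f$, so $f(t,i)=0$ for $i\notin d\mathbb{N}$.

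\emph{Positivity on $d\mathbb{N}$.} This is done in three steps.
\begin{enumerate}
\item \textbf{Persistence.} Write $\partial_t f_i\ge -L_i(t) f_i$, where $L_i(t)=O^-[f]_i/f_i=2\sum_j K_1(\bfi,\bfj)f_j+\dots+\gamma(\bfi)$. This is locally bounded in $t$ because $m_1$ and $m_\kappa$ are bounded (Theorem~\ref{thm:GWP}) and $\kappa$ is large enough that $\sum_j K_1(\bfi,\bfj)f_j\lesssim_i m_{\alpha_1+\beta_1}$ is finite. Since $O^+\ge 0$, we get $f_i(t)\ge f_i(s)e^{-\int_s^t L_i}$, so positivity at some time persists at all later times.
\item \textbf{Creation of sums and differences.} If $f_a(s)>0$ and $f_b(s)>0$ for all $s$ in some interval, then for $t$ slightly beyond, the Duhamel formula $f_{c}(t)\ge\int_s^t e^{-\int_\tau^t L_c}O^+_c(\tau)\,d\tau$ gives positivity at $c=a+b$, using the term $K_1(\bfa,\bfb)f_af_b$. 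It also gives positivity at $c=|a-b|$, using the term $K_2(\bfi,\bfj-\bfi)f_jf_{j-i}$ with $i=|a-b|$, $j=\max(a,b)$; the kernels are strictly positive at positive arguments. Repeating this on nested intervals $(0,\varepsilon/2^n)$ shows that for every $t>0$, $f(t,\cdot)>0$ on the smallest set $R\supseteq I$ closed under sums and positive differences.
\item \textbf{Number theory.} Show $R=d\mathbb{N}$. Closure under differences runs the Euclidean algorithm, so $\gcd(a,b)\in R$ for $a,b\in R$. By induction on finite subsets of $I$, whose gcd stabilizes at $d$, we get $d\in R$, and closure under sums then gives $d\mathbb{N}\subseteq R$.
\end{enumerate}

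The main obstacle I expect is the rigorous handling of the infinite sums. In the first half, one must make sure that the restricted system is well posed under the same hypotheses. In the second half, one must justify the Duhamel lower bound, i.e.\ local boundedness of $L_i$ and continuity of $O^+_c$ in time. I will obtain both from $f\in C^1([0,\infty);\ell^1)$ together with the uniform $m_\kappa$ bound, since $\kappa>\alpha_1+\beta_1$ and $\kappa>\alpha_n+\beta_n$ for each $n$.
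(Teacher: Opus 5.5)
Your proposal is correct and follows essentially the paper's proof. Both arguments use persistence from $\partial_t f_i\ge -C_i f_i$, creation at $a+b$ and $|a-b|$ via the $K_1$ and $K_2$ gain terms, the gcd/Euclidean step giving $d\mathbb{N}$, and vanishing off $d\mathbb{N}$ by solving the restricted system (your rescaling $h\mapsto dh$ is the paper's rescaled system with the constants absorbed), extending by zero and invoking uniqueness. Your Duhamel bound restarted at an arbitrary $s>0$ spells out the induction over $I_n$, which the paper only sketches.
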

	
	\begin{cor}[Propagation of Polynomial Moments]
		\label{Corr:Poly}	Let \(f(t)\) be the solution to \eqref{eqn:discrete_pde} with initial data \(f_0\)
		satisfying Assumption~\ref{asmp_1}. Suppose that, for a given \(k>\max\{\delta,1,2-\alpha_1-\beta_1\}\),
		\[
		m_k\langle f_0\rangle<\infty .
		\]
		Then, for all \(t\in[0,\infty)\),
		\[
		m_k\langle f(t)\rangle
		\le
		\max\!\left\{
		m_k\langle f_0\rangle,\;
		2\,\mathcal{B}_k\!\bigl(m_1\langle f_0\rangle\bigr)
		\right\}.
		\]
		
	\end{cor}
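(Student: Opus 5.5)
Corollary~\ref{Corr:Poly} follows almost directly from Theorem~\ref{thm:GWP}, applied with the exponent $k$ in place of $\kappa$. The plan is as follows.

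\emph{Admissibility of $f_0$ with exponent $k$.} The hypothesis $k>\max\{\delta,1,2-\alpha_1-\beta_1\}$ and $m_k\langle f_0\rangle<\infty$ says exactly that $f_0$ satisfies Assumption~\ref{asmp_1} with $\kappa$ replaced by $k$. Theorem~\ref{thm:GWP} then yields a classical solution $\tilde f\in C^1([0,\infty);\ell^1(\mathbb{N}))$ with
\[
m_k\langle \tilde f(t)\rangle\le \max\{m_k\langle f_0\rangle,\,2\mathcal{B}_k(m_1\langle f_0\rangle)\}.
\]

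\emph{Identification with $f$.} The solution $f$ from the statement is the one built with the original $\kappa$. Both $f$ and $\tilde f$ are nonnegative classical $\ell^1$ solutions with the same initial data and bounded $m_1$. We identify them using the uniqueness and Lipschitz estimate of Theorem~\ref{thm:Lipschitz_dependence}, which needs both solutions to have a finite moment of order $\kappa'$ for one common admissible exponent. Take $\kappa'=\min\{\kappa,k\}$, which still exceeds $\max\{\delta,1,2-\alpha_1-\beta_1\}$. By Remark~\ref{rmk:high_moment_est_low_moment}, $m_{\kappa'}\le h^{\kappa'-\kappa}m_\kappa$ and $m_{\kappa'}\le h^{\kappa'-k}m_k$, so both $f$ and $\tilde f$ have $m_{\kappa'}$ bounded uniformly in time. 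Both are therefore solutions in the class of Theorem~\ref{thm:GWP} for the exponent $\kappa'$. Applying Theorem~\ref{thm:Lipschitz_dependence} with $g_0=f_0$ gives $\|f(t)-\tilde f(t)\|_{\ell^1}\le e^{Ct}\cdot 0$, hence $f\equiv\tilde f$, and the bound transfers to $f$.

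\emph{Main obstacle.} The delicate point is checking that the uniqueness and Lipschitz statements really apply to any classical solution with the relevant moment bounds, not only to the particular solutions constructed in the existence proof. If they do not, I would instead rerun the forward-invariance argument of Theorem~\ref{thm:GWP}, which underlies Proposition~\ref{prop:moment_est_for_O}. The region to use is
\[
\{g\ge0,\ m_1\langle g\rangle\le m_1\langle f_0\rangle,\ m_k\langle g\rangle\le M\},\qquad M=\max\{m_k\langle f_0\rangle,2\mathcal{B}_k(m_1\langle f_0\rangle)\}.
\]
The barrier inequality from that proposition, with $k$ in place of $\kappa$, makes $m_k$ decrease whenever it exceeds $\mathcal{B}_k$. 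By the tangency argument, this region is forward invariant, so the solution starting from $f_0$ stays in it. Uniqueness in $\ell^1$ (from the local Lipschitz continuity of $O$ on bounded-moment sets) then identifies this solution with $f$.
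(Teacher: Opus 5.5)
Your proposal is correct and follows the paper's route. The paper's proof is the single line that the corollary follows from Theorem~\ref{thm:GWP} applied with $k$ in place of $\kappa$, and your identification of the two solutions through Theorem~\ref{thm:Lipschitz_dependence}, using the common exponent $\min\{\kappa,k\}$, makes explicit the uniqueness step the paper leaves implicit. The fallback is unnecessary. Its justification is also slightly off: $O$ is only H\"older continuous, not locally Lipschitz, on bounded-moment sets, and uniqueness actually comes from the one-sided estimate in which the dissipation $-V$ absorbs the term $m_\delta\langle |f-g|\rangle$.
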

	\begin{proof}
		This follows directly from Theorem \ref{thm:GWP}.
	\end{proof}
	\begin{thm}[Creation of Polynomial Moments]\label{thm:creation_of_polynomial_moments}
		Let \(f(t)\) be the solution to \eqref{eqn:discrete_pde} with initial data \(f_0\)
		satisfying Assumption~\ref{asmp_1}. Then, for any \(k>1\) and for all \(t>0\),
		\begin{equation}\label{eqn:creation_of_polynomial_moments}
			m_k\langle f(t)\rangle
			\le
			\left(\frac{2(k-1)}{\delta\, t}\right)^{\frac{k-1}{\delta}}
			m_1\langle f_0\rangle
			+ \mathcal{B}_k\!\bigl(m_1\langle f_0\rangle\bigr).
		\end{equation}
		
	\end{thm}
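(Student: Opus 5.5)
The plan is to derive a differential inequality for $m_k\langle f(t)\rangle$ and compare it with a Bernoulli-type ODE whose solution gives the stated bound. By truncation, reduce to initial data $f_0$ with finite support (or with $m_{k'}\langle f_0\rangle<\infty$ for large $k'$). For such data Corollary~\ref{Corr:Poly} ensures $m_k\langle f(t)\rangle$ is finite and bounded on $[0,\infty)$, so $t\mapsto m_k\langle f(t)\rangle$ is $C^1$ and we may differentiate. To pass to general $f_0$ satisfying Assumption~\ref{asmp_1}, approximate by truncations $f_0^N=f_0\mathbf 1_{\{i\le N\}}$. Theorem~\ref{thm:Lipschitz_dependence} gives $f^N(t)\to f(t)$ in $\ell^1$, hence pointwise. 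Since $m_1\langle f_0^N\rangle\le m_1\langle f_0\rangle$ and $\mathcal B_k$ is increasing, Fatou's lemma (lower semicontinuity of $m_k$) transfers the bound \eqref{eqn:creation_of_polynomial_moments}. For $1<k\le$ the threshold of the Corollary, I would apply the estimate to a larger moment and interpolate, or simply note that the derivative computation below does not need the threshold once finiteness is known.

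The core step is the moment identity. Test \eqref{eqn:discrete_pde} against $\bfi^{k}$ and regroup each collision sum over interacting triples $(\bfj,\bfl,\bfj+\bfl)$. For $S^{(2)},S^{(3)}$ the triple contributions have the form $K(\bfj,\bfl)f_{j+l}f_l\big(\bfj^k+\bfl^k-(\bfj+\bfl)^k\big)\le 0$ for $k\ge1$, so these are nonpositive. For $S^{(1)}$ the contribution is $\tfrac12\sum K_1(\bfj,\bfl)f_jf_l\big((\bfj+\bfl)^k-2\bfj^k-2\bfl^k\big)$ plus diagonal/boundary terms. This can be positive, but is bounded by $C_k\sum K_1 f_jf_l(\bfj^{k-1}\bfl+\bfj\bfl^{k-1})$. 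I expect this is exactly what Proposition~\ref{prop:moment_est_for_O} (defining $\mathcal C_k$) furnishes, in the form
\[
\frac{d}{dt}m_k\langle f\rangle\le \mathcal{C}_k\, m_1^{a}\, m_k^{b}-m_{k+\delta}\langle f\rangle ,
\]
with $b<1+\tfrac{\delta}{k-1}$ using $2\alpha_1+\beta_1-1<\delta$. The damping comes from $V$. By Hölder/Jensen interpolation between $m_1$ and $m_{k+\delta}$, we have $m_{k+\delta}\ge m_k^{1+\frac{\delta}{k-1}}\, m_1^{-\frac{\delta}{k-1}}$. Young's inequality then absorbs the production term into half of the damping, leaving
\[
\frac{d}{dt}m_k\le -\tfrac12\, m_1^{-\frac{\delta}{k-1}}\big(m_k^{1+\frac{\delta}{k-1}}-\mathcal B_k(m_1)^{1+\frac{\delta}{k-1}}\big).
\]
Here the exponents in $\mathcal B_k$ are those produced by the Young step. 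This is the main obstacle: the exponent bookkeeping in Proposition~\ref{prop:moment_est_for_O} must reproduce precisely $\mathcal B_k$, and the nonmonotone $S^{(1)}$ term must be controlled by $m_1$ and $m_k$ only.

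Finally, compare with an ODE. Use $m_1\langle f(t)\rangle\le m_1\langle f_0\rangle=:M$ (Theorem~\ref{thm:GWP}), so that $\mathcal B_k(m_1(t))\le\mathcal B_k(M)$ and $m_1(t)^{-\delta/(k-1)}\ge M^{-\delta/(k-1)}$. Let $u=(m_k-\mathcal B_k(M))_+$. Wherever $u>0$, $m_k^{p}-\mathcal B^p\ge u^p$ with $p=1+\frac{\delta}{k-1}$, so $u'\le -\tfrac12 M^{-\frac{\delta}{k-1}}u^{p}$. Integrating this Bernoulli inequality from $u(0)\le\infty$ gives
\[
u(t)\le\Big(\tfrac{2(k-1)}{\delta t}\Big)^{\frac{k-1}{\delta}}M .
\]
This bound is independent of $u(0)$, which is what makes the creation statement hold for every $t>0$. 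Hence $m_k\le u+\mathcal B_k(M)$, which is \eqref{eqn:creation_of_polynomial_moments}.
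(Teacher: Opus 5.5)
Your proposal is correct and follows the paper's proof. You truncate to finitely supported data, combine Proposition~\ref{prop:moment_est_for_O} with the lower bound $m_{\delta+k}\ge m_1^{-\delta/(k-1)}m_k^{(\delta+k-1)/(k-1)}$ (i.e.\ \eqref{eqn:moment_estimate_m_O_when_zero}) and $m_1\langle f(t)\rangle\le m_1\langle f_0\rangle$ to get the Bernoulli-type inequality, and compare with the ODE. Your $(m_k-\mathcal{B}_k)_+$ argument is equivalent to the paper's explicit supersolution $Y(t)$; both rest on superadditivity of $x\mapsto x^{(\delta+k-1)/(k-1)}$. You then remove the truncation via Theorem~\ref{thm:Lipschitz_dependence} and Fatou's lemma, as the paper does.

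One correction to your aside about the threshold. Only the $k>\delta$ part of the Corollary's threshold is irrelevant for the derivative computation. The condition $k>2-\alpha_1-\beta_1$ is genuinely used in Proposition~\ref{prop:moment_est_for_O} to interpolate $m_{\alpha_1+\beta_1+k-1}$ between $m_1$ and $m_{\delta+k}$. So for $1<k\le 2-\alpha_1-\beta_1$, neither your argument nor the paper's (which has the same restriction) gives the stated bound.
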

	
	\begin{definition}[Mittag-Leffler Moments]
		
		For \(a\in[1,\infty)\), define the function
		\[
		\mathcal{E}_a(x)
		:= \sum_{k=1}^{\infty}\frac{x^k}{\Gamma(ak+1)} .
		\]
		
		The Mittag--Leffler moment of a function \(f:\mathbb{N}\to\mathbb{R}_{\ge 0}\)
		with parameters \(a\ge 1\) and \(\lambda>0\) is then defined by
		\[
		\mathcal{E}_a^\infty(\lambda)\langle f\rangle
		:= \sum_{i=1}^{\infty} f_i\,\mathcal{E}_a\!\left(\lambda^{a}\bfi\right)
		= \sum_{k=1}^{\infty}
		\frac{m_k\langle f\rangle\,\lambda^{ak}}{\Gamma(ak+1)} .
		\]
		
	\end{definition}
	\begin{remark}\leavevmode\label{rmk:estimate_for_m_l_function}
		\begin{enumerate}
			\item There exist positive constants \(c_a\) and \(C_a\) such that
			\[
			c_a\bigl(\mathrm e^{x^{1/a}}-1\bigr)
			\le \mathcal{E}_a(x)
			\le C_a\bigl(\mathrm e^{x^{1/a}}-1\bigr),
			\qquad x\ge 0 .
			\]
			Consequently,
			\[
			\mathcal{E}_a^\infty(\lambda)\langle f\rangle
			= \sum_{i=1}^{\infty} f_i\,\mathcal{E}_a\!\left(\lambda^a\bfi\right)
			\approx
			\sum_{i=1}^{\infty} f_i\,
			\bigl(\mathrm e^{\lambda\,\bfi^{1/a}}-1\bigr),
			\]
			where the equivalence holds up to multiplicative constants depending only on \(a\).
			
			\item When \(a=1\), we have \(\mathcal{E}_1(x)=\mathrm e^x-1\). Moreover,
			\[
			\mathrm e^{\lambda x}-1
			= \mathrm e^{\frac{\lambda}{2}x}
			\left(\mathrm e^{\frac{\lambda}{2}x}-\mathrm e^{-\frac{\lambda}{2}x}\right)
			= 2\,\mathrm e^{\frac{\lambda}{2}x}\,
			\sinh\!\left(\frac{\lambda}{2}x\right)
			\ge \lambda x\,\mathrm e^{\frac{\lambda}{2}x},
			\]
			for all \(x\ge 0\). As a consequence,
			\[
			\sum_{i=1}^{\infty} f_i(t)\,\lambda\bfi\,
			\mathrm e^{\frac{\lambda}{2}\bfi}
			\le
			\sum_{i=1}^{\infty} f_i(t)\,\mathcal{E}_1(\lambda\bfi)
			=
			\mathcal{E}_1^\infty(\lambda)\langle f\rangle .
			\]
		\end{enumerate}
		
	\end{remark}
	
	\begin{thm}[Propagation of Mittag-Leffler Moments]\label{thm:propagation_of_m_l_tails}
		Suppose that \(\delta>2\alpha_1+\beta_1\).
		Let \(f(t)\) be the solution to \eqref{eqn:discrete_pde} with initial data \(f_0\)
		satisfying Assumption~\ref{asmp_1}. Fix any \(a\in[1,\infty)\) and assume that
		\begin{equation}\label{eqn:con_for_propaga_of_m_l_tail}
			\lambda
			<
			\min\!\left\{
			\left(4C(\sqrt{a})^{\frac{\delta}{\delta-2\alpha_1-\beta_1}}\right)^{-\frac{1}{\delta a}},
			\;
			\left(4\,m_1\langle f_0\rangle\,\mathcal{E}_a(1)\right)^{-\frac{1}{a}}
			\right\},
		\end{equation}
		where the constant \(C>0\) depends only on \(\alpha_1,\beta_1,\delta\).
		
		If the initial data satisfies
		\[
		\sum_{i=1}^{\infty} f_0(i)\,\mathcal{E}_a\!\left(\lambda^a\bfi\right)\le 1,
		\]
		then the solution satisfies, for all \(t\in[0,\infty)\),
		\[
		\sum_{i=1}^{\infty} f_i(t)\,\mathcal{E}_a\!\left(\lambda^a\bfi\right)\le 1 .
		\]
	\end{thm}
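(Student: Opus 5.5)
We follow the forward-invariance strategy used for Theorem~\ref{thm:GWP}: truncate the initial data, show that the set $\{g\ge 0:\ \sum_i g_i\,\mathcal{E}_a(\lambda^a\bfi)\le 1\}$ is forward invariant for the truncated dynamics, and then pass to the limit.

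\textbf{Step 1 (truncation and reduction to moments).} Let $f_0^N := f_0\mathbf 1_{\{i\le N\}}$ and let $f^N$ be the corresponding solution from Theorem~\ref{thm:GWP}. By Theorem~\ref{thm:Lipschitz_dependence}, $f^N(t)\to f(t)$ in $\ell^1$ for each $t$. By Fatou's lemma (lower semicontinuity of $\sum_i f_i\,\mathcal{E}_a(\lambda^a\bfi)$ under pointwise convergence), it suffices to prove the bound for $f^N$. Moreover $\sum_i f^N_0(i)\,\mathcal{E}_a(\lambda^a\bfi)\le 1$.

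For the truncated solution all polynomial moments are finite for all times, by Corollary~\ref{Corr:Poly}. Write $\mathcal{E}(t)=\sum_{k\ge1}m_k(t)\lambda^{ak}/\Gamma(ak+1)$ and compute $\frac{d}{dt}m_k$ using the weak form of $O$.

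\textbf{Step 2 (moment inequality).} The $K_2$, $K_3$ terms describe splitting $\bfj\to\bfi+(\bfj-\bfi)$. Since $x\mapsto x^k$ is superadditive, their contribution to $\frac{d}{dt}m_k$ is nonpositive and can be dropped. The $K_1$ coagulation term contributes
\[
\sum_{i,j}K_1(\bfi,\bfj)f_if_j\bigl[(\bfi+\bfj)^k-\bfi^k-\bfj^k\bigr].
\]
We bound it using the binomial-type estimate $(x+y)^k-x^k-y^k\le \sum_{\ell=1}^{k_*}\binom{k}{\ell}(x^\ell y^{k-\ell}+x^{k-\ell}y^\ell)$ with $k_*=\lfloor (k+1)/2\rfloor$, and then $K_1\lesssim \bfi^{\alpha_1+\beta_1}\bfj^{\alpha_1}+\bfi^{\alpha_1}\bfj^{\alpha_1+\beta_1}$. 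This gives
\[
\frac{d}{dt}m_k\le C\sum_{\ell=1}^{k_*}\binom{k}{\ell}\bigl(m_{\ell+\alpha_1}m_{k-\ell+\alpha_1+\beta_1}+m_{\ell+\alpha_1+\beta_1}m_{k-\ell+\alpha_1}\bigr)-m_{k+\delta},
\]
using the dissipation $-\sum_i\gamma(\bfi)f_i\bfi^k=-m_{k+\delta}$. Noninteger moments are interpolated between integer ones, and the hypothesis $\delta>2\alpha_1+\beta_1$ lets the product terms be absorbed into a fraction of $m_{k+\delta}$ plus lower-order terms.

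\textbf{Step 3 (summed inequality and the role of the bound on $\lambda$).} Multiply by $\lambda^{ak}/\Gamma(ak+1)$ and sum over $k$. The key combinatorial input is the standard estimate
\[
\binom{k}{\ell}\frac{\Gamma(a\ell+1)\Gamma(a(k-\ell)+1)}{\Gamma(ak+1)}\le C\,\binom{k}{\ell}^{1-a}
\]
(as in the Alonso--Gamba--Tasković Mittag-Leffler technique), which makes the double sum bounded by $C\,\mathcal{E}\cdot\widetilde{\mathcal{E}}$, where $\widetilde{\mathcal{E}}$ is a shifted Mittag-Leffler sum.

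The dissipation gives $-\sum_k m_{k+\delta}\lambda^{ak}/\Gamma(ak+1)$. Via $\bfi^{\delta}$ against $\lambda^{-\delta a}$ scaling, this dominates the loss of the shifted sum once $\lambda^{\delta a}\cdot 4C(\sqrt a)^{\delta/(\delta-2\alpha_1-\beta_1)}<1$; this is the first condition in \eqref{eqn:con_for_propaga_of_m_l_tail}. The $k=1$ and low-order contributions are controlled by $m_1\le m_1\langle f_0\rangle$ from Theorem~\ref{thm:GWP}, with $\mathcal{E}_a(1)$ appearing when the tail is split at $\lambda^a\bfi\le1$; this is the second condition. The outcome is
\[
\frac{d}{dt}\mathcal{E}\le \widetilde{\mathcal{E}}\,(2C\lambda^{\delta a}\mathcal{E}-c)
\]
with $c>0$, so that on the level set $\mathcal{E}=1$ the right-hand side is strictly negative.

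\textbf{Step 4 (invariance).} Since $\mathcal{E}(t)$ is continuous for the truncated solution (all moments are uniformly bounded on compacts), the barrier property shows that $\mathcal{E}$ cannot cross $1$. Passing to the limit as in Step 1 gives the statement.

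The main obstacle is Step 3: the summation of the binomial coagulation terms against the Gamma weights, together with the absorption of the $\beta_1$-shifted moments by the $\delta$-dissipation. This is where $\sqrt a$ and the exponent $\delta/(\delta-2\alpha_1-\beta_1)$ enter. I would first prove the interpolation $m_{k+\alpha_1+\beta_1}\le m_k^{\theta}m_{k+\delta}^{1-\theta}$ and then apply Young's inequality termwise before summing.
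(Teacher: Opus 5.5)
There is a genuine gap at the core of your argument. Despite the announced forward-invariance framing, what you actually run is a differential inequality plus a barrier along the truncated solution. In Steps 1--4 you differentiate and apply the barrier to the full series $\mathcal{E}(t)=\sum_{k\ge1}m_k\langle f^N(t)\rangle\lambda^{ak}/\Gamma(ak+1)$. You justify its finiteness and continuity by ``all moments are uniformly bounded on compacts'', but that does not follow.

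\begin{itemize}
\item Corollary~\ref{Corr:Poly} only gives $m_k\langle f^N(t)\rangle\le\max\{m_k\langle f_0^N\rangle,2\mathcal{B}_k(\mathfrak{m}_1)\}$. By \eqref{eqn:def_for_C_k}, $\mathcal{B}_k$ grows roughly like $2^{k^2/(\delta-2\alpha_1-\beta_1+1)}$, which is not summable against $\lambda^{ak}/\Gamma(ak+1)$ for any $\lambda>0$.
\item Truncating the data does not help. By Theorem~\ref{thm:creation_of_positivity}, the truncated solution is positive on an infinite arithmetic progression of indices for every $t>0$, so $\mathcal{E}(t)<\infty$ is exactly what has to be proved.
\end{itemize}

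So ``$\mathcal{E}$ cannot cross $1$'' is circular: nothing rules out $\mathcal{E}(t)=+\infty$ for $t>0$. The termwise differentiation of the series also presupposes convergence of $\sum_k m_{k+\delta}\lambda^{ak}/\Gamma(ak+1)$.

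The missing idea is to work only with quantities that are finite a priori.

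\emph{Fix within your framework.} Run the barrier on the partial sums $\mathcal{E}^n_a(\lambda)\langle f(t)\rangle$. These are finite and differentiable once polynomial moments propagate; note that $\mathcal{E}^\infty_a(\lambda)\langle f_0\rangle\le 1$ already gives $m_k\langle f_0\rangle<\infty$ for all $k$, so the truncation is not even needed. The following all hold uniformly in $n$:
\begin{itemize}
\item the bound \eqref{eqn:combin_estimate_for_moment_Gamma};
\item the interpolation \eqref{eqn:m_l_interpolation_prod};
\item the lower bound $\mathcal{E}^n_{a,\delta}\ge\lambda^{-\delta a}\mathcal{E}^n_a-\lambda^{-(\delta-1)a}\mathfrak{m}_1\mathcal{E}_a(1)$, obtained by splitting at $\bfi=\lambda^{-a}$.
\end{itemize}
Together they give $\frac{d}{dt}\mathcal{E}^n\le\frac{1}{2\lambda^{\delta a}}(c_0-\mathcal{E}^n)$ with $c_0<\frac12$ whenever $\mathcal{E}^n\le 1$. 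Hence $\mathcal{E}^n(t)\le 1$ for all $t$, and you then let $n\to\infty$.

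\emph{The paper's route.} The paper never differentiates along the solution. It verifies the tangency condition for the closed convex set $\mathcal{T}=\mathcal{S}\cap\{\mathcal{E}^\infty_a(\lambda)\le 1\}$. It does this by testing $O[f_N]$ on finitely supported truncations $f_N$ of a state $f\in\mathcal{T}$, where every sum is finite. It then reruns the existence construction inside $\mathcal{T}$ and concludes by uniqueness. No time-continuity of the moment is ever needed.

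\emph{Two smaller corrections to Step 3.}
\begin{itemize}
\item Your closing inequality $\frac{d}{dt}\mathcal{E}\le\widetilde{\mathcal{E}}(2C\lambda^{\delta a}\mathcal{E}-c)$ does not account for the additive term $\lambda^{-(\delta-1)a}\mathfrak{m}_1\mathcal{E}_a(1)$ coming from the region $\lambda^a\bfi<1$. That term does not scale with $\widetilde{\mathcal{E}}$. Beating it on the level set requires bounding the dissipation from below by $\mathcal{E}$ itself, via the lower bound on $\mathcal{E}^n_{a,\delta}$ above. This is precisely where both conditions in \eqref{eqn:con_for_propaga_of_m_l_tail} enter.
\item Young's inequality must be applied once, after summing in $k$: using $\mathcal{E}_a\le 1$, the gain is at most $2^{\beta_1+2}\sqrt{a}\,(\mathcal{E}_{a,\delta})^{(2\alpha_1+\beta_1)/\delta}$. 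Applying it termwise, as you propose at the end, leaves remainders with constants of the size of $\mathcal{C}_k$, and those do not sum against the Gamma weights.
\end{itemize}
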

	\begin{thm}[Creation of Exponential Moments]\label{thm:creation_of_exp_moments} Suppose that \(\delta>2\alpha_1+\beta_1\) and \(\delta\ge 1\).
		Let \(f(t)\) be the solution to \eqref{eqn:discrete_pde} with initial data \(f_0\)
		satisfying Assumption~\ref{asmp_1}. Then for
		\(\lambda>0\) depending only on
		\(m_1\langle f_0\rangle\), \(\alpha_1\), \(\beta_1\), and \(\delta\) small enough, the solution satisfies, for all \(t\in [0,1]\), 
		\[\sum_{i=1}^{\infty}
		f_i(t)\,\bfi\,
		e^{\lambda t^{1/\lfloor\delta\rfloor}\bfi}	
		\le \frac{1}{2\lambda},
		\]
		and for all \(t\in [1,\infty)\),
		\[\sum_{i=1}^{\infty}
		f_i(t)\,\bfi\,
		e^{\lambda \bfi}
		\le \frac{1}{2\lambda}.\]
	\end{thm}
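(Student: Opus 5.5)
Our plan is to work with the time-weighted exponential moment
\[
\Phi(t):=\sum_{i=1}^\infty f_i(t)\,\bfi\,e^{\lambda s(t)\bfi},\qquad s(t)=\min\{t,1\}^{1/\lfloor\delta\rfloor},
\]
and to show that the region $\{\Phi\le 1/(2\lambda)\}$ is forward invariant. This is the same invariant-region philosophy used for Theorem~\ref{thm:GWP} and Theorem~\ref{thm:propagation_of_m_l_tails}. As in the proof of Theorem~\ref{thm:propagation_of_m_l_tails}, we first truncate the initial data to finitely many sizes. The solution then has all moments finite, by Theorem~\ref{thm:creation_of_polynomial_moments} or Corollary~\ref{Corr:Poly}, so term-by-term differentiation is justified. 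At the end we pass to the limit using the Lipschitz stability of Theorem~\ref{thm:Lipschitz_dependence} together with Fatou's lemma (lower semicontinuity). Expanding $e^{x}=\sum_k x^k/k!$ gives
\[
\Phi(t)=\sum_{k\ge0}\frac{(\lambda s)^k}{k!}\,m_{k+1}\langle f(t)\rangle,
\]
so it suffices to control the moment ODEs.

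\textbf{Step 1.} We compute $\frac{d}{dt}m_{k+1}$ by testing \eqref{eqn:discrete_pde} against $\bfi^{k+1}$, using the weak form of $O$.

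The $S^{(2)},S^{(3)}$ contributions are fragmentation-type. For these, $(\bfi+\bfj)^{k+1}$ is split into $\bfi^{k+1}$ and $\bfj^{k+1}$, and since $(\bfi+\bfj)^{k+1}\ge \bfi^{k+1}+\bfj^{k+1}$ their contribution is nonpositive.

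The coagulation part $S^{(1)}$ gives
\[
\sum_{i,j}K_1(\bfi,\bfj)f_if_j\bigl[(\bfi+\bfj)^{k+1}-\bfi^{k+1}-\bfj^{k+1}\bigr].
\]
We bound this with the binomial-type inequality of Bobylev--Gamba type,
\[
(\bfi+\bfj)^{k+1}-\bfi^{k+1}-\bfj^{k+1}\le \sum_{l=1}^{k}\binom{k+1}{l}\bigl(\bfi^l\bfj^{k+1-l}\bigr),
\]
together with $K_1\lesssim \bfi^{\alpha_1+\beta_1}\bfj^{\alpha_1}+\bfi^{\alpha_1}\bfj^{\alpha_1+\beta_1}$. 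This yields products of moments $m_{l+\alpha_1+\beta_1}m_{k+1-l+\alpha_1}$.

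The damping term gives $-m_{k+1+\delta}$.

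\textbf{Step 2.} We sum the moment estimates with weights $(\lambda s)^k/k!$.

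The damping sums to $-\sum_k \frac{(\lambda s)^k}{k!}m_{k+1+\delta}$. Because $\delta\ge1$ and $\bfi\ge h$, this dominates $h^{\delta-1}\sum_k \frac{(\lambda s)^k}{k!}m_{k+2}\gtrsim \partial_\lambda$-type quantities.

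The time derivative of the weight gives
\[
\lambda s'(t)\sum_k\frac{(\lambda s)^{k-1}}{(k-1)!}m_{k+1}=\lambda s'\sum_i f_i\bfi^2e^{\lambda s\bfi}.
\]
This must be absorbed by the damping. Writing $n=\lfloor\delta\rfloor$, we have $s'=\tfrac1n t^{1/n-1}$, which is singular at $0$. Using $\bfi^{\delta}\ge h^{\delta-n}\bfi^{n}$ and splitting the index set into $\{\lambda s\bfi\le A\}$ and its complement, the term $s'\bfi^2$ is bounded by $\bfi^{1+\delta}$ on the large-$\bfi$ region. This works because $(s\bfi)^{n-1}\gtrsim t^{(n-1)/n}\bfi^{n-1}$ beats $t^{1/n-1}$ exactly when $s\bfi\gtrsim1$; this is why the exponent $1/\lfloor\delta\rfloor$ appears. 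On the small region, $\bfi\le A/(\lambda s)$ and $e^{\lambda s\bfi}\le e^A$, so the contribution is $\lesssim m_1$ times a negative power of $t$. That term is handled by Theorem~\ref{thm:creation_of_polynomial_moments} and is integrable in time.

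The coagulation sum recombines, by Cauchy-product manipulations as in Theorem~\ref{thm:propagation_of_m_l_tails}, into a bound of the form
\[
C\lambda s\,\Bigl(\sum_i f_i\bfi^{1+\alpha_1+\beta_1}e^{\lambda s\bfi}\Bigr)\Bigl(\sum_i f_i\bfi^{\alpha_1}e^{\lambda s\bfi}\Bigr).
\]
Here we use $\delta>2\alpha_1+\beta_1$ and interpolation between $m_1$-weighted and $\bfi^{1+\delta}$-weighted sums, together with Young's inequality, to bound it by $\tfrac12\times$damping plus $C(m_1\langle f_0\rangle)\Phi^{p}$, with a small prefactor when $\lambda$ is small.

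\textbf{Step 3.} The resulting differential inequality has the form
\[
\Phi'\le -c\,\Phi^{1+\epsilon}+C(m_1\langle f_0\rangle).
\]
The region $\{\Phi\le 1/(2\lambda)\}$ is forward invariant once $\lambda$ is small in terms of $m_1\langle f_0\rangle$, since then $\Phi(0)=m_1\langle f_0\rangle\le 1/(2\lambda)$ by Theorem~\ref{thm:GWP} and the barrier forces $\Phi'<0$ at the boundary. For $t\ge1$ the weight is frozen at $\lambda$ and the same barrier argument propagates the bound, just as in Theorem~\ref{thm:propagation_of_m_l_tails} with $a=1$.

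\textbf{Main obstacle.} The hardest step is Step 2's absorption of the $s'(t)$ term near $t=0$. It requires careful splitting with $\lfloor\delta\rfloor$ and uses $\delta\ge1$. If the direct split fails, we would first try replacing $\bfi^2$ by $\bfi^{1+n}$ via $\bfi\ge h$, and matching powers of $t$ through $(\lambda s\bfi)^{n-1}\le (n-1)!\,e^{\lambda s\bfi}$-type bounds.
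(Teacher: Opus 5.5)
There is a genuine gap between Steps 2 and 3. Your small-region term is scale-critical. Since $s'/s=1/(\lfloor\delta\rfloor t)$,
\[
\lambda s'\sum_{\lambda s\bfi\le A} f_i\,\bfi^2 e^{\lambda s\bfi}\;\le\;\lambda s'\,e^{A}\,\frac{A}{\lambda s}\,m_1\langle f\rangle\;=\;\frac{Ae^{A}}{\lfloor\delta\rfloor}\,\frac{m_1\langle f\rangle}{t}.
\]
If you route it instead through the creation bound $m_2\langle f(t)\rangle\lesssim t^{-1/\delta}$ of Theorem~\ref{thm:creation_of_polynomial_moments}, you get $\lambda\,t^{1/\lfloor\delta\rfloor-1-1/\delta}$. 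That is again $\lambda t^{-1}$ whenever $\delta\in\{2,3,\dots\}$. (For non-integer $\delta$ it is integrable, and for $\delta=1$ no splitting is needed since $\lambda s'\bfi^2=\lambda\bfi^{1+\delta}$.) You cannot fall back on a uniform bound for $m_2$ coming from $m_\kappa\langle f_0\rangle$, because $\lambda$ may depend only on $m_1\langle f_0\rangle,\alpha_1,\beta_1,\delta$. So for integer $\delta\ge 2$ the forcing is not integrable at $t=0$. Any damping with a $t$-independent coefficient, such as your $-c\Phi^{1+\epsilon}$, is then beaten by it at the level $\Phi=1/(2\lambda)$ for small $t$. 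Neither integrating the inequality nor the barrier argument closes.

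The superlinear form is not available in any case. If $f$ sits at a single index $i_0$ with $f_{i_0}=m_1/(i_0h)$, then $\sum_i f_i\bfi^{1+\delta}e^{\lambda s\bfi}=\Phi\,\bigl(\log(\Phi/m_1)/(\lambda s)\bigr)^{\delta}$. This is only logarithmically superlinear in $\Phi$, but it carries the factor $(\lambda s)^{-\delta}$.

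That factor is the missing idea. On $\{\lambda s\bfi\ge A\}$ one has $\bfi^\delta\ge (A/(\lambda s))^\delta$, hence $\sum_i f_i\bfi^{1+\delta}e^{\lambda s\bfi}\ge (A/(\lambda s))^{\delta}\bigl(\Phi-e^{A}m_1\langle f\rangle\bigr)$. This is a linear damping whose coefficient is at least $A^\delta\lambda^{-\delta}t^{-1}$ on $(0,1]$, because $\delta/\lfloor\delta\rfloor\ge 1$. At $\Phi=1/(2\lambda)$ it dominates, pointwise in $t$, both the $m_1/t$ forcing and your coagulation remainder (which after the Young step is $O(\lambda^{-1})$ there), once $\lambda$ is small in terms of $m_1\langle f_0\rangle$. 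No integrability is needed.

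This is exactly the paper's mechanism. It works with $\mathcal{E}^n_1(\lambda t^{1/\mathfrak d})\langle f\rangle=\sum_{k\le n}m_k(\lambda t^{1/\mathfrak d})^k/k!$; the untruncated version has $x$-derivative equal to your $\Phi$ at $x=\lambda t^{1/\mathfrak d}$, which is why its forcing is only $t^{-(\mathfrak d-1)/\mathfrak d}$. The paper proves $\mathcal{E}^n_{1,\mathfrak d}\ge \frac{1}{t\lambda^{\mathfrak d}}\mathcal{E}^n_1-\frac{C'}{t^{(\mathfrak d-1)/\mathfrak d}\lambda^{\mathfrak d-1}}$, which gives $y'\le C''\lambda^{1-\mathfrak d}t^{-(\mathfrak d-1)/\mathfrak d}-\frac{1}{4t\lambda^{\mathfrak d}}\,y$. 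A continuity argument on $T_n$ then yields $y\le t^{1/\mathfrak d}$, and $e^x-1\ge xe^{x/2}$ converts this into the stated bound.

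Two smaller points. First, the second factor of your coagulation bound should be $\sum_i f_i\bfi^{1+\alpha_1}e^{\lambda s\bfi}$. This follows from $\psi(x+y)-\psi(x)-\psi(y)\le 2\lambda s\,xy\,e^{\lambda s(x+y)}$ for $\psi(x)=xe^{\lambda sx}$. With $\bfi^{\alpha_1}$ it would not interpolate between $\Phi$ and the damping when $\alpha_1<1$. Second, finiteness of all polynomial moments for truncated data does not by itself give $\Phi(t)<\infty$, let alone differentiability, because the available bounds $\mathcal{B}_k$ grow superexponentially in $k$. You need to truncate the weight, as the paper does by stopping the Taylor series at order $n$ and letting $n\to\infty$ at the end.
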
	
	\section{Moments estimates}
	
	\begin{prop}[Weak Formulation]
		For any functions \(f,\varphi:\mathbb{N}\to\mathbb{R}\) such that all terms on the
		right-hand side are well defined and finite, we have the weak formulation
		\begin{equation}\label{eqn:weak_formulation}
			\sum_{i=1}^{\infty} S_i[f]\,\varphi(i)
			=
			\sum_{i,j=1}^{\infty}
			\Bigl[
			K_1(\bfi,\bfj)\,f_i f_j
			-
			\bigl(K_2(\bfi,\bfj)+K_3(\bfi,\bfj)\bigr)\,f_i f_{i+j}
			\Bigr]
			\bigl(\varphi(i+j)-\varphi(i)-\varphi(j)\bigr).
		\end{equation}
		
	\end{prop}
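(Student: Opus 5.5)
The plan is to treat the three collision operators separately, multiply each $S_i^{(n)}[f]$ by $\varphi(i)$, sum over $i$, and reorganize each double sum by a change of summation variables. Absolute convergence of all the series (the hypothesis that the terms on the right are well defined and finite) is what justifies Fubini and the reindexing. Symmetry of the kernels, $K_n(\bfi,\bfj)=K_n(\bfj,\bfi)$, is used throughout.

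\textbf{The $K_1$ part.} The gain term is
\[
\sum_i\varphi(i)\sum_{j=1}^{i-1}K_1(\bfj,\bfi-\bfj)f_jf_{i-j}.
\]
Setting $l=i-j$, the pair $(i,j)$ with $1\le j\le i-1$ corresponds bijectively to $(j,l)\in\mathbb{N}^2$, so the term becomes
\[
\sum_{j,l}K_1(\bfj,\mathbf{l})f_jf_l\,\varphi(j+l).
\]
The loss term is $-2\sum_{i,j}K_1(\bfi,\bfj)f_if_j\varphi(i)$. By symmetry of $K_1$ and of $f_if_j$ under swapping $i$ and $j$, this equals
\[
-\sum_{i,j}K_1(\bfi,\bfj)f_if_j\bigl(\varphi(i)+\varphi(j)\bigr).
\]
Together these give exactly the $K_1$ part of \eqref{eqn:weak_formulation}.

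\textbf{The $K_2$ part} ($K_3$ is identical). There are three sums to rewrite.

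\emph{Loss term.} It is $-\sum_i\varphi(i)\sum_{j<i}K_2(\bfi-\bfj,\bfj)f_if_j$. Rename $i\to$ the larger index and write it as $j+l$ with $l=i-j\ge1$. It becomes
\[
-\sum_{j,l}K_2(\mathbf{l},\bfj)\,f_{j+l}f_j\,\varphi(j+l).
\]

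\emph{Second sum.} It is $\sum_i\varphi(i)\sum_{j>i}K_2(\bfj-\bfi,\bfi)f_if_j$. Put $j=i+l$ to get
\[
\sum_{i,l}K_2(\mathbf{l},\bfi)f_if_{i+l}\varphi(i).
\]

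\emph{Third sum.} It is $\sum_i\varphi(i)\sum_{j>i}K_2(\bfi,\bfj-\bfi)f_jf_{j-i}$. Put $j=i+l$ to get
\[
\sum_{i,l}K_2(\bfi,\mathbf{l})f_lf_{i+l}\varphi(i).
\]

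Now relabel every sum so that it has the form $K_2(\bfi,\bfj)f_if_{i+j}$ times a test-function combination:
\begin{itemize}
\item[(a)] In the loss term, take $(j,l)\to(i,j)$ and use symmetry. This gives $-K_2(\bfi,\bfj)f_if_{i+j}\varphi(i+j)$.
\item[(b)] In the second sum, $(i,l)\to(i,j)$ directly gives $+K_2(\bfi,\bfj)f_if_{i+j}\varphi(i)$.
\item[(c)] In the third sum, swap names $(i,l)\to(j,i)$. This gives $K_2(\bfj,\bfi)f_if_{i+j}\varphi(j)$, which equals $+K_2(\bfi,\bfj)f_if_{i+j}\varphi(j)$ by symmetry.
\end{itemize}
Summing (a), (b) and (c) yields
\[
-\sum_{i,j}K_2(\bfi,\bfj)f_if_{i+j}\bigl(\varphi(i+j)-\varphi(i)-\varphi(j)\bigr),
\]
as claimed. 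Adding the $K_1$ and $K_2$ parts, and the analogous $K_3$ part, proves the proposition.

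\textbf{Main obstacle.} The only delicate point is bookkeeping: matching the kernel arguments in $S^{(2)}$ and $S^{(3)}$ after each relabeling, which symmetry resolves. A secondary point is justifying the rearrangements. I would handle this by interpreting the hypothesis as absolute summability of each separated piece (for instance $\sum K_n|f_if_{i+j}\varphi(\cdot)|<\infty$), so that Tonelli and Fubini permit every interchange and reindexing used above.
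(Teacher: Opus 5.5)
Your proposal is correct and is essentially the paper's proof: multiply each $S^{(n)}_i[f]$ by $\varphi(i)$, reindex via $l=i-j$ or $j=i+l$ under Fubini, and use the symmetry of $K_n$; your reading of the hypothesis as absolute summability of each separated piece matches the paper's standing assumption that all intermediate sums are finite. Two minor remarks: in (b) the reindexing gives $K_2(\bfj,\bfi)$, so you need symmetry there as well rather than getting the form ``directly''; and your use of $K_2(\bfj-\bfi,\bfi)$ in the second sum of $S^{(2)}$ agrees with the paper's proof and with $O^+$ (the displayed definition of $S^{(2)}$ writes $K_2(\bfj,\bfi)$, which is a typo).
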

	\begin{proof}
		
		Assume that all terms appearing in the following computations are finite.
		Then, by Fubini’s theorem, we obtain
		\begin{align*}
			\sum_{i=1}^{\infty} S_i^{(1)}[f]\,\varphi(i)
			= {} &
			\sum_{i=1}^{\infty}\sum_{j=1}^{i-1}
			K_1(\bfj,\bfi-\bfj)\,f_j f_{i-j}\,\varphi(i)
			-2\sum_{i=1}^{\infty}\sum_{j=1}^{\infty}
			K_1(\bfi,\bfj)\,f_i f_j\,\varphi(i) \\
			= {} &
			\sum_{i,j=1}^{\infty}
			K_1(\bfi,\bfj)\,f_i f_j\,\varphi(i+j)
			-\sum_{i,j=1}^{\infty}
			K_1(\bfi,\bfj)\,f_i f_j\,[\varphi(i)+\varphi(j)] \\
			= {} &
			\sum_{i,j=1}^{\infty}
			K_1(\bfi,\bfj)\,f_i f_j\,
			\bigl[\varphi(i+j)-\varphi(i)-\varphi(j)\bigr].
		\end{align*}
		
		Next, using the symmetry of the kernel \(K_2\), we compute
		\begin{align*}
			\sum_{i=1}^{\infty} S_i^{(2)}[f]\,\varphi(i)
			= {} &
			-\sum_{i=1}^{\infty}\sum_{j=1}^{i-1}
			K_2(\bfj,\bfi-\bfj)\,f_i f_j\,\varphi(i)
			+\sum_{i=1}^{\infty}\sum_{j=i+1}^{\infty}
			K_2(\bfj-\bfi,\bfi)\,f_i f_j\,\varphi(i) \\
			&\quad
			+\sum_{i=1}^{\infty}\sum_{j=i+1}^{\infty}
			K_2(\bfi,\bfj-\bfi)\,f_j f_{j-i}\,\varphi(i) \\
			= {} &
			-\sum_{i,j=1}^{\infty}
			K_2(\bfi,\bfj)\,f_i f_{i+j}\,\varphi(i+j)
			+\sum_{i,j=1}^{\infty}
			K_2(\bfi,\bfj)\,f_i f_{i+j}\,\varphi(i) \\
			&\quad
			+\sum_{i,j=1}^{\infty}
			K_2(\bfi,\bfj)\,f_i f_{i+j}\,\varphi(j) \\
			= {} &
			-\sum_{i,j=1}^{\infty}
			K_2(\bfi,\bfj)\,f_i f_{i+j}\,
			\bigl[\varphi(i+j)-\varphi(i)-\varphi(j)\bigr].
		\end{align*}

		Similarly, we obtain
		\[
		\sum_{i=1}^{\infty} S_i^{(3)}[f]\,\varphi(i)
		=
		-\sum_{i,j=1}^{\infty}
		K_3(\bfi,\bfj)\,f_i f_{i+j}\,
		\bigl[\varphi(i+j)-\varphi(i)-\varphi(j)\bigr].
		\]
		
		%    	\begin{align*}
			%    		\sum_{i=1}^\infty S_i^{(3)}[f]\varphi(i)= & -\sum_{i=1}^\infty\sum_{j=1}^{i-1} K_3(j, i-j) f_i f_{i-j} \varphi(i) +\sum_{i=1}^\infty\sum_{j=i+1}^{\infty} K_3(j-i, i) f_j f_i\varphi(i) \\
			%    		+& \sum_{i=1}^\infty\sum_{j=1}^{\infty} K_3(i, j) f_j f_{i+j}\varphi(i)\\
			%    		= & -\sum_{i,j=1}^\infty K_3(i, j) f_{j} f_{i+j} \varphi(i+j) +\sum_{i=1}^\infty K_3(i, j) f_j f_{i+j}\varphi(j) \\
			%    		+& \sum_{i,j=1}^\infty K_3(i, j) f_j f_{i+j}\varphi(i)\\
			%    		=&- \sum_{i,j=1}^\infty K_3(i, j) f_j f_{i+j}[\varphi(i+j)-\varphi(i)-\varphi(j)]
			%    	\end{align*}
	\end{proof}	
	\begin{prop}
		Let \(f:\mathbb{N}\to\mathbb{R}_{\ge 0}\), and let \(0\le a<k<b\). Then the following
		moment interpolation inequalities hold:
		\begin{enumerate}
			\item
			\begin{equation}\label{eqn:moment_interpolation}
				m_k\langle f\rangle
				\le
				\bigl(m_a\langle f\rangle\bigr)^{\frac{b-k}{b-a}}
				\bigl(m_b\langle f\rangle\bigr)^{\frac{k-a}{b-a}} .
			\end{equation}
			
			\item
			For any \(\varepsilon>0\), there exists a constant \(C_\varepsilon>0\) such that
			\begin{equation}\label{eqn:moment_interpolation_plus}
				m_k\langle f\rangle
				\le
				C_\varepsilon\, m_a\langle f\rangle
				+\varepsilon\, m_b\langle f\rangle .
			\end{equation}
		\end{enumerate}
		
	\end{prop}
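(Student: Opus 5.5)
For part (1), I will apply Hölder's inequality to the sum defining $m_k\langle f\rangle$, using the exponent splitting $k=a\theta+b(1-\theta)$ with $\theta=\frac{b-k}{b-a}\in(0,1)$, so that $1-\theta=\frac{k-a}{b-a}$. Writing each summand as
\[
f_i\,\bfi^{\,k}=\bigl(f_i\,\bfi^{\,a}\bigr)^{\theta}\bigl(f_i\,\bfi^{\,b}\bigr)^{1-\theta},
\]
which is valid since $f_i\ge 0$ and $\bfi>0$, Hölder's inequality with conjugate exponents $p=1/\theta$ and $q=1/(1-\theta)$ gives
\[
\sum_{i} f_i\,\bfi^{\,k}\le\Bigl(\sum_i f_i\,\bfi^{\,a}\Bigr)^{\theta}\Bigl(\sum_i f_i\,\bfi^{\,b}\Bigr)^{1-\theta},
\]
which is exactly \eqref{eqn:moment_interpolation}. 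The only bookkeeping needed is the convention when some moment is infinite. If $m_b\langle f\rangle=\infty$, the right-hand side is $+\infty$ (or we interpret $0\cdot\infty$ appropriately when $m_a=0$, but $m_a=0$ forces $f\equiv 0$ and then both sides vanish). So I would simply remark that the inequality is trivial in these cases and assume all moments are finite otherwise.

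For part (2), I will combine (1) with Young's inequality $xy\le \frac{x^p}{p}+\frac{y^q}{q}$ in its $\varepsilon$-weighted form. I set $x=\eta^{-1}m_a^{\theta}$ and $y=\eta\, m_b^{1-\theta}$, with $p=1/\theta$ and $q=1/(1-\theta)$. This yields
\[
m_k\le \theta\,\eta^{-1/\theta} m_a+(1-\theta)\,\eta^{1/(1-\theta)} m_b .
\]
Choosing $\eta$ so that $(1-\theta)\eta^{1/(1-\theta)}=\varepsilon$ gives \eqref{eqn:moment_interpolation_plus} with the explicit constant $C_\varepsilon=\theta\,\eta^{-1/\theta}$, which depends only on $\varepsilon,a,b,k$.

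An alternative direct route is to split the sum at a threshold $\bfi\le R$ versus $\bfi>R$. This uses $\bfi^{\,k}\le R^{k-a}\bfi^{\,a}$ on the first part and $\bfi^{\,k}\le R^{k-b}\bfi^{\,b}$ on the second, then takes $R=\varepsilon^{-1/(b-k)}$. I would mention it as a check.

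There is no real obstacle here. The only point requiring care is handling the degenerate cases ($a=0$, where $m_0$ is the $\ell^1$ norm, and infinite moments) so the statement holds as written.
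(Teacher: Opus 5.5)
Your proposal is correct and follows the paper's proof: part (1) uses the same Hölder splitting $f_i\bfi^{k}=(f_i\bfi^{a})^{\frac{b-k}{b-a}}(f_i\bfi^{b})^{\frac{k-a}{b-a}}$. For part (2) the paper applies Young's inequality pointwise, $\bfi^{k}\le C_\varepsilon\bfi^{a}+\varepsilon\bfi^{b}$, and then sums over $i$, which is closer to your threshold-splitting check than to your Young-on-the-product argument. The two are equivalent, since Young with the same weights gives the same $C_\varepsilon$.
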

	%    $$m_k  \langle f \rangle \le (m_{k\frac{p}{q}} \langle f \rangle )^q (m_{k\frac{1-p}{1-q}} \langle f \rangle )^{1-q} $$
	%
	%Set $a=k\frac{p}{q}, b=k \frac{1-p}{1-q}$, it becomes clear that, if $a < k <b$, then
	\begin{proof}
		\leavevmode
		\begin{enumerate}
			\item
			Since \(f\ge 0\), by Hölder’s inequality we have
			\begin{align*}
				m_k\langle f\rangle
				&= \sum_{i=1}^{\infty} f_i\,\bfi^{k}
				= \sum_{i=1}^{\infty}
				\bigl(f_i\,\bfi^{a}\bigr)^{\frac{b-k}{b-a}}
				\bigl(f_i\,\bfi^{b}\bigr)^{\frac{k-a}{b-a}} \\
				&\le
				\left(\sum_{i=1}^{\infty} f_i\,\bfi^{a}\right)^{\frac{b-k}{b-a}}
				\left(\sum_{i=1}^{\infty} f_i\,\bfi^{b}\right)^{\frac{k-a}{b-a}} \\
				&=
				\bigl(m_a\langle f\rangle\bigr)^{\frac{b-k}{b-a}}
				\bigl(m_b\langle f\rangle\bigr)^{\frac{k-a}{b-a}},
			\end{align*}
			which proves \eqref{eqn:moment_interpolation}.
			
			\item
			By Young’s inequality, for any \(\varepsilon>0\) there exists a constant
			\(C_\varepsilon>0\) such that, for all \(i\in\mathbb{N}\),
			\[
			\bfi^{k} \le C_\varepsilon\,\bfi^{a} + \varepsilon\,\bfi^{b}.
			\]
			Therefore,
			\begin{align*}
				m_k\langle f\rangle
				&= \sum_{i=1}^{\infty} f_i\,\bfi^{k}
				\le \sum_{i=1}^{\infty}
				f_i\bigl(C_\varepsilon\,\bfi^{a} + \varepsilon\,\bfi^{b}\bigr) \\
				&=
				C_\varepsilon\, m_a\langle f\rangle
				+ \varepsilon\, m_b\langle f\rangle,
			\end{align*}
			which proves \eqref{eqn:moment_interpolation_plus}.
		\end{enumerate}
	\end{proof}
	
	\begin{remark}\label{rmk:eat_the_term_by_constant}
		Inequalities \eqref{eqn:moment_interpolation} and
		\eqref{eqn:moment_interpolation_plus} imply that, if
		\(f\) satisfies
		\[
		m_a\langle f\rangle \le \mathfrak{m}_a,
		\qquad
		m_b\langle f\rangle \le \mathfrak{m}_b,
		\]
		for some positive constants \(\mathfrak{m}_a\) and \(\mathfrak{m}_b\), then for any
		\(k\) with \(a\le k\le b\),
		\[
		m_k\langle f\rangle
		\le C(\mathfrak{m}_a,\mathfrak{m}_b)
		\lesssim_{\mathfrak{m}_a,\mathfrak{m}_b} 1 .
		\]
		
	\end{remark}
	\begin{prop}[Conservation Law]
		
		Let \(f:\mathbb{N}\to\mathbb{R}_{\ge 0}\) satisfy
		\[
		m_{\alpha_n+\beta_n+1}\langle f\rangle<\infty,
		\qquad \forall\, n\in\{1,2,3\}.
		\]
		Then the operator \(S\) conserves the first moment, namely
		\begin{equation}\label{eqn:moment_conservation_S}
			m_1\langle S[f]\rangle = 0 .
		\end{equation}
		
	\end{prop}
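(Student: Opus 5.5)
The plan is to apply the weak formulation \eqref{eqn:weak_formulation} with the test function \(\varphi(i)=\bfi=ih\). Since \(\varphi\) is linear in \(i\),
\[
\varphi(i+j)-\varphi(i)-\varphi(j)=(i+j)h-ih-jh=0
\]
for all \(i,j\in\mathbb{N}\). Formally this makes every summand on the right-hand side vanish, which gives \(m_1\langle S[f]\rangle=\sum_i S_i[f]\,\bfi=0\).

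The substance of the proof is justifying the Fubini rearrangements used to derive \eqref{eqn:weak_formulation}. These require absolute convergence of each double series appearing there, term by term, before the cancellation is used. I will check the following sums, each of which should be finite.

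The first is \(\sum_{i,j}K_1(\bfi,\bfj)f_if_j\,\bfi\). Using \((\bfi+\bfj)^{\beta_1}\lesssim \bfi^{\beta_1}+\bfj^{\beta_1}\), it is bounded by a sum of products of moments such as \(m_{\alpha_1+\beta_1+1}\langle f\rangle\, m_{\alpha_1}\langle f\rangle\) and \(m_{\alpha_1+1}\langle f\rangle\, m_{\alpha_1+\beta_1}\langle f\rangle\). The same estimate handles the weights \(\bfj\) and \((\bfi+\bfj)\).

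For the \(K_2,K_3\) terms, the sum \(\sum_{i,j}K_n(\bfi,\bfj)f_if_{i+j}(\bfi+\bfj)\) is treated with the substitution \(l=i+j>i\). This gives \(K_n(\bfi,\bfl-\bfi)\le \bfl^{2\alpha_n+\beta_n}\), which is cruder than needed, so I will split it more carefully as \(\bfi^{\alpha_n}(\bfl-\bfi)^{\alpha_n}\bfl^{\beta_n}\le \bfi^{\alpha_n}\bfl^{\alpha_n+\beta_n}\). The resulting bound is \(\lesssim m_{\alpha_n}\langle f\rangle\, m_{\alpha_n+\beta_n+1}\langle f\rangle\).

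All moments of order between \(0\) and \(\alpha_n+\beta_n+1\) are finite. This follows from Remark~\ref{rmk:high_moment_est_low_moment}, since on the lattice lower moments are controlled by higher ones via \(\bfi^{k_1}\le h^{k_1-k_2}\bfi^{k_2}\); the zeroth moment is handled the same way. Hence the hypothesis \(m_{\alpha_n+\beta_n+1}\langle f\rangle<\infty\) makes every series absolutely convergent.

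Once absolute convergence is established, I will apply the weak formulation separately to \(S^{(1)}\), \(S^{(2)}\) and \(S^{(3)}\), exactly as in its proof, with \(\varphi(i)=\bfi\). This shows that \(\sum_i S^{(n)}_i[f]\bfi\) converges and equals zero for each \(n\). Summing over \(n\) gives \eqref{eqn:moment_conservation_S}.

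The main obstacle I expect is the careful bookkeeping in the absolute-convergence step, in particular the reindexed \(K_2,K_3\) sums. In those sums the weight \(\bfi+\bfj\) attaches to \(f_{i+j}\), so one must check that the total moment order never exceeds \(\alpha_n+\beta_n+1\) on a single factor.
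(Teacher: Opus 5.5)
Your proposal is correct and follows the paper's route: test the weak formulation \eqref{eqn:weak_formulation} with $\varphi(i)=\bfi$ and use $(\bfi+\bfj)-\bfi-\bfj=0$. The paper only asserts that the moment hypothesis makes all terms finite, whereas you supply the absolute-convergence estimates that justify the Fubini rearrangements, in particular the bound $m_{\alpha_n}\langle f\rangle\, m_{\alpha_n+\beta_n+1}\langle f\rangle$ for the reindexed $K_2,K_3$ sums, which is exactly what the hypothesis provides.
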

	\begin{proof}
		This condition ensures that all terms in the weak formulation
		\eqref{eqn:weak_formulation} are finite. Consequently,
		\eqref{eqn:moment_conservation_S} follows from the identity
		\[
		(\bfi+\bfj)-\bfi-\bfj = 0,
		\qquad \forall\, i,j\in\mathbb{N}.
		\]
		
	\end{proof}
	\begin{lem}[Moment Estimates for \(S\)]
		Let \(f:\mathbb{N}\to\mathbb{R}_{\ge 0}\) be such that all its moments are finite. Then the following estimate hold
%		\begin{itemize}
%			\item For \(k=0\),
%			\begin{equation}
%				m_0\langle S[f]\rangle
%				\le
%				m_{\alpha_2}\langle f\rangle\, m_{\alpha_2+\beta_2}\langle f\rangle
%				+
%				m_{\alpha_3}\langle f\rangle\, m_{\alpha_3+\beta_3}\langle f\rangle .
%			\end{equation}
%			
%			\item 
			for \(k\ge 1\),
			\begin{equation}\label{eqn:moment_k_estimation}
				m_k\langle S[f]\rangle
				\le
				2^{\beta_1+1}\bigl(2^k-2\bigr)\,
				m_{\alpha_1+\beta_1+k-1}\langle f\rangle\,
				m_{\alpha_1+1}\langle f\rangle .
			\end{equation}
%		\end{itemize}
	\end{lem}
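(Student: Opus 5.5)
We plug $\varphi(i)=\bfi^k$ into the weak formulation \eqref{eqn:weak_formulation}. Since all moments of $f$ are finite, every term there is finite. For $k\ge1$ we have $(\bfi+\bfj)^k-\bfi^k-\bfj^k\ge 0$, so the $K_2$ and $K_3$ contributions enter with a minus sign and are nonpositive. Dropping them gives
\[
m_k\langle S[f]\rangle\le \sum_{i,j=1}^\infty K_1(\bfi,\bfj)f_if_j\bigl[(\bfi+\bfj)^k-\bfi^k-\bfj^k\bigr].
\]
Only the $K_1$ (coagulation) part therefore needs to be estimated.

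Next we bound the weight. By symmetry in $(i,j)$, we may restrict the sum to $\bfj\le\bfi$ at the cost of a factor $2$; the diagonal is counted twice, which only helps. On this region $\bfi+\bfj\le 2\bfi$, so $(\bfi+\bfj)^{\beta_1}\le 2^{\beta_1}\bfi^{\beta_1}$. For the bracket we use the elementary inequality
\[
(x+y)^k-x^k-y^k\le (2^k-2)\,x^{k-1}y,\qquad 0<y\le x,\ k\ge1.
\]
To prove it, set $t=y/x\in(0,1]$. The claim becomes $g(t):=(1+t)^k-1-t^k\le(2^k-2)t$. Now $g(0)=0$ and $g(1)=2^k-2$, and for $k\ge 1$ the function $g$ is convex on $[0,1]$ when $k\ge2$, so it lies below the chord. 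For $1\le k<2$ convexity fails, and there I would argue directly: $g(t)/t$ is nondecreasing on $(0,1]$. This sub-case is the main obstacle, and the approach I would try first is to write $g(t)/t=\int_0^1 k\bigl[(1+st)^{k-1}-(st)^{k-1}\bigr]ds\,$ and compare the integrand with its value at $t=1$. If monotonicity fails, the fallback is to accept a slightly worse constant; the stated constant $2^k-2$ is exactly the chord value, so the chord bound is what we aim for.

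Combining the two bounds on the region $\bfj\le\bfi$ gives
\[
K_1(\bfi,\bfj)\bigl[(\bfi+\bfj)^k-\bfi^k-\bfj^k\bigr]\le 2^{\beta_1}(2^k-2)\,\bfi^{\alpha_1+\beta_1+k-1}\,\bfj^{\alpha_1+1}.
\]
Summing over all $i,j$ (enlarging the region back to all pairs only increases the sum) and including the symmetry factor $2$ yields
\[
m_k\langle S[f]\rangle\le 2^{\beta_1+1}(2^k-2)\,m_{\alpha_1+\beta_1+k-1}\langle f\rangle\,m_{\alpha_1+1}\langle f\rangle,
\]
which is \eqref{eqn:moment_k_estimation}. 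For $k=1$ both sides vanish or the bound is trivial, consistent with \eqref{eqn:moment_conservation_S}.
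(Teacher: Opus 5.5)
Your argument follows the paper's proof step for step: the weak formulation with $\varphi(i)=\bfi^k$, dropping the $K_2,K_3$ terms, symmetrizing, bounding $(\bfi+\bfj)^{\beta_1}\le 2^{\beta_1}\max\{\bfi,\bfj\}^{\beta_1}$, then the chord bound. For $k=1$ and $k\ge 2$ it is complete, and your convexity argument supplies a step the paper leaves unproved.

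The gap is the range $1<k<2$, and it cannot be closed the way you propose. There
\[
g''(t)=k(k-1)\bigl[(1+t)^{k-2}-t^{k-2}\bigr]<0 \qquad\text{on } (0,1],
\]
so $g$ is strictly concave and lies strictly \emph{above} its chord on $(0,1)$. Your integral formula confirms this: $u\mapsto (1+u)^{k-1}-u^{k-1}$ is decreasing when $0<k-1<1$, so the integrand decreases in $t$. Hence $g(t)/t$ is nonincreasing, with $g(t)/t\to k>2^k-2$ as $t\to 0^+$. So $(x+y)^k-x^k-y^k\le (2^k-2)x^{k-1}y$ is false for every $k\in(1,2)$; the sharp constant is $\max\{k,2^k-2\}$.

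The paper's proof asserts the same inequality (with $x\le y$) for all $k>1$ without justification, so it has the identical hole. The problem is not only in the proof: the stated bound itself can fail. Take $h=1$, $k=6/5$, $\alpha_1=1/100$, $\beta_1=1$, $\alpha_2=\alpha_3=1/2$, $\beta_2=\beta_3=1/10$ (admissible with, e.g., $\delta=2$), and $N=10^6$. Let $f$ have $f_1=1$, $f_N=N^{-1.11}$, and all other entries zero. In \eqref{eqn:weak_formulation}:
\begin{itemize}
\item the two $K_1$ cross terms contribute about $9.05$;
\item the two diagonal $K_1$ terms contribute about $1.19$;
\item the $K_2,K_3$ terms (only $(i,j)=(1,N-1)$ is nonzero) contribute about $-0.03$.
\end{itemize}
So $m_k\langle S[f]\rangle\approx 10.2$, whereas
\[
2^{\beta_1+1}(2^k-2)\,m_{\alpha_1+\beta_1+k-1}\langle f\rangle\,m_{\alpha_1+1}\langle f\rangle\approx 1.19\cdot 4.98\cdot 1.25\approx 7.4 .
\]
Your fallback is therefore the right repair. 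With $2^k-2$ replaced by $\max\{k,2^k-2\}$, your argument proves the estimate for all $k\ge 1$. This only changes the constant $\mathcal{C}_k$ (hence $\mathcal{B}_k$) in Proposition~\ref{prop:moment_est_for_O} and the results built on it.
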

	
	\begin{proof}
%		For \(k=0\), we compute
%		\begin{align*}
%			m_0\langle S[f]\rangle
%			&= \sum_{i=1}^{\infty} S_i[f] \\
%			&=
%			\sum_{i,j=1}^{\infty}
%			\Bigl[
%			K_1(\bfi,\bfj)\,f_i f_j
%			- K_2(\bfi,\bfj)\,f_i f_{i+j}
%			- K_3(\bfi,\bfj)\,f_j f_{i+j}
%			\Bigr]
%			\bigl[1-1-1\bigr] \\
%			&\le
%			\sum_{i,j=1}^{\infty}
%			\Bigl(
%			K_2(\bfi,\bfj)\,f_i f_{i+j}
%			+ K_3(\bfi,\bfj)\,f_i f_{i+j}
%			\Bigr),
%		\end{align*}
%		where we used the weak formulation with the test function \(\varphi\equiv 1\).
%		
%		Next, observe that
%		\begin{align*}
%			\sum_{i,j=1}^{\infty} K_2(\bfi,\bfj)\,f_i f_{i+j}
%			&=
%			\sum_{i=1}^{\infty}\sum_{j=i+1}^{\infty}
%			\bfi^{\alpha_2}(\bfj-\bfi)^{\alpha_2}\bfj^{\beta_2}\, f_i f_j \\
%			&\le
%			\sum_{i=1}^{\infty} \bfi^{\alpha_2} f_i
%			\sum_{j=1}^{\infty} \bfj^{\alpha_2+\beta_2} f_j
%			=
%			m_{\alpha_2}\langle f\rangle\,
%			m_{\alpha_2+\beta_2}\langle f\rangle .
%		\end{align*}
%		An analogous estimate holds for the term involving \(K_3\). Therefore,
%		\[
%		m_0\langle S[f]\rangle
%		\le
%		m_{\alpha_2}\langle f\rangle\,
%		m_{\alpha_2+\beta_2}\langle f\rangle
%		+
%		m_{\alpha_3}\langle f\rangle\,
%		m_{\alpha_3+\beta_3}\langle f\rangle .
%		\]
		
		For \(k=1\), recalling \eqref{eqn:moment_conservation_S}, we have
		\[
		m_1\langle S[f]\rangle = 0,
		\]
		which immediately implies \eqref{eqn:moment_k_estimation} for \(k=1\). 
		
		For \(k>1\), the terms involving \(K_2\) and \(K_3\) contribute non-positively.
		Hence,
		\[
		m_k\langle S[f]\rangle
		\le
		\sum_{i=1}^{\infty} S_i^{(1)}[f]\,\bfi^{k}.
		\]
		
		%\begin{equation}
		%    \begin{aligned}
			%        m_k\langle S[f]\rangle &\le \sum_{i=1}^\infty S^{(1)}_i[f] |\bfi|^k\\ 
			%        & \le \sum_{i,j=1}^\infty K_1(i,j) f_i f_j (|\bfi+\bfj|^k-|\bfi|^k -|\bfj|^k)\\
			%        & \le C_k \sum_{i,j=1}^\infty |\bfi|^{\alpha_1}|\bfj|^{\alpha_1}|\bfi+\bfj|^{\beta_1}  f_i f_j(|\bfi|^{k-1}|\bfj|+ |\bfi||\bfj|^{k-1})\\
			%        & \le 2C_k \sum_{i,j=1}^\infty |\bfi|^{\alpha_1}|\bfj|^{\alpha_1}|\bfi+\bfj|^{\beta_1}  f_i f_j|\bfi|^{k-1}|\bfj|\\
			%        & \le 2C'_k \sum_{i,j=1}^\infty |\bfi|^{\alpha_1}|\bfj|^{\alpha_1}(|\bfi|^{\beta_1}+|\bfj|^{\beta_1})  f_i f_j|\bfi|^{k-1}|\bfj|\\
			%        & = 2C'_k (m_{\alpha_1+\beta_1+k-1}\langle f\rangle m_{\alpha_1+1}\langle f\rangle +m_{\alpha_1+k-1} \langle f \rangle m_{\alpha_1+\beta_1+1}  \langle f \rangle  ).
			%    \end{aligned}
		%\end{equation}
		
		Note that for \(k>1\) and \(0\le x\le y\), the following inequality holds:
		\[
		(x+y)^k - x^k - y^k \le (2^k-2)\, x\, y^{k-1}.
		\]
		
		Using the weak formulation, we obtain
		\begin{align*}
			m_k\langle S[f]\rangle
			&\le \sum_{i=1}^{\infty} S_i^{(1)}[f]\,\bfi^k \\
			&\le
			\sum_{i,j=1}^{\infty}
			K_1(\bfi,\bfj)\, f_i f_j
			\bigl[(\bfi+\bfj)^k-\bfi^k-\bfj^k\bigr].
		\end{align*}
		We split the sum into the regions \(i\le j\) and \(j\le i\), and use the symmetry of
		the kernel:
		\begin{align*}
			m_k\langle S[f]\rangle
			\le {} &
			\sum_{i\le j}
			\bfi^{\alpha_1}\bfj^{\alpha_1}(\bfi+\bfj)^{\beta_1}
			f_i f_j
			\bigl[(\bfi+\bfj)^k-\bfi^k-\bfj^k\bigr] \\
			&+
			\sum_{j\le i}
			\bfi^{\alpha_1}\bfj^{\alpha_1}(\bfi+\bfj)^{\beta_1}
			f_i f_j
			\bigl[(\bfi+\bfj)^k-\bfi^k-\bfj^k\bigr].
		\end{align*}
		Applying the above inequality and the bound \(\bfi+\bfj\le 2\max\{\bfi,\bfj\}\),
		we deduce
		\begin{align*}
			m_k\langle S[f]\rangle
			\le {} &
			(2^k-2)\Bigg[
			\sum_{i\le j}
			\bfi^{\alpha_1}\bfj^{\alpha_1}(2\bfj)^{\beta_1}
			f_i f_j\,\bfi\,\bfj^{k-1} \\
			&\hspace{2.2cm}
			+\sum_{j\le i}
			\bfi^{\alpha_1}\bfj^{\alpha_1}(2\bfi)^{\beta_1}
			f_i f_j\,\bfj\,\bfi^{k-1}
			\Bigg].
		\end{align*}
		Combining the two sums yields
		\begin{align*}
			m_k\langle S[f]\rangle
			&\le
			2^{\beta_1+1}(2^k-2)
			\sum_{i,j=1}^{\infty}
			\bfi^{\alpha_1}\bfj^{\alpha_1+\beta_1}
			f_i f_j\,\bfi\,\bfj^{k-1} \\
			&=
			2^{\beta_1+1}(2^k-2)\,
			m_{\alpha_1+1}\langle f\rangle\,
			m_{\alpha_1+\beta_1+k-1}\langle f\rangle .
		\end{align*}
		This concludes the proof of \eqref{eqn:moment_k_estimation}.
		
	\end{proof}
	\begin{prop}\label{prop:moment_est_for_O}
		Let \(f:\mathbb{N}\to\mathbb{R}_{\ge 0}\) be such that all its moments are finite.
		Then for \(k>1\) and \(k>2-\alpha_1-\beta_1\), the following moment estimate holds:
		\begin{equation}\label{eqn:moment_estimate_for_m_O}
			m_k\langle O[f]\rangle
			= m_k\langle (S-V)[f]\rangle
			\le
			\mathcal{C}_k\, m_1^{\,1+\frac{\delta+k-1}{\delta-2\alpha_1-\beta_1+1}}
			-\frac{1}{2}\, m_{\delta+k}\langle f\rangle .
		\end{equation}
		
	\end{prop}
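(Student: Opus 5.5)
The estimate follows by combining the moment bound for $S$ from \eqref{eqn:moment_k_estimation} with the exact dissipation identity $m_k\langle V[f]\rangle = \sum_i f_i\,\bfi^{\delta}\bfi^{k} = m_{\delta+k}\langle f\rangle$. This gives
\[
m_k\langle O[f]\rangle \le 2^{\beta_1+1}(2^k-2)\, m_{\alpha_1+\beta_1+k-1}\langle f\rangle\, m_{\alpha_1+1}\langle f\rangle - m_{\delta+k}\langle f\rangle .
\]
The task is then to absorb the product term into $\tfrac12 m_{\delta+k}$, leaving only a power of $m_1$. The natural tool is the interpolation inequality \eqref{eqn:moment_interpolation}, with $a=1$ and $b=\delta+k$ in both factors.

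First I check the ranges. We need $1\le \alpha_1+1<\delta+k$, which holds since $\delta>\alpha_1$ and $k>1$. We also need $1\le \alpha_1+\beta_1+k-1<\delta+k$; the lower bound follows from $k>2-\alpha_1-\beta_1$, and the upper bound from $\delta>\alpha_1+\beta_1-1$, implied by \eqref{asp:para}. Endpoint equalities are trivial cases. Writing $b=\delta+k$ and $M=m_{\delta+k}\langle f\rangle$, interpolation gives $m_{\alpha_1+1}\le m_1^{(b-\alpha_1-1)/(b-1)}M^{\alpha_1/(b-1)}$ and $m_{\alpha_1+\beta_1+k-1}\le m_1^{(b-\alpha_1-\beta_1-k+1)/(b-1)}M^{(\alpha_1+\beta_1+k-2)/(b-1)}$. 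The product is therefore bounded by $m_1^{p}M^{\theta}$, where
\[
\theta=\frac{2\alpha_1+\beta_1+k-2}{\delta+k-1},\qquad p=2-\theta .
\]

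Next, $\theta<1$ is exactly equivalent to $\delta>2\alpha_1+\beta_1-1$, the second condition in \eqref{asp:para}. Young's inequality with exponents $1/\theta$ and $1/(1-\theta)$ then gives, for $A=2^{\beta_1+1}(2^k-2)$,
\[
A\,m_1^{p}M^{\theta}\le \tfrac12 M + C(A,\theta)\, m_1^{p/(1-\theta)} .
\]
It remains to confirm that the resulting exponent of $m_1$ is the one in \eqref{eqn:moment_estimate_for_m_O}. We have $1-\theta=(\delta-2\alpha_1-\beta_1+1)/(\delta+k-1)$ and $p=1+(1-\theta)$, so
\[
\frac{p}{1-\theta}=1+\frac{1}{1-\theta}=1+\frac{\delta+k-1}{\delta-2\alpha_1-\beta_1+1},
\]
which matches. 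The constant $\mathcal{C}_k$ is then the explicit Young constant $(1-\theta)\,\theta^{\theta/(1-\theta)}\,2^{\theta/(1-\theta)}A^{1/(1-\theta)}$, and this would be recorded as \eqref{eqn:def_for_C_k}.

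The main obstacle is bookkeeping rather than conceptual. We must verify that the interpolation exponents lie in the admissible ranges, which is where $k>2-\alpha_1-\beta_1$ and \eqref{asp:para} enter. We must also confirm $\theta<1$, since this is what allows the dissipation to dominate. The finiteness of all moments guarantees that the weak formulation and the interpolation steps are legitimate.
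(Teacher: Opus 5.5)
Your proposal is correct and follows the paper's proof essentially step by step. It combines \eqref{eqn:moment_k_estimation} with $m_k\langle V[f]\rangle=m_{\delta+k}\langle f\rangle$, interpolates both factors between $m_1$ and $m_{\delta+k}$, and applies Young's inequality, which is available because $\theta<1$, that is, $\delta>2\alpha_1+\beta_1-1$. Your explicit Young constant $(1-\theta)(2\theta)^{\theta/(1-\theta)}A^{1/(1-\theta)}$ coincides exactly with the paper's $\mathcal{C}_k$ in \eqref{eqn:def_for_C_k}.
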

	\begin{proof}
		Note that
		\[
		m_k\langle V[f]\rangle
		=
		\sum_{i=1}^{\infty} \bfi^{\delta} f_i\, \bfi^{k}
		=
		m_{\delta+k}\langle f\rangle .
		\]
		
		It then follows from \eqref{eqn:moment_k_estimation} that, for \(k>1\),
		\[
		m_k\langle (S-V)[f]\rangle
		\le
		2^{\beta_1+1}(2^k-2)\,
		m_{\alpha_1+\beta_1+k-1}\langle f\rangle\,
		m_{\alpha_1+1}\langle f\rangle
		-
		m_{\delta+k}\langle f\rangle .
		\]
		
		Applying the moment interpolation inequality \eqref{eqn:moment_interpolation}, since our assumption implies that \(1<\alpha_1+\beta_1+k-1<\delta+k\) and \(1<\alpha_1+1<\delta+k\),
		we obtain
		\[
		m_{\alpha_1+\beta_1+k-1}\langle f\rangle
		\le
		m_1^{\frac{\delta-\alpha_1-\beta_1+1}{\delta+k-1}}\,
		m_{\delta+k}^{\frac{\alpha_1+\beta_1+k-2}{\delta+k-1}},
		\]
		and
		\[
		m_{\alpha_1+1}\langle f\rangle
		\le
		m_1^{\frac{\delta+k-\alpha_1-1}{\delta+k-1}}\,
		m_{\delta+k}^{\frac{\alpha_1}{\delta+k-1}} .
		\]
		%\begin{align*}
		%    m_{\alpha_1+\beta_1+k-1} &\le m_1^{\frac{\delta -\alpha_1-\beta_1+1}{\delta+k-1}} m_{\delta+k}^{\frac{\alpha_1+\beta_1+k-2}{\delta+k-1}}    \\
		%    m_{\alpha_1+1} &\le m_1^{\frac{\delta +k -\alpha_1-1}{\delta+k-1}} m_{\delta+k}^{\frac{\alpha_1}{\delta+k-1}} 
		%    \\
		%    m_{\alpha_1+k-1} &\le m_1^{\frac{\delta -\alpha_1+1}{\delta+k-1}} m_{\delta+k}^{\frac{\alpha_1+k-2}{\delta+k-1}}   \\
		%    m_{\alpha_1+\beta_1+1} &\le m_1^{\frac{\delta +k -\alpha_1-\beta_1-1}{\delta+k-1}} m_{\delta+k}^{\frac{\alpha_1+\beta_1}{\delta+k-1}}
		%\end{align*}
		
		Thus,
		\[
		m_k\langle (S-V)[f]\rangle
		\le
		2^{\beta_1+1}(2^k-2)\,
		m_1^{\frac{2\delta+k-2\alpha_1-\beta_1}{\delta+k-1}}\,
		m_{\delta+k}^{\frac{2\alpha_1+\beta_1+k-2}{\delta+k-1}}
		-
		m_{\delta+k}.
		\]
		
		Since \(\delta>2\alpha_1+\beta_1-1\), we have
		\[
		\frac{2\alpha_1+\beta_1+k-2}{\delta+k-1}<1 .
		\]
		Therefore, Young’s inequality yields
		\begin{align*}
			m_k\langle (S-V)[f]\rangle
			&\le
			\mathcal{C}_k\,
			m_1^{\frac{2\delta+k-2\alpha_1-\beta_1}{\delta-2\alpha_1-\beta_1+1}}
			-\frac{1}{2}\, m_{\delta+k} \\
			&=
			\mathcal{C}_k\,
			m_1^{\,1+\frac{\delta+k-1}{\delta-2\alpha_1-\beta_1+1}}
			-\frac{1}{2}\, m_{\delta+k},
		\end{align*}
		where a direct computation shows that
		\begin{equation}\label{eqn:def_for_C_k}
			\mathcal{C}_k
			=
			\frac{\delta-2\alpha_1-\beta_1+1}{\delta+k-1}\,
			\bigl[2^{\beta_1+1}(2^k-2)\bigr]^{\frac{\delta+k-1}{\delta-2\alpha_1-\beta_1+1}}
			\left(
			\frac{2(2\alpha_1+\beta_1+k-2)}{\delta+k-1}
			\right)^{\frac{2\alpha_1+\beta_1+k-2}{\delta-2\alpha_1-\beta_1+1}} .
		\end{equation}
		
	\end{proof}
	%	\begin{remark}
		%		For the sake of completeness, we provide the explicit expression of \(\mathcal{C}_k\),
		%		as this constant will appear frequently in the results of the subsequent two sections.
		%		We note that \(\mathcal{C}_k\) exhibits super-exponential growth with respect to \(k\).
		%		
		%	\end{remark}
	\begin{remark}Suppose further that \(f\not\equiv 0\). Then, by the moment interpolation inequality
		\eqref{eqn:moment_interpolation}, and \(k>1\), we have
		\[
		m_k\langle f\rangle
		\le
		m_1^{\frac{\delta}{\delta+k-1}}\,
		m_{\delta+k}^{\frac{k-1}{\delta+k-1}} .
		\]
		
		Since \(m_1\langle f\rangle\neq 0\), it follows that
		\[
		m_{\delta+k}\langle f\rangle
		\ge
		m_1^{-\frac{\delta}{k-1}}\,
		m_k^{\frac{\delta+k-1}{k-1}} .
		\]
		Substituting this bound into \eqref{eqn:moment_estimate_for_m_O} yields
		\begin{equation}\label{eqn:moment_estimate_m_O_when_zero}
			m_k\langle (S-V)[f]\rangle
			\le
			\mathcal{C}_k\,
			m_1^{\,1+\frac{\delta+k-1}{\delta-2\alpha_1-\beta_1+1}}
			-\frac{1}{2}\,
			m_1^{-\frac{\delta}{k-1}}\,
			m_k^{\frac{\delta+k-1}{k-1}},
			\qquad f\not\equiv 0 .
		\end{equation}
		
	\end{remark}

	\begin{prop}
		Let \(f,g\in \ell^1(\mathbb{N})\) be nonnegative functions satisfying
		\[
		\max\!\left\{m_1\langle f\rangle,\; m_1\langle g\rangle\right\}
		\le \mathfrak{m}_1<\infty,
		\qquad
		\max\!\left\{m_{\kappa}\langle f\rangle,\; m_{\kappa}\langle g\rangle\right\}
		\le \mathfrak{m}_{\kappa}<\infty,
		\]
		for some \(\kappa>\delta\).
		Then there exists a constant \(C(\mathfrak{m}_1,\mathfrak{m}_{\kappa})>0\),
		depending only on \(\mathfrak{m}_1\) and \(\mathfrak{m}_{\kappa}\), such that
		\begin{equation}\label{eqn:Holder_estimate_pre_for_S}
			\|S[f]-S[g]\|_{\ell^1}
			\le
			C(\mathfrak{m}_1,\mathfrak{m}_{\kappa})\,\|f-g\|_{\ell^1}
			+
			m_\delta\langle |f-g|\rangle .
		\end{equation}
		
	\end{prop}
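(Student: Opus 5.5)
The plan is to exploit the quadratic structure of \(S\). Write \(S[f]=\mathcal{Q}(f,f)\), where \(\mathcal{Q}(u,v)\) is the bilinear form obtained by replacing each product \(f_af_b\) in \(S^{(1)},S^{(2)},S^{(3)}\) by \(u_av_b\). Then
\[
S[f]-S[g]=\mathcal{Q}(f-g,f)+\mathcal{Q}(g,f-g).
\]
Set \(w:=f-g\). It suffices to bound \(\sum_i|\mathcal{Q}_i(w,f)|\) and \(\sum_i|\mathcal{Q}_i(g,w)|\) term by term. For each term I put the absolute value inside the double sum and reindex so that it becomes a sum over pairs \((a,b)\in\mathbb{N}^2\) of \(K_n\) times \(|w_a|\) times \(f_b\) (or \(g_b\)). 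Typical cases:
\begin{itemize}
\item For \(\sum_i\sum_{j<i}K_1(\bfj,\bfi-\bfj)\,|w_j|\,f_{i-j}\), the pairs are \((j,l)\) with \(l=i-j\).
\item For \(\sum_i\sum_{j>i}K_2(\bfi,\bfj-\bfi)\,f_jf_{j-i}\), the pairs are \((i,l)\) with \(l=j-i\) and the product is \(f_{i+l}f_l\).
\end{itemize}
Each term is bounded separately, so no cancellation is needed.

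Next, give every term a weight estimate. From \(K_n(\bfa,\bfb)=\bfa^{\alpha_n}\bfb^{\alpha_n}(\bfa+\bfb)^{\beta_n}\le 2^{\beta_n}\bigl(\bfa^{\alpha_n+\beta_n}\bfb^{\alpha_n}+\bfa^{\alpha_n}\bfb^{\alpha_n+\beta_n}\bigr)\), each contribution is bounded by \(\sum_a|w_a|\,\bfa^{p}\) times a moment of \(f\) or \(g\) of order \(q\), where \(p,q\le\alpha_n+\beta_n\). The index shifts in \(S^{(2)},S^{(3)}\) need some care.
\begin{itemize}
\item When \(w\) sits at the index \(m=i+l\) (as in \(w_{i+l}f_l\)), use \(K_n(\bfi,\bfl)\le\bfm^{\alpha_n+\beta_n}\bfl^{\alpha_n}\) and sum over \(l\) first.
\item When \(w\) sits at \(l\) and \(g\) at \(i+l\), bound \(K_n\le\bfl^{\alpha_n}(\bfi+\bfl)^{\alpha_n+\beta_n}\). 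Then \(\sum_i g_{i+l}(\bfi+\bfl)^{\alpha_n+\beta_n}\le m_{\alpha_n+\beta_n}\langle g\rangle\).
\end{itemize}
All moments of \(f,g\) that appear have orders in \([0,\alpha_n+\beta_n]\subset[0,\kappa]\). They are controlled by \(\mathfrak{m}_1,\mathfrak{m}_\kappa\) via Remark~\ref{rmk:eat_the_term_by_constant}, with \(m_0\le h^{-1}m_1\) by Remark~\ref{rmk:high_moment_est_low_moment}. So all such factors are bounded by one constant \(M(\mathfrak{m}_1,\mathfrak{m}_\kappa)\).

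This yields
\[
\|S[f]-S[g]\|_{\ell^1}\le N\,M\sum_{n}\sum_a|w_a|\,\bfa^{\alpha_n+\beta_n}+(\text{lower powers}),
\]
with \(N\) a fixed combinatorial count of terms. Since \(\delta>\alpha_n+\beta_n\) by \eqref{asp:para}, Young's inequality \eqref{eqn:moment_interpolation_plus}, applied pointwise to \(|w|\), gives \(\bfa^{p}\le C_\varepsilon+\varepsilon\,\bfa^{\delta}\) for each \(p\le\alpha_n+\beta_n\). Choosing \(\varepsilon=(N M)^{-1}\) makes the total coefficient of \(m_\delta\langle|w|\rangle\) at most \(1\). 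The remainder is absorbed into \(C(\mathfrak{m}_1,\mathfrak{m}_\kappa)\|w\|_{\ell^1}\).

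The main obstacle is the bookkeeping for the shifted gain terms of \(S^{(2)}\) and \(S^{(3)}\): one must check that after reindexing, the power falling on \(w\) never exceeds \(\alpha_n+\beta_n\). The two cases above, depending on whether \(w\) sits at the larger index \(i+l\) or the smaller one, handle this. This exponent bound is exactly what allows the \(m_\delta\) term to carry coefficient \(1\) rather than a large constant.
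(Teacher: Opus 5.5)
Your proposal is correct and is essentially the paper's argument. The paper likewise splits $|f_af_b-g_ag_b|\le|f_a-g_a|f_b+g_a|f_b-g_b|$, bounds the kernels so that the power $\alpha_n+\beta_n$ lands on the larger index (which is exactly how you handle the shifted gain terms of $S^{(2)},S^{(3)}$), and then uses \eqref{eqn:moment_interpolation_plus} with small $\varepsilon$ and $\delta>\alpha_n+\beta_n$ to absorb everything into $C\|f-g\|_{\ell^1}+m_\delta\langle|f-g|\rangle$.

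The only difference is bookkeeping. The paper does all the reindexing at once by testing the weak formulation \eqref{eqn:weak_formulation} against a sign function $s$, at the cost of the factor $|s(i+j)-s(i)-s(j)|\le 3$; you reindex term by term instead. Your single global choice $\varepsilon=(NM)^{-1}$ also makes the final coefficient $1$ on $m_\delta$ come out more transparently than the paper's per-operator $\tfrac13$'s.
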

	\begin{proof}
		For such \(f\) and \(g\), we write
		\[
		\|S^{(1)}[f]-S^{(1)}[g]\|_{\ell^1}
		=
		\sum_{i=1}^{\infty}
		\bigl|S_i^{(1)}[f]-S_i^{(1)}[g]\bigr|
		=
		\sum_{i=1}^{\infty}
		\bigl(S_i^{(1)}[f]-S_i^{(1)}[g]\bigr)\, s(i),
		\]
		where \(s:\mathbb{N}\to\{-1,1\}\) is a sign function.
		
		Using the weak formulation \eqref{eqn:weak_formulation}, we obtain
		\begin{align*}
			\|S^{(1)}[f]-S^{(1)}[g]\|_{\ell^1}
			&=
			\sum_{i,j=1}^{\infty}
			K_1(\bfi,\bfj)\,
			\bigl(f_i f_j-g_i g_j\bigr)
			\bigl(s(i+j)-s(i)-s(j)\bigr) \\
			&\le
			3\sum_{i,j=1}^{\infty}
			K_1(\bfi,\bfj)\,
			\bigl|f_i f_j-g_i g_j\bigr|,
		\end{align*}
		where we used the bound
		\(|s(i+j)-s(i)-s(j)|\le 3\).
		
		%	\begin{align*}
			%		&\left\|S^{(1)}[f]-S^{(1)}[g]\right\|
			%		\le\sum_{i=1}^\infty \left|\sum_{j=1}^{i-1} K_1(j, i-j) f_j f_{i-j}- \sum_{j=1}^{i-1} K_1(j, i-j) g_j g_{i-j} \right|\\
			%		+&2\sum_{i=1}^\infty \left|-\sum_{j=1}^{\infty} K_1(i, j) f_i f_{j}+ \sum_{j=1}^{\infty} K_1(i, j) g_i g_{j} \right| \\
			%		\le & \sum_{i=1}^\infty \sum_{j=1}^{i-1} K_1(j, i-j) \left|f_j f_{i-j}-g_j g_{i-j} \right|+2\sum_{i=1}^\infty \sum_{j=1}^{\infty} K_1(i, j) \left|f_i f_{j}-g_i g_{j} \right| \\
			%		=& 3\sum_{k=1}^\infty\sum_{j=1}^{\infty} K_1(j, k) \left|f_j f_k-g_j g_k \right|,
			%	\end{align*}
		Since \(\delta>\alpha_1+\beta_1\), by Remark~\ref{rmk:eat_the_term_by_constant}
		and the interpolation estimate \eqref{eqn:moment_interpolation_plus}, we obtain
		\begin{equation}\label{eqn:holder_est_for_S1}
			\begin{aligned}
				\sum_{i=1}^{\infty}\sum_{j=1}^{\infty}
				K_1(\bfi,\bfj)\,\bigl|f_i f_j-g_i g_j\bigr|
				&\le
				\sum_{i=1}^{\infty}\sum_{j=1}^{\infty}
				\bfi^{\alpha_1}\bfj^{\alpha_1}
				\bigl(\bfi^{\beta_1}+\bfj^{\beta_1}\bigr)
				\bigl|f_i f_j-g_i g_j\bigr| \\
				&\le
				2\sum_{i=1}^{\infty}\sum_{j=1}^{\infty}
				\bfi^{\alpha_1}\bfj^{\alpha_1+\beta_1}
				\bigl(|f_i-g_i|\,|f_j|
				+|g_i|\,|f_j-g_j|\bigr) \\
				&=
				2\,m_{\alpha_1+\beta_1}\langle f\rangle\,
				m_{\alpha_1}\langle |f-g|\rangle
				+
				2\,m_{\alpha_1}\langle g\rangle\,
				m_{\alpha_1+\beta_1}\langle |f-g|\rangle \\
				&\le
				C_1(\mathfrak{m}_1,\mathfrak{m}_{\kappa})\,
				m_0\langle |f-g|\rangle
				+\frac{1}{3}\,m_\delta\langle |f-g|\rangle .
			\end{aligned}
		\end{equation}
		
		%	
		%	Here the last inequality follows from Remark \ref{rmk:eat_the_term_by_constant}.
		%	
		Similarly, using the weak formulation \eqref{eqn:weak_formulation}, there exists a sign
		function \(s:\mathbb{N}\to\{-1,1\}\) such that
		\begin{align*}
			\|S^{(2)}[f]-S^{(2)}[g]\|_{\ell^1}
			&=
			\sum_{i,j=1}^{\infty}
			-\,K_2(\bfi,\bfj)\,
			\bigl(f_i f_{i+j}-g_i g_{i+j}\bigr)
			\bigl(s(i+j)-s(i)-s(j)\bigr) \\
			&\le
			3\sum_{i,j=1}^{\infty}
			K_2(\bfi,\bfj)\,
			\bigl|f_i f_{i+j}-g_i g_{i+j}\bigr| \\
			&=
			3\sum_{i=1}^{\infty}\sum_{j=1}^{i-1}
			K_2(\bfj,\bfi-\bfj)\,
			\bigl|f_i f_j-g_i g_j\bigr| .
		\end{align*}
		
		%	
		%	\begin{align*}
			%		\left\|S^{(2)}[f]-S^{(2)}[g]\right\|&\le \sum_{i=1}^\infty \left|-\sum_{j=1}^{i-1} K_2(j, i-j) f_i f_{j}+ \sum_{j=1}^{i-1} K_2(j, i-j) g_i g_{j} \right|\\
			%		+&\sum_{i=1}^\infty \left|\sum_{j=i+1}^{\infty} K_2(j-i, i) f_j f_{j-i}- \sum_{j=i+1}^{\infty} K_2(j-i, i) g_j g _{j-i} \right|\\
			%		+&\sum_{i=1}^\infty \left|\sum_{j=1}^{\infty} K_2(i, j) f_i f_{i+j}- \sum_{j=1}^{\infty} K_2(i, j) g_i g _{i+j} \right|\\
			%		=& I_1+I_2+I_3.
			%	\end{align*}
		%	
		%	We have that
		%	\begin{align*}
			%		I_1=&\sum_{i=1}^\infty \left|-\sum_{j=1}^{i-1} K_2(j, i-j) f_i f_{j}+ \sum_{j=1}^{i-1} K_2(j, i-j) g_i g_{j} \right|\\
			%		\le & \sum_{i=1}^\infty \sum_{j=1}^{i-1} K_2(j, i-j) \left|f_i f_{j}-g_i g_{j} \right|\\
			%		= &\sum_{k=1}^\infty \sum_{j=1}^\infty K_2(j,k) \left|f_{k+j} f_{j}-g_{k+j} g_{j} \right|
			%	\end{align*}
		%	
		%	Also
		%	\begin{align*}
			%		I_2=&\sum_{i=1}^\infty \left|\sum_{j=i+1}^{\infty} K_2(j-i, i) f_j f_{j-i}- \sum_{j=i+1}^{\infty} K_2(j-i, i) g_j g _{j-i} \right|\\
			%		\le & \sum_{i=1}^\infty \sum_{j=i+1}^{\infty} K_2(j-i, i) \left|f_j f_{j-i}-g_j g_{j-i} \right|\\
			%		= & \sum_{i=1}^\infty \sum_{k=1}^\infty K_2(k,i)|f_{k+i}f_k-g_{k+i}g_k|
			%	\end{align*}
		%	\begin{align*}
			%		I_3=&\sum_{i=1}^\infty \left|\sum_{j=1}^{\infty} K_2(i, j) f_i f_{i+j}- \sum_{j=1}^{\infty} K_2(i, j) g_i g _{i+j} \right|\\
			%		\le & \sum_{i=1}^\infty \sum_{j=1}^\infty K_2(i,j)|f_{i+j}f_i-g_{i+j}g_i|
			%	\end{align*}
		Similarly, since \(\delta>\alpha_2+\beta_2\), we obtain
		\begin{equation}\label{eqn:holder_est_for_S2}
			\begin{aligned}
				\sum_{i=1}^{\infty}\sum_{j=1}^{i-1}
				K_2(\bfj,\bfi-\bfj)\,
				\bigl|f_i f_j-g_i g_j\bigr|
				&\le
				\sum_{i=1}^{\infty}\sum_{j=1}^{\infty}
				\bfj^{\alpha_2}\bfi^{\alpha_2+\beta_2}
				\bigl(|f_i-g_i|\,|f_j|
				+|f_j-g_j|\,|g_i|\bigr) \\
				&\le
				m_{\alpha_2+\beta_2}\langle f\rangle\,
				m_{\alpha_2}\langle |f-g|\rangle
				+
				m_{\alpha_2}\langle g\rangle\,
				m_{\alpha_2+\beta_2}\langle |f-g|\rangle \\
				&\le
				C_2(\mathfrak{m}_1,\mathfrak{m}_{\kappa})\,
				m_0\langle |f-g|\rangle
				+\frac{1}{3}\,m_\delta\langle |f-g|\rangle .
			\end{aligned}
		\end{equation}
		
		The same argument applies to \(S^{(3)}\), yielding
		\begin{equation}\label{eqn:holder_est_for_S3}
			\|S^{(3)}[f]-S^{(3)}[g]\|_{\ell^1}
			\le
			C_3(\mathfrak{m}_1,\mathfrak{m}_{\kappa})\,
			m_0\langle |f-g|\rangle
			+\frac{1}{3}\,m_\delta\langle |f-g|\rangle .
		\end{equation}
		
		Combining \eqref{eqn:holder_est_for_S1},
		\eqref{eqn:holder_est_for_S2}, and
		\eqref{eqn:holder_est_for_S3}, and applying the triangle inequality,
		we obtain \eqref{eqn:Holder_estimate_pre_for_S}.
		
	\end{proof}
	
	\begin{prop}[Holder Continuity]\label{prop:Holder_est}
		Let \(f,g\in \ell^1(\mathbb{N})\) be nonnegative functions satisfying
		\[
		\max\!\left\{m_1\langle f\rangle,\; m_1\langle g\rangle\right\}
		\le \mathfrak{m}_1<\infty,
		\qquad
		\max\!\left\{m_{\kappa}\langle f\rangle,\; m_{\kappa}\langle g\rangle\right\}
		\le \mathfrak{m}_{\kappa}<\infty,
		\]
		for some \(\kappa>\delta\).
		Then
		\[
		\|O[f]-O[g]\|_{\ell^1}
		\lesssim_{\mathfrak{m}_1,\mathfrak{m}_{\kappa}}
		\|f-g\|_{\ell^1}^{\frac{\kappa-\delta}{\kappa}}
		+\|f-g\|_{\ell^1}.
		\]
		
	\end{prop}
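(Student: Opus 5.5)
The plan is to write $O[f]-O[g]=(S[f]-S[g])-(V[f]-V[g])$ and treat the two pieces separately. For the first piece we invoke the preceding proposition, estimate \eqref{eqn:Holder_estimate_pre_for_S}, which gives
\[
\|S[f]-S[g]\|_{\ell^1}\le C(\mathfrak m_1,\mathfrak m_\kappa)\,\|f-g\|_{\ell^1}+m_\delta\langle|f-g|\rangle .
\]
For the dissipative part we have directly
\[
\|V[f]-V[g]\|_{\ell^1}=\sum_i \bfi^{\delta}|f_i-g_i| = m_\delta\langle |f-g|\rangle .
\]
Hence everything reduces to bounding the single quantity $m_\delta\langle |f-g|\rangle$ in terms of $\|f-g\|_{\ell^1}=m_0\langle|f-g|\rangle$.

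For this we apply the moment interpolation inequality \eqref{eqn:moment_interpolation} to the nonnegative function $|f-g|$, with $a=0$, $k=\delta$, $b=\kappa$. This is admissible since $0\le\delta<\kappa$; the degenerate case $\delta=0$ is trivial. It yields
\[
m_\delta\langle |f-g|\rangle\le m_0\langle|f-g|\rangle^{\frac{\kappa-\delta}{\kappa}}\, m_\kappa\langle|f-g|\rangle^{\frac{\delta}{\kappa}} .
\]
Since $|f_i-g_i|\le f_i+g_i$ by nonnegativity, we have $m_\kappa\langle|f-g|\rangle\le m_\kappa\langle f\rangle+m_\kappa\langle g\rangle\le 2\mathfrak m_\kappa$. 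Therefore
\[
m_\delta\langle|f-g|\rangle\le (2\mathfrak m_\kappa)^{\delta/\kappa}\,\|f-g\|_{\ell^1}^{\frac{\kappa-\delta}{\kappa}} .
\]

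Combining the three displays through the triangle inequality gives
\[
\|O[f]-O[g]\|_{\ell^1}\le C(\mathfrak m_1,\mathfrak m_\kappa)\|f-g\|_{\ell^1}+2(2\mathfrak m_\kappa)^{\delta/\kappa}\|f-g\|_{\ell^1}^{\frac{\kappa-\delta}{\kappa}},
\]
which is the claim.

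There is no genuine obstacle here. The only point needing care is that the term $m_\delta\langle|f-g|\rangle$ cannot be controlled linearly by the $\ell^1$ norm, because the weight $\bfi^\delta$ is unbounded. This is exactly why the exponent degrades to $(\kappa-\delta)/\kappa$, and why the hypothesis $\kappa>\delta$ is needed to make the interpolation available.
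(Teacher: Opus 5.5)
Your proof is correct and matches the paper's argument: split off $V$, invoke \eqref{eqn:Holder_estimate_pre_for_S}, and control the resulting two copies of $m_\delta\langle|f-g|\rangle$ by interpolating between $m_0$ and $m_\kappa$ of $|f-g|$, using $|f_i-g_i|\le f_i+g_i$. The only difference is cosmetic: you write the constant $2(2\mathfrak m_\kappa)^{\delta/\kappa}$ explicitly, and you note the degenerate case $\delta=0$.
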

	\begin{proof}
		Estimate \eqref{eqn:Holder_estimate_pre_for_S} yields
		\begin{align*}
			\|O[f]-O[g]\|_{\ell^1}
			&\le
			\|S[f]-S[g]\|_{\ell^1}
			+\|V[f]-V[g]\|_{\ell^1} \\
			&\le
			C(\mathfrak{m}_1,\mathfrak{m}_{\kappa})\,\|f-g\|_{\ell^1}
			+ m_\delta\langle |f-g|\rangle
			+\sum_{i=1}^{\infty} \bfi^{\delta}\,|f_i-g_i| \\
			&\le
			C(\mathfrak{m}_1,\mathfrak{m}_{\kappa})\,\|f-g\|_{\ell^1}
			+2\,m_\delta\langle |f-g|\rangle .
		\end{align*}
		
		Applying the moment interpolation inequality \eqref{eqn:moment_interpolation}, we further obtain
		\begin{align*}
			\|O[f]-O[g]\|_{\ell^1}
			&\le
			C(\mathfrak{m}_1,\mathfrak{m}_{\kappa})\,\|f-g\|_{\ell^1}
			+2\bigl(m_{\kappa}\langle f\rangle+m_{\kappa}\langle g\rangle\bigr)^{\frac{\delta}{\kappa}}
			\,m_0\langle |f-g|\rangle^{\frac{\kappa-\delta}{\kappa}} \\
			&\lesssim_{\mathfrak{m}_1,\mathfrak{m}_{\kappa}}
			\|f-g\|_{\ell^1}^{\frac{\kappa-\delta}{\kappa}}
			+\|f-g\|_{\ell^1}.
		\end{align*}
		
		%	\begin{equation}\label{eqn:holder_est_for_V}
			%		\begin{aligned}
				%			\left\|V[f]-V[g]\right\|_{\ell^1}&=\sum_{i=1}^\infty \left|V[f]-V[g]\right|= m_\delta \langle \left|f-g\right| \rangle\\
				%			&\le (m_{\kappa} \langle f \rangle+m_{\kappa} \langle g \rangle)^{\frac{\delta}{\kappa}}m_0\langle \left|f-g \right|\rangle^{\frac{\kappa-\delta}{\kappa}}  \\
				%			&\lesssim_{\mathfrak{m}_1,\mathfrak{m}_{\kappa}}\left\|f-g\right\|_{\ell^1}^{\frac{\kappa-\delta}{\kappa}}.
				%		\end{aligned}
			%	\end{equation}
		%	
		%	Note that
		%	\[\frac{\kappa-\delta}{\kappa}\le \frac{\kappa-\alpha_n-\beta_n}{\kappa},\quad\forall n\in \{1,2,3\},\]
		%	by \(\delta\ge \alpha_n+\beta_n\) for all \(n\). 
		%	
		%	
		%	Hence combining \eqref{eqn:holder_est_for_S1}, \eqref{eqn:holder_est_for_S2}, \eqref{eqn:holder_est_for_S3} and \eqref{eqn:holder_est_for_V}, and by Young's inequality, we have that
		%	\[\left\|O[f]-O[g]\right\|_{\ell^1}\lesssim_{\mathfrak{m}_1,\mathfrak{m}_{\kappa}}\left\|f-g\right\|_{\ell^1}^{\frac{\kappa-\delta}{\kappa}}+\left\|f-g\right\|_{\ell^1}.\]
	\end{proof}
	
	%\begin{remark}\label{rmk:moment_est_for_l1_diff_S}
	%	It can be observed from the proof that, for \(f,g\in\ell^1(\mathbb{N})\), we would have
	%	\begin{align*}
		%		\left\|S[f]-S[g]\right\|_{\ell^1}&\le 6 m_{\alpha_1+\beta_1}  \langle f \rangle m_{\alpha_1} \langle \left|f-g\right| \rangle+6 m_{\alpha_1}  \langle g \rangle m_{\alpha_1+\beta_1} \langle \left|f-g \right|\rangle\\
		%		&+3m_{\alpha_2+\beta_2} \langle f \rangle m_{\alpha_2} \langle \left|f-g\right| \rangle+3m_{\alpha_2} \langle f \rangle m_{\alpha_2+\beta_2} \langle \left|f-g \right|\rangle\\
		%		&+3m_{\alpha_3+\beta_3} \langle f \rangle m_{\alpha_3} \langle \left|f-g\right| \rangle+3m_{\alpha_3} \langle f \rangle m_{\alpha_3+\beta_3} \langle \left|f-g \right|\rangle.
		%	\end{align*}
	%\end{remark}
	
	\section{Well-posedness}
	
	In this section, we prove Theorems~\ref{thm:GWP},
	\ref{thm:Lipschitz_dependence}, and \ref{thm:energy_decay}.
	Before proving Theorem~\ref{thm:GWP}, we first establish the following proposition.

	\begin{prop}[Existence of Viable Set]\label{prop:existence_viable_set}
		Let \(\mathfrak{m}_1\ge 0\), let \(\kappa>\max\{\delta,1,2-\alpha_1-\beta_1\}\), and let \(\mathfrak{m}_{\kappa}\) satisfy
		\[
		\mathfrak{m}_{\kappa}\ge 2\,\mathcal{B}_{\kappa}(\mathfrak{m}_1).
		\]
		Define the subset
		\[
		\mathcal{S}
		:=
		\left\{
		f\in \ell^1(\mathbb{N}) :
		f\ge 0,\;
		m_1\langle f\rangle\le \mathfrak{m}_1,\;
		m_{\kappa}\langle f\rangle\le \mathfrak{m}_{\kappa}
		\right\}.
		\]
		Then \(\mathcal{S}\) satisfies the tangency condition
		\begin{equation}\label{eqn:sub_tangeny_condition}
			\forall\, f\in\mathcal{S},
			\qquad
			\liminf_{\tau\to 0^{+}}
			\frac{1}{\tau}\,
			\mathrm{dist}_{\ell^1}\bigl(f+\tau O[f],\,\mathcal{S}\bigr)
			=0 .
		\end{equation}
		
	\end{prop}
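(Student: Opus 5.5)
The plan is to verify \eqref{eqn:sub_tangeny_condition} by exhibiting, for each small \(\tau>0\), an explicit element \(h_\tau\in\mathcal{S}\) with \(\|f+\tau O[f]-h_\tau\|_{\ell^1}=o(\tau)\). The trivial case \(f\equiv 0\) is immediate, since \(O[0]=0\); so assume \(f\not\equiv 0\).

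The candidate is built from the truncation \(f^N:=f\,\mathbf 1_{\{i\le N\}}\) with \(N=N(\tau)\to\infty\), together with a damped tail. For \(f^N\), all moments are finite and the support of \(O[f^N]\) lies in \(\{1,\dots,2N\}\), so \(O[f^N]\) has finite moments of every order. The three constraints defining \(\mathcal S\) are checked one at a time for the Euler step \(g^N_\tau:=f^N+\tau O[f^N]\).

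\emph{Nonnegativity.} By \eqref{eqn:opt_gain_loss}, for \(i\le N\) we have \(O^-_i[f^N]\le c_N f_i\), with \(c_N\lesssim N^{\delta}+N^{\max(\alpha_n+\beta_n)}\,m_{\max(\alpha_n)}\langle f\rangle\). For \(i>N\) we have \(f^N_i=0\) and \(O_i[f^N]=O^+_i\ge 0\). Hence \(g^N_\tau\ge 0\) whenever \(\tau c_N\le 1\).

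\emph{First moment.} By \eqref{eqn:moment_conservation_S}, \(m_1\langle O[f^N]\rangle=-m_{1+\delta}\langle f^N\rangle\le 0\). Therefore \(m_1\langle g^N_\tau\rangle\le m_1\langle f^N\rangle\le \mathfrak m_1\).

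\emph{\(\kappa\)-th moment (barrier argument).} Apply \eqref{eqn:moment_estimate_m_O_when_zero} with \(k=\kappa\) and \(m_1=m_1\langle f^N\rangle\le\mathfrak m_1\). By the definition of \(\mathcal B_\kappa\) and its monotonicity in its argument, \(m_\kappa\langle O[f^N]\rangle<0\) as soon as \(m_\kappa\langle f^N\rangle>\mathcal B_\kappa(\mathfrak m_1)\); in that case \(m_\kappa\langle g^N_\tau\rangle\le m_\kappa\langle f^N\rangle\le\mathfrak m_\kappa\). If instead \(m_\kappa\langle f^N\rangle\le \mathcal B_\kappa(\mathfrak m_1)\), then the slack \(\mathfrak m_\kappa-\mathcal B_\kappa(\mathfrak m_1)\ge\mathcal B_\kappa(\mathfrak m_1)>0\) absorbs \(\tau\, m_\kappa\langle O^+[f^N]\rangle\), which is finite, for \(\tau\le\tau_N\). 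So \(g^N_\tau\in\mathcal S\) for \(\tau\le\tau_N\).

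\emph{Controlling the truncation error.} The comparison is
\[
\tfrac1\tau\|f+\tau O[f]-g^N_\tau\|_{\ell^1}\le \tfrac1\tau\|f-f^N\|_{\ell^1}+\|O[f]-O[f^N]\|_{\ell^1}.
\]
The second term tends to \(0\) as \(N\to\infty\) by Proposition~\ref{prop:Holder_est}, since \(m_1\) and \(m_\kappa\) of \(f^N\) are bounded by those of \(f\). The first term is the real problem: \(\|f-f^N\|/\tau\) need not be small for the \(N\) allowed by \(\tau\le\tau_N\).

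To fix this, I will not discard the tail but keep it in damped form. Set
\[
h_\tau:=g^N_\tau+\bigl(1-\tau N^{\delta}\bigr)_+\,\mathbf 1_{\{i>N\}}\,\xi_\tau\, f,
\]
with a factor \(\xi_\tau\in[0,1]\) to be chosen. Then \(f+\tau O[f]-h_\tau\) splits into three pieces:
\begin{enumerate}
\item \(\tau(O[f]-O[f^N])\), which is \(o(\tau)\) by Proposition~\ref{prop:Holder_est};
\item a tail piece bounded by \(\tau\sum_{i>N}(i^\delta+C)f_i\), which is \(o(\tau)\) because \(m_\delta\langle f\rangle<\infty\);
\item the defect \((1-\xi_\tau)\sum_{i>N}f_i\).
\end{enumerate}
Membership of \(h_\tau\) in \(\mathcal S\) follows by rerunning the three checks on \(f\) restricted to the tail, using that the tail only loses \(m_1\) and \(m_\kappa\).

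The main obstacle is this bookkeeping: coupling \(N=N(\tau)\) so that \(\tau c_N\to 0\), and choosing \(\xi_\tau\) so that the added tail does not break the \(m_\kappa\) barrier in the boundary case \(m_\kappa\langle f\rangle=\mathfrak m_\kappa\). In that case I would use the strict negativity \(m_\kappa\langle O[f]\rangle\le -c<0\), which holds because \(\mathfrak m_\kappa\ge2\mathcal B_\kappa\) gives a quantitative margin in \eqref{eqn:moment_estimate_m_O_when_zero}. This margin pays for the cross-interactions between \(f^N\) and the tail, and it is the step I expect to need the most care.
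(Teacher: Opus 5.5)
There is a genuine gap, and it is the step you flag yourself: you never show $m_\kappa\langle h_\tau\rangle\le\mathfrak m_\kappa$ for your final candidate. Your barrier check is done for $g^N_\tau=f^N+\tau O[f^N]$, with cases split according to $m_\kappa\langle f^N\rangle$. Once the tail is added back, the relevant quantity is essentially $m_\kappa\langle f\rangle+\tau\, m_\kappa\langle O[f^N]\rangle$. In the case $m_\kappa\langle f^N\rangle\le\mathcal B_\kappa(\mathfrak m_1)<m_\kappa\langle f\rangle$ (with possibly $m_\kappa\langle f\rangle=\mathfrak m_\kappa$), your slack argument no longer applies.

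The mechanism you propose instead does not work. You invoke a strict bound $m_\kappa\langle O[f]\rangle\le -c$ to pay for ``cross-interactions''. But for $f\in\mathcal S$ only $m_\kappa\langle f\rangle$ is known to be finite. Hence $m_\kappa\langle O[f]\rangle$, which involves $m_{\kappa+\delta}\langle f\rangle$ and $m_{\alpha_1+\beta_1+\kappa-1}\langle f\rangle$, need not be well defined. Moreover, \eqref{eqn:moment_estimate_m_O_when_zero} is only established for sequences with all moments finite. There are also no cross-interactions to pay for. Your $h_\tau$ involves only $O[f^N]$, and the interaction between $f^N$ and the tail enters solely through the error $\tau(O[f]-O[f^N])$. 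Proposition~\ref{prop:Holder_est} controls that error in $\ell^1$ with no moment bookkeeping.

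The missing idea is to take $\xi_\tau\equiv 1$ and drop the damping, i.e.\ use the paper's candidate $w_N:=f+\tau O[f^N]$. You keep the full $f$ as base point and truncate only inside the operator. Then $\frac1\tau\|f+\tau O[f]-w_N\|_{\ell^1}=\|O[f]-O[f^N]\|_{\ell^1}$, so the $\|f-f^N\|_{\ell^1}/\tau$ problem never arises. Nor is there any need to couple $N$ with $\tau$: fix $N$ (depending on $f$ and $\varepsilon$) first, then let $\tau\to0^+$. The three checks then go as follows.
\begin{enumerate}
\item Nonnegativity follows from $O^-[f^N]\le c_N f^N\le c_N f$, together with $O[f^N](i)=O^+[f^N](i)\ge 0$ for $i>N$.
\item The first moment follows from $m_1\langle O[f^N]\rangle=-m_{1+\delta}\langle f^N\rangle\le 0$.
\item For the $\kappa$-th moment, split according to $m_\kappa\langle f\rangle$, not $m_\kappa\langle f^N\rangle$. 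If $m_\kappa\langle f\rangle\le\mathcal B_\kappa(\mathfrak m_1)$, the slack $\mathfrak m_\kappa-\mathcal B_\kappa(\mathfrak m_1)\ge\mathcal B_\kappa(\mathfrak m_1)$ absorbs the uniform bound $\tau\, m_\kappa\langle O[f^N]\rangle\le\tau\,\mathcal C_\kappa\mathfrak m_1^{1+\frac{\delta+\kappa-1}{\delta-2\alpha_1-\beta_1+1}}$ for small $\tau$. If $m_\kappa\langle f\rangle>\mathcal B_\kappa(\mathfrak m_1)$, monotone convergence gives $m_\kappa\langle f^N\rangle>\mathcal B_\kappa(\mathfrak m_1)$ for $N$ large. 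The barrier then gives $m_\kappa\langle O[f^N]\rangle<0$, hence $m_\kappa\langle w_N\rangle<m_\kappa\langle f\rangle\le\mathfrak m_\kappa$.
\end{enumerate}
In the boundary case only non-positivity of $m_\kappa\langle O[f^N]\rangle$ is needed, not a quantitative margin.
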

	\begin{proof}
		Since \eqref{eqn:sub_tangeny_condition} is trivial for \(f\equiv0\), we henceforth
		restrict attention to the case \(f\not\equiv 0\).
		Fix \(0\not\equiv f\in\mathcal{S}\), and define the truncated sequence
		\[
		f_N := (f_i\,\mathbf{1}_{\{i\le N\}})_{i\in\mathbb{N}},
		\qquad N\in\mathbb{N}.
		\]
		There exists \(N_1\in\mathbb{N}\) such that \(f_N\not\equiv 0\) for all \(N\ge N_1\).
		
		For such \(N\), define
		\[
		w_N := f + \tau\, O[f_N],
		\]
		and our goal is to show that \(w_N\in\mathcal{S}\) for \(N\) sufficiently large
		and \(\tau>0\) sufficiently small. 
		%		The precise choice of
		%		\(\mathfrak{m}_{\kappa}\) will be specified later.
		
		Since \(f_N\) has finite support, it is clear that \(O[f_N]\in\ell^1(\mathbb{N})\),
		and therefore \(w_N\in\ell^1(\mathbb{N})\).
		
		We recall the gain--loss decomposition in \eqref{eqn:opt_gain_loss},
		\[
		O[f] = O^+[f] - O^-[f],
		\]
		where
		\begin{align*}
			O^-[f](i)
			&= 2\sum_{j=1}^{\infty} K_1(\bfi,\bfj)\,f_if_j
			+ \sum_{j=1}^{i-1} K_2(\bfi-\bfj,\bfj)\,f_if_j
			+ \sum_{j=1}^{i-1} K_3(\bfi-\bfj,\bfj)\,f_if_j
			+ \bfi^{\delta}f_i \\
			&\le
			\left[C
			\left(\bfi^{\alpha_1+\beta_1} m_{\alpha_1}\langle f\rangle
			+ \bfi^{\alpha_1} m_{\alpha_1+\beta_1}\langle f\rangle
			+ \bfi^{\alpha_2+\beta_2} m_{\alpha_2}\langle f\rangle
			+ \bfi^{\alpha_3+\beta_3} m_{\alpha_3}\langle f\rangle\right)
			+ \bfi^{\delta}\right] f_i,
		\end{align*}
		for some constant \(C>0\).
		
		Since \(0\le f_N\le f\), we have
		\[
		m_k\langle f_N\rangle \le m_k\langle f\rangle
		\le C(\mathfrak{m}_1,\mathfrak{m}_{\kappa}),
		\]
		for all relevant \(k\).
		Moreover, for \(i\le N\), we have \(\bfi\le Nh\). Consequently, for all \(i\),
		\begin{align*}
			O^-[f_N](i)
			&\le
			\left[C(\mathfrak{m}_1,\mathfrak{m}_{\kappa})
			\Bigl(
			(Nh)^{\alpha_1+\beta_1}
			+ (Nh)^{\alpha_1}
			+ (Nh)^{\alpha_2+\beta_2}
			+ (Nh)^{\alpha_3+\beta_3}
			\Bigr)
			+ (Nh)^{\delta}\right]f_N(i) \\
			&\le
			C(\mathfrak{m}_1,\mathfrak{m}_{\kappa},h,N) f_N(i).
		\end{align*}
		
		Since
		\[
		w_N = f + \tau\, O[f_N]
		\ge f - \tau\, O^-[f_N]
		= f\bigl(1-\tau\,C(\mathfrak{m}_1,\mathfrak{m}_{\kappa},h,N)\bigr),
		\]
		it follows that, for \(\tau>0\) sufficiently small, depending on
		\(\mathfrak{m}_1\), \(\mathfrak{m}_{\kappa}\), \(h\), and \(N\), we have
		\[
		w_N \ge 0 .
		\]
		
		Next, since \(f_N\) has finite support, all its moments are finite. By the
		conservation of the first moment for the operator \(S\)
		(cf. \eqref{eqn:moment_conservation_S}), we obtain
		\[
		\sum_{i=1}^{\infty} S[f_N](i)\,\bfi = 0 .
		\]
		Consequently,
		\[m_1\langle w_N\rangle= \sum_{i=1}^{\infty}
		\bigl(f_i - \tau\,\bfi^{\delta} f_N(i)\bigr)\,\bfi\le \sum_{i=1}^{\infty} f_i\,\bfi
		\le \mathfrak{m}_1 .\]

		%	Then by Proposition \ref{prop:moment_est_for_O}, by taking \(k=\kappa
		%	\), we have that,
		%	$$m_{k}  \langle O[f_N] \rangle 
		%	\le C_{k}'' m_1\left\langle f_N\right\rangle^{\frac{2\delta + k-2\alpha_1-\beta_1}{\delta-2\alpha_1-\beta_1+1}}-\frac{1}{2} m_{\delta+k}\left\langle f_N\right\rangle$$
		%	
		%	By taking \(k=\delta+1\), we have that
		%	$$m_{\kappa}  \langle O[f_N] \rangle \le C''_{\kappa} m_1\left\langle f_N\right\rangle^{\frac{2\delta+\kappa -2\alpha_1-\beta_1}{\delta-2\alpha_1-\beta_1+1}}-\frac{1}{2} m_{\delta+\kappa}\left\langle f_N\right\rangle.$$
		%	
		%	Note that
		%	$$m_{\kappa}\left\langle f_N\right\rangle \le m_1\left\langle f_N\right\rangle^\frac{\delta+\kappa-\kappa}{\delta+\kappa-1}m_{\delta+\kappa}\left\langle f_N\right\rangle^\frac{\kappa-1}{\delta+\kappa-1}, $$
		%	then since \(f_N\ne0\), we have that \(m_1\left\langle f_N\right\rangle\ne 0\), thus
		%	$$m_{\delta+\kappa}\left\langle f_N\right\rangle \ge m_1\left\langle f_N\right\rangle^{-\frac{\delta}{\kappa-1}} m_{\kappa}\left\langle f_N\right\rangle^{\frac{\delta+\kappa-1}{\kappa-1}}.$$
		%	
		
		Since \(f_N\not\equiv 0\), by \(\kappa\ge \max\{1,2-\alpha_1-\beta_1\}\), applying \eqref{eqn:moment_estimate_m_O_when_zero} with
		\(k=\kappa\) yields
		\begin{equation}\label{eqn:diff_inequality_for_moments}
			\begin{aligned}
				m_{\kappa}\langle O[f_N]\rangle
				&\le
				\mathcal{C}_{\kappa}\,
				m_1\langle f_N\rangle^{\,1+\frac{\delta+\kappa-1}{\delta-2\alpha_1-\beta_1+1}}
				-\frac{1}{2}\,
				m_1\langle f_N\rangle^{-\frac{\delta}{\kappa-1}}\,
				m_{\kappa}\langle f_N\rangle^{\frac{\delta+\kappa-1}{\kappa-1}} \\
				&\le
				\mathcal{C}_{\kappa}\,
				\mathfrak{m}_1^{\,1+\frac{\delta+\kappa-1}{\delta-2\alpha_1-\beta_1+1}}
				-\frac{1}{2}\,
				\mathfrak{m}_1^{-\frac{\delta}{\kappa-1}}\,
				m_{\kappa}\langle f_N\rangle^{\frac{\delta+\kappa-1}{\kappa-1}} \\
				&=
				\mathfrak{m}_1^{-\frac{\delta}{\kappa-1}}
				\left(
				\mathcal{C}_{\kappa}\,
				\mathfrak{m}_1^{\,1+\frac{\delta+\kappa-1}{\delta-2\alpha_1-\beta_1+1}
					+\frac{\delta}{\kappa-1}}
				-\frac{1}{2}\,
				m_{\kappa}\langle f_N\rangle^{\frac{\delta+\kappa-1}{\kappa-1}}
				\right).
			\end{aligned}
		\end{equation}
		
		Define the function
		\[
		F(m_{\kappa})
		:=
		\mathcal{C}_{\kappa}\,
		\mathfrak{m}_1^{\,1+\frac{\delta+\kappa-1}{\delta-2\alpha_1-\beta_1+1}
			+\frac{\delta}{\kappa-1}}
		-\frac{1}{2}\,
		m_{\kappa}^{\frac{\delta+\kappa-1}{\kappa-1}} .
		\]
		Then \(F(m_{\kappa})<0\) whenever
		\[
		m_{\kappa}\langle f_N\rangle
		>
		\left(
		2\,\mathcal{C}_{\kappa}\,
		\mathfrak{m}_1^{\,1+\frac{\delta+\kappa-1}{\delta-2\alpha_1-\beta_1+1}
			+\frac{\delta}{\kappa-1}}
		\right)^{\frac{\kappa-1}{\delta+\kappa-1}}
		=
		\mathcal{B}_{\kappa}(\mathfrak{m}_1).
		\]
		
		In particular, we always have the uniform upper bound
		\[
		m_{\kappa}\langle O[f_N]\rangle
		\le
		\mathcal{C}_{\kappa}\,
		\mathfrak{m}_1^{\,1+\frac{\delta+\kappa-1}{\delta-2\alpha_1-\beta_1+1}} .
		\]

		%	Then we can define \(\bar{\mathfrak{m}}_{\kappa}\) as
		%	$$\bar{\mathfrak{m}}_{\kappa} := 2\left( 2\mathcal{C}_{\kappa} \mathfrak{m}_1^{1+\frac{\delta+\kappa-1 }{\delta-2\alpha_1-\beta_1+1}+\frac{\delta}{\kappa-1}}\right)^{\frac{\kappa-1}{\delta+\kappa-1}}., $$
		%	which is only depending on \(\mathfrak{m}_{1}\), then 
		
		Let \(\mathfrak{m}_{\kappa}\ge 2\,\mathcal{B}_{\kappa}(\mathfrak{m}_1)\). We distinguish two cases.
		
		\begin{enumerate}
			\item Suppose that \(m_{\kappa}\langle f\rangle \le \mathcal{B}_{\kappa}(\mathfrak{m}_1)\). Then
			\begin{align*}
				m_{\kappa}\langle w_N\rangle
				&=
				\sum_{i=1}^{\infty}
				\bigl(f_i+\tau\,O[f_N](i)\bigr)\,\bfi^{\kappa} \\
				&\le
				\mathcal{B}_{\kappa}(\mathfrak{m}_1)
				+\tau\,\mathcal{C}_{\kappa}\,
				\mathfrak{m}_1^{\,1+\frac{\delta+\kappa-1}{\delta-2\alpha_1-\beta_1+1}} \\
				&\le
				2\,\mathcal{B}_{\kappa}(\mathfrak{m}_1)
				\le
				\mathfrak{m}_{\kappa},
			\end{align*}
			for \(\tau>0\) sufficiently small.
			
			\item Suppose that \(m_{\kappa}\langle f\rangle > \mathcal{B}_{\kappa}(\mathfrak{m}_1)\). Then there exists
			\(N_2\in\mathbb{N}\) such that, for all \(N>N_2\),
			\[
			m_{\kappa}\langle f_N\rangle>\mathcal{B}_{\kappa}(\mathfrak{m}_1).
			\]
			Consequently, \(m_{\kappa}\langle O[f_N]\rangle<0\). Since \(f\in\mathcal{S}\), we obtain
			\begin{align*}
				m_{\kappa}\langle w_N\rangle
				&=
				\sum_{i=1}^{\infty}
				\bigl(f_i+\tau\,O[f_N](i)\bigr)\,\bfi^{\kappa}
				<
				\sum_{i=1}^{\infty} f_i\,\bfi^{\kappa} \\
				&=
				m_{\kappa}\langle f\rangle
				\le
				\mathfrak{m}_{\kappa}.
			\end{align*}
		\end{enumerate}
		
		In both cases, we conclude that
		\[
		m_{\kappa}\langle w_N\rangle \le \mathfrak{m}_{\kappa}.
		\]
		Therefore, for \(N>\max\{N_1,N_2\}\) and for \(\tau>0\) sufficiently small (depending on
		\(\mathfrak{m}_1\), \(\mathfrak{m}_{\kappa}\), \(h\), and \(N\)), we have
		\[
		w_N = f+\tau O[f_N]\in\mathcal{S}.
		\]
		
		Finally, given any \(\varepsilon>0\), there exists \(N_3\in\mathbb{N}\) such that
		\(\|f-f_N\|_{\ell^1}<\varepsilon\) for all \(N>N_3\). For
		\(N>\max\{N_1,N_2,N_3\}\) and \(\tau>0\) sufficiently small, by Proposition~\ref{prop:Holder_est},
		\begin{align*}
			\frac{1}{\tau}\,
			\mathrm{dist}_{\ell^1}\bigl(f+\tau O[f],\mathcal{S}\bigr)
			&\le
			\frac{1}{\tau}\,
			\bigl\|f+\tau O[f]-f-\tau O[f_N]\bigr\|_{\ell^1} \\
			&\le
			\|O[f]-O[f_N]\|_{\ell^1} \\
			&\le
			C(\mathfrak{m}_1,\mathfrak{m}_{\kappa})
			\bigl(\varepsilon^{\frac{\kappa-\delta}{\kappa}}+\varepsilon\bigr).
		\end{align*}
		Since \(\varepsilon>0\) is arbitrary, we conclude that
		\[
		\liminf_{\tau\to 0^{+}}
		\frac{1}{\tau}\,
		\mathrm{dist}_{\ell^1}\bigl(f+\tau O[f],\mathcal{S}\bigr)
		=0,
		\qquad
		\forall\, f\in\mathcal{S}.
		\]
	\end{proof}

	\subsection{Proof of Theorem \ref{thm:GWP}}
	\label{SecTh1}
	Since \(m_{\kappa}\langle f_0\rangle<\infty\), Remark~\ref{rmk:high_moment_est_low_moment}
	implies that \(m_1\langle f_0\rangle<\infty\).
	We set
	\[
	\mathfrak{m}_1 := m_1\langle f_0\rangle .
	\]
	By Proposition~\ref{prop:existence_viable_set}, choosing
	\[
	\mathfrak{m}_{\kappa}
	:=
	\max\!\left\{
	m_{\kappa}\langle f_0\rangle,\;
	2\,\mathcal{B}_{\kappa}(\mathfrak{m}_1)
	\right\}
	\ge
	2\,\mathcal{B}_{\kappa}(\mathfrak{m}_1),
	\]
	the set
	\[
	\mathcal{S}
	=
	\left\{
	f\in \ell^1(\mathbb{N}) :
	f\ge 0,\;
	m_1\langle f\rangle\le \mathfrak{m}_1,\;
	m_{\kappa}\langle f\rangle\le \mathfrak{m}_{\kappa}
	\right\}
	\]
	satisfies the tangency condition \eqref{eqn:sub_tangeny_condition}.
	
	Moreover, Proposition~\ref{prop:Holder_est} establishes the Hölder continuity of
	\(O\). In particular, we have
	\[
	C := \sup_{f\in\mathcal{S}} \|O[f]\|_{\ell^1} < \infty .
	\]
	
	Fix any \(\varepsilon\in(0,1)\). The Hölder continuity of \(O\) further implies
	that there exists \(\tau_0>0\) such that, for any \(g_1,g_2\in\mathcal{S}\) with
	\(\|g_1-g_2\|_{\ell^1} < (C+1)\tau_0\), we have
	\[
	\|O[g_1]-O[g_2]\|_{\ell^1} < \varepsilon .
	\]
	
	Then, for any \(f_1\in\mathcal{S}\), the tangency condition
	\eqref{eqn:sub_tangeny_condition} guarantees the existence of
	\(0<\tau<\tau_0\) and \(f_2\in\mathcal{S}\) such that
	\begin{equation}\label{eqn:tangency_middle_step}
		\frac{1}{\tau}\,
		\bigl\|f_1 - f_2 + \tau O[f_1]\bigr\|_{\ell^1}
		< \varepsilon .
	\end{equation}
	
	It follows from \eqref{eqn:tangency_middle_step} that
	\[
	\|f_1-f_2\|_{\ell^1}
	<
	\bigl(\|O[f_1]\|_{\ell^1}+\varepsilon\bigr)\tau
	<
	(C+1)\tau_0,
	\]
	and therefore
	\[
	\|O[f_1]-O[f_2]\|_{\ell^1}<\varepsilon .
	\]
	
	For \(t\in[0,\tau]\), define
	\[
	f(t):=f_1+\frac{t}{\tau}(f_2-f_1).
	\]
	A direct computation shows that, for all \(t\in[0,\tau]\),
	\begin{equation}\label{eqn:piecewise_construction}
		f(t)\in\mathcal{S},\qquad
		\left\|\frac{d}{dt}f(t)-O[f(t)]\right\|_{\ell^1}\le 2\varepsilon,
		\qquad
		f(t)\ \text{is Lipschitz with constant } C+1 .
	\end{equation}
	
	Fix \(T>0\). For any \(\varepsilon\in(0,1)\), we construct a piecewise affine
	approximate solution \(f^\varepsilon(t)\) on \([0,T]\) as follows.
	
	Starting from the initial data \(f_0\in\mathcal{S}\), the previous construction
	yields an affine function \(f^\varepsilon(t)\) on \([0,\tau_1]\) for some
	\(\tau_1>0\), such that \(f^\varepsilon(0)=f_0\) and
	\eqref{eqn:piecewise_construction} holds.
	By iterating this procedure, we extend \(f^\varepsilon\) to a larger interval
	\([0,\tau_2]\) with \(\tau_2>\tau_1\), while preserving
	\eqref{eqn:piecewise_construction}.
	To continue this process, we employ a transfinite induction argument.
	
	Suppose that for some countable ordinal \(\alpha\), a piecewise affine
	approximate solution \(f^\varepsilon\) has been constructed on
	\([0,\tau_\alpha]\) with \(\tau_\alpha\le T\).
	\begin{enumerate}
		\item If \(\alpha\) is a successor ordinal, we extend \(f^\varepsilon\) from
		\([0,\tau_{\alpha-1}]\) to \([0,\tau_\alpha]\) using the above construction.
		\item If \(\alpha\) is a limit ordinal, we define
		\(\tau_\alpha:=\sup_{\gamma<\alpha}\tau_\gamma\) and set
		\(f^\varepsilon(\tau_\alpha)
		=\lim_{t\to\tau_\alpha^-}f^\varepsilon(t)\).
		The limit exists due to the Lipschitz continuity of \(f^\varepsilon\).
	\end{enumerate}
	Since the sequence \((\tau_\alpha)\) is strictly increasing, this transfinite
	procedure terminates at some countable ordinal \(\alpha^*\) such that
	\(\tau_{\alpha^*}=T\).
	Thus, we obtain an approximate solution on \([0,T]\).
	By construction, \(f^\varepsilon\) is Lipschitz continuous and differentiable
	almost everywhere.
	
	Next, consider a sequence of approximate solutions \(f^\varepsilon\) with
	\(\varepsilon\to 0\).
	For any two approximate solutions \(f^{\varepsilon_1}\) and
	\(f^{\varepsilon_2}\) with \(\varepsilon_1,\varepsilon_2<\varepsilon/4\), the map
	\(t\mapsto\|f^{\varepsilon_1}(t)-f^{\varepsilon_2}(t)\|_{\ell^1}\) is
	differentiable for almost every \(t\in[0,T]\).
	For such \(t\), we compute
	\begin{equation}\label{eqn:one_side_lipschitz_estimate_argument}
		\begin{aligned}
			\frac{d}{dt}\|f^{\varepsilon_1}(t)-f^{\varepsilon_2}(t)\|_{\ell^1}
			&=
			\sum_{i=1}^\infty
			\bigl(\dot f_i^{\varepsilon_1}(t)-\dot f_i^{\varepsilon_2}(t)\bigr)
			\operatorname{sgn}\bigl(f_i^{\varepsilon_1}(t)-f_i^{\varepsilon_2}(t)\bigr) \\
			&\le
			\sum_{i=1}^\infty
			\bigl(O[f^{\varepsilon_1}](i)-O[f^{\varepsilon_2}](i)\bigr)
			\operatorname{sgn}\bigl(f_i^{\varepsilon_1}-f_i^{\varepsilon_2}\bigr)
			+2(\varepsilon_1+\varepsilon_2) \\
			&\le
			\sum_{i=1}^\infty
			|S[f^{\varepsilon_1}](i)-S[f^{\varepsilon_2}](i)|
			-\sum_{i=1}^\infty
			|f_i^{\varepsilon_1}-f_i^{\varepsilon_2}|\bfi^\delta
			+\varepsilon .
		\end{aligned}
	\end{equation}
	
	Applying estimate \eqref{eqn:Holder_estimate_pre_for_S}, we obtain
	\[
	\frac{d}{dt}
	\|f^{\varepsilon_1}(t)-f^{\varepsilon_2}(t)\|_{\ell^1}
	\le
	C(\mathfrak{m}_1,\mathfrak{m}_\kappa)
	\|f^{\varepsilon_1}(t)-f^{\varepsilon_2}(t)\|_{\ell^1}
	+\varepsilon .
	\]
	By Gr\"onwall’s inequality,
	\[
	\|f^{\varepsilon_1}(t)-f^{\varepsilon_2}(t)\|_{\ell^1}
	\le
	\varepsilon\,\frac{e^{Ct}}{C},
	\qquad
	C=C(\mathfrak{m}_1,\mathfrak{m}_\kappa).
	\]
	
	This shows that \(\{f^\varepsilon\}\) is a Cauchy sequence in
	\(C([0,T];\ell^1)\).
	Hence \(f^\varepsilon\to f\) uniformly on \([0,T]\).
	By continuity of \(O\), the limit \(f\) is a \(C^1\) solution of
	\eqref{eqn:discrete_pde}.
	Since \(T>0\) is arbitrary, the solution exists globally and satisfies
	\(f(t)\in\mathcal{S}\) for all \(t\in[0,\infty)\).

	\subsection{Proof of Theorem \ref{thm:Lipschitz_dependence}.}
	\label{SecTh2}
	Proceeding as in \eqref{eqn:one_side_lipschitz_estimate_argument}, we compute
	\begin{align*}
		\frac{d}{dt}\|f(t)-g(t)\|_{\ell^1}
		&=
		\sum_{i=1}^\infty
		\bigl(\dot f_i(t)-\dot g_i(t)\bigr)
		\operatorname{sgn}\bigl(f_i(t)-g_i(t)\bigr) \\
		&\le
		\sum_{i=1}^\infty
		\bigl(O[f(t)](i)-O[g(t)](i)\bigr)
		\operatorname{sgn}\bigl(f_i(t)-g_i(t)\bigr)\\
		&\le
		\sum_{i=1}^\infty
		|S[f(t)](i)-S[g(t)](i)|
		-\sum_{i=1}^\infty
		|f_i(t)-g_i(t)|\bfi^\delta.
	\end{align*}
	%	\begin{align*}
		%		\frac{d}{dt}\|f(t)-g(t)\|_{\ell^1}
		%		&=
		%		\lim_{h\to 0}
		%		\frac{\|(f(t)-g(t))+h(\dot f(t)-\dot g(t))\|_{\ell^1}
			%			-\|f(t)-g(t)\|_{\ell^1}}{h} \\
		%		&=
		%		\lim_{h\to 0}
		%		\frac{\|(f(t)-g(t))+h(O[f(t)]-O[g(t)])\|_{\ell^1}
			%			-\|f(t)-g(t)\|_{\ell^1}}{h} \\
		%		&=
		%		\lim_{h\to 0}
		%		\frac{1}{h}
		%		\sum_{i=1}^\infty
		%		\Bigl(
		%		\bigl|(f_i(t)-g_i(t))+h(O[f(t)](i)-O[g(t)](i))\bigr|
		%		-\bigl|f_i(t)-g_i(t)\bigr|
		%		\Bigr) \\
		%		&\le
		%		\sum_{i=1}^\infty
		%		\bigl(O[f(t)](i)-O[g(t)](i)\bigr)
		%		\operatorname{sgn}\!\bigl(f_i(t)-g_i(t)\bigr) \\
		%		&\le
		%		\sum_{i=1}^\infty
		%		\bigl|S[f(t)](i)-S[g(t)](i)\bigr|
		%		-
		%		\sum_{i=1}^\infty
		%		|f_i(t)-g_i(t)|\,\bfi^\delta .
		%	\end{align*}
	
	Applying estimate \eqref{eqn:Holder_estimate_pre_for_S}, we obtain
	\[
	\frac{d}{dt}\|f(t)-g(t)\|_{\ell^1}
	\le
	C\,
	\|f(t)-g(t)\|_{\ell^1},
	\]
	where the constant \(C>0\) depends only on
	\(\max\left\lbrace  m_1\langle f_0\rangle,m_1\langle g_0\rangle\right\rbrace\) and 
	\(\max\left\lbrace m_{\kappa}\langle f_0\rangle, m_{\kappa}\langle g_0\rangle\right\rbrace\).
	Therefore, by Gr\"onwall’s inequality, for all \(t\in[0,\infty)\),
	\[
	\|f(t)-g(t)\|_{\ell^1}
	\le
	e^{Ct}
	\|f_0-g_0\|_{\ell^1}.
	\]
	
	\subsection{Proof of Theorem \ref{thm:energy_decay}}		\label{SecTh3}
	
	We consider the truncated initial data
	\[
	f_0^{(N)}(i) := f_0(i)\,\mathbf{1}_{\{i\le N\}} .
	\]
	Let \(f^{(N)}(t)\) denote the solution obtained from Theorem~\ref{thm:GWP}
	with initial data \(f_0^{(N)}\).
	Since \(f_0^{(N)}\) has finite support, all moments of \(f^{(N)}(t)\) are finite
	for every \(t\in[0,\infty)\).
	Therefore, by the moment conservation property \eqref{eqn:moment_conservation_S},
	we have
	\[
	m_1\langle S[f^{(N)}(t)]\rangle = 0,
	\qquad \forall\, t\in[0,\infty).
	\]
	
	Consequently,
	\begin{align*}
		\frac{d}{dt} m_1\langle f^{(N)}(t)\rangle
		&=
		m_1\langle S[f^{(N)}(t)]\rangle
		-
		m_1\langle V[f^{(N)}(t)]\rangle \\
		&=
		-
		m_{\delta+1}\langle f^{(N)}(t)\rangle
		\le
		- h^{\delta}\, m_1\langle f^{(N)}(t)\rangle ,
	\end{align*}
	where the last inequality follows from Remark~\ref{rmk:high_moment_est_low_moment}.
	
	Applying Gr\"onwall’s inequality yields
	\[
	m_1\langle f^{(N)}(t)\rangle
	\le
	e^{-h^{\delta} t}\, m_1\langle f^{(N)}(0)\rangle
	\le
	e^{-h^{\delta} t}\, m_1\langle f_0\rangle .
	\]
	
	By construction, \(f_0^{(N)} \to f_0\) in \(\ell^1(\mathbb{N})\).
	Hence, Theorem~\ref{thm:Lipschitz_dependence} implies that
	\(f^{(N)}(t) \to f(t)\) in \(\ell^1(\mathbb{N})\) for each \(t\ge 0\).
	Finally, applying Fatou’s lemma, we obtain
	\[
	m_1\langle f(t)\rangle
	\le
	e^{-h^{\delta} t}\, m_1\langle f_0\rangle ,
	\qquad \forall\, t\in[0,\infty).
	\]

	\section{Creation and propagation of positivity and polynomial poments}
	
	In this section, we prove Theorems~\ref{thm:creation_of_positivity}
	and~\ref{thm:creation_of_polynomial_moments}.
	
	\begin{prop}[Propagation of Positivity]\label{prop:propagation_of_positivity}
		Let \(f(t)\) be the solution to
		\eqref{eqn:discrete_pde} with nonzero initial data \(f_0\ge 0\),
		\(f_0\not\equiv 0\), satisfying \(m_{\kappa}\langle f_0\rangle<\infty\) for some
		\(\kappa>\delta\).
		Then, for every index \(i\) such that \(f_0(i)>0\), we have
		\[
		f(t,i)>0,
		\qquad
		\forall\, t\in[0,\infty).
		\]
		
	\end{prop}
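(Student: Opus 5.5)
The idea is to use the gain--loss decomposition \eqref{eqn:opt_gain_loss}. Since $O^+[f]\ge 0$ whenever $f\ge 0$, each component satisfies a linear differential inequality whose loss coefficient is locally integrable, and an integrating factor then keeps $f(t,i)$ bounded below by a positive multiple of $f_0(i)$.

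Fix $i$ with $f_0(i)>0$. By Theorem~\ref{thm:GWP}, $f(t)\ge 0$ for all $t$ and $f\in C^1([0,\infty);\ell^1(\mathbb{N}))$. Moreover $f(t)$ stays in the set $\mathcal{S}$ with bounds $\mathfrak{m}_1=m_1\langle f_0\rangle$ and $\mathfrak{m}_\kappa=\max\{m_\kappa\langle f_0\rangle,2\mathcal{B}_\kappa(\mathfrak{m}_1)\}$. Here the hypothesis gives $\kappa>\delta$; if Assumption~\ref{asmp_1} is needed to invoke the theorem, I would assume it, and in any case the argument below uses only $\kappa>\delta$.

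Write the $i$-th component as
\[
\frac{d}{dt} f_i = O^+[f](i) - a_i(t) f_i,
\]
where
\[
a_i(t) := 2\sum_{j\ge 1}K_1(\bfi,\bfj)f_j + \sum_{j=1}^{i-1}\bigl(K_2(\bfi-\bfj,\bfj)+K_3(\bfi-\bfj,\bfj)\bigr)f_j + \bfi^{\delta}.
\]
Every term of $O^-[f](i)$ carries the factor $f_i$, so this factorization is exact. Since $f\ge 0$, every term of $O^+[f](i)$ is nonnegative, and hence
\[
\frac{d}{dt} f_i \ge -a_i(t) f_i.
\]

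Next I bound $a_i$ uniformly in time for fixed $i$. Using $K_1(\bfi,\bfj)\le \bfi^{\alpha_1}\bfj^{\alpha_1}(\bfi^{\beta_1}+\bfj^{\beta_1})$, I get
\[
a_i(t)\lesssim \bfi^{\alpha_1+\beta_1}m_{\alpha_1}\langle f(t)\rangle + \bfi^{\alpha_1}m_{\alpha_1+\beta_1}\langle f(t)\rangle + (\text{analogous } K_2,K_3 \text{ terms}) + \bfi^\delta.
\]
The $K_2,K_3$ sums are finite sums over $j<i$ and are bounded by $C(i)\,m_0\langle f\rangle$. All moments involved have order at most $\max_n(\alpha_n+\beta_n)<\delta<\kappa$. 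By Remark~\ref{rmk:high_moment_est_low_moment}, $m_0\le h^{-1}m_1$, and interpolation (Remark~\ref{rmk:eat_the_term_by_constant}) between $m_0$ and $m_\kappa$ bounds every such moment by a constant depending on $\mathfrak{m}_1,\mathfrak{m}_\kappa,h$. Hence $0\le a_i(t)\le A_i<\infty$ for all $t\ge 0$. (Continuity of $a_i$ in $t$ follows from $f\in C^1(\ell^1)$ and the same moment bounds, but boundedness is all that is needed.)

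Finally, with the integrating factor,
\[
\frac{d}{dt}\Bigl(e^{\int_0^t a_i(s)\,ds}f_i(t)\Bigr) = e^{\int_0^t a_i(s)\,ds}\,O^+[f](i)\ge 0,
\]
so
\[
f_i(t)\ge e^{-\int_0^t a_i(s)\,ds}f_0(i)\ge e^{-A_i t}f_0(i)>0
\]
for all $t\ge 0$. The main point to check carefully is the uniform-in-time bound on the loss rate $a_i$, in particular that the infinite sum $\sum_j K_1(\bfi,\bfj)f_j$ is controlled by moments of order at most $\alpha_1+\beta_1<\kappa$. This holds because $\delta>\alpha_1+\beta_1$ and $\kappa>\delta$. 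A secondary subtlety is that $a_i$ need only be measurable and locally integrable for the integrating-factor argument to go through.
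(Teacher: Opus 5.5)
Your proposal is correct and follows essentially the same route as the paper. You drop the nonnegative gain term $O^+[f](i)$. You then bound the loss rate at a fixed index $i$ uniformly in time, using the bounds $m_1\le\mathfrak{m}_1$ and $m_\kappa\le\mathfrak{m}_\kappa$ from Theorem~\ref{thm:GWP}; this works because only moments of order at most $\alpha_1+\beta_1<\delta<\kappa$ appear. Finally you integrate the resulting linear differential inequality.

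Your exact factorization $O^-[f](i)=a_i(t)f_i$ with an integrating factor is only a cosmetic variant of the paper's bound $O^-[f](i)\le C_i f_i$. It does give the explicit lower bound $f_i(t)\ge e^{-A_i t}f_0(i)$, and it correctly records that the constant also depends on $h$.
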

	\begin{proof}
		We recall the gain-loss decomposition of \(O[f]\) from
		\eqref{eqn:opt_gain_loss}:
		\[
		O[f]=O^{+}[f]-O^{-}[f].
		\]
		
		For the loss term, we have
		\begin{align*}
			O^{-}[f](i)
			&=
			2\sum_{j=1}^{\infty} K_1(i,j)\, f_i f_j
			+\sum_{j=1}^{i-1} K_2(i-j,j)\, f_i f_j
			+\sum_{j=1}^{i-1} K_3(i-j,j)\, f_i f_j
			+\gamma(i) f_i \\
			&\le
			\Bigl[
			2^{\beta_1}\bfi^{\alpha_1+\beta_1} m_{\alpha_1}\langle f\rangle
			+2^{\beta_1}\bfi^{\alpha_1} m_{\alpha_1+\beta_1}\langle f\rangle
			+\bfi^{\alpha_2+\beta_2} m_{\alpha_2}\langle f\rangle
			+\bfi^{\alpha_3+\beta_3} m_{\alpha_3}\langle f\rangle
			+\bfi^{\delta}
			\Bigr] f_i .
		\end{align*}
		
		By Theorem~\ref{thm:GWP}, both \(m_1\langle f(t)\rangle\) and
		\(m_{\kappa}\langle f(t)\rangle\) are uniformly bounded in time.
		Consequently, for each fixed \(i\), there exists a constant \(C_i>0\),
		depending only on \(m_1\langle f_0\rangle\) and
		\(m_{\kappa}\langle f_0\rangle\), such that
		\begin{equation}\label{eqn:propagation_estimation_loss_term}
			O^{-}[f](i)
			\le
			C\bigl(1+\bfi^{\delta}\bigr) f_i
			\le
			C_i f_i .
		\end{equation}
		
		Dropping the nonnegative gain term \(O^{+}[f]\), we obtain
		\[
		\frac{\partial f_i}{\partial t}
		= O[f](i)
		\ge -C_i f_i .
		\]
		Therefore, if \(f_0(i)>0\), it follows that
		\[
		f(t,i)>0,
		\qquad
		\forall\, t\in[0,\infty),
		\]
		which proves the propagation of positivity at each initially positive
		index.
		
	\end{proof}
	\subsection{Proof of Theorem \ref{thm:creation_of_positivity}}		\label{SecTh4}
	
	If \(f_0\equiv 0\), then the result follows trivially from the fact that
	\(\gcd(I)=0\).
	Hence, we only consider the case \(f_0\not\equiv 0\).
	
	We first show that if the initial datum \(f_0\) is positive at indices
	\(i\) and \(j\) (with \(i\) possibly equal to \(j\)), then the solution
	immediately generates positive values at the indices
	\(i+j\) and \(|i-j|\).
	
	Assume that \(f_0(i)>0\) and \(f_0(j)>0\).
	If \(f_0(i+j)>0\), then by Proposition~\ref{prop:propagation_of_positivity}
	we have \(f(t,i+j)>0\) for all \(t\in[0,\infty)\), and the claim follows.
	We therefore consider the case \(f_0(i+j)=0\).
	In this case, we estimate
	\[
	\frac{\partial f_{i+j}}{\partial t}
	\ge
	K_1(\bfi,\bfj)\,f_i f_j - O^{-}[f](i+j),
	\]
	which, by \eqref{eqn:propagation_estimation_loss_term}, implies
	\[
	\frac{\partial f_{i+j}}{\partial t}
	\ge
	K_1(\bfi,\bfj)\,f_i f_j - C_{i+j} f_{i+j}.
	\]
	
	Fix any \(\tau>0\).
	By Proposition~\ref{prop:propagation_of_positivity}, the components
	\(f_i(t)\) and \(f_j(t)\) remain strictly positive on \([0,\tau]\).
	Therefore, there exists a constant \(c>0\) such that
	\[
	\frac{\partial f_{i+j}(t)}{\partial t}
	\ge
	c - C_{i+j} f_{i+j}(t),
	\qquad
	\forall\, t\in[0,\tau].
	\]
	Solving this differential inequality yields
	\[
	f_{i+j}(t)
	\ge
	\frac{c}{C_{i+j}}\bigl(1-e^{-C_{i+j}t}\bigr)
	>0,
	\qquad
	\forall\, t\in(0,\tau].
	\]
	Combining this with Proposition~\ref{prop:propagation_of_positivity}, we
	conclude that \(f(t,i+j)>0\) for all \(t\in(0,\infty)\).
	
	Next, consider the index \(|i-j|\ge 1\).
	Without loss of generality, assume \(i<j\).
	As before, it suffices to consider the case \(f_0(j-i)=0\).
	In this case, the claim follows analogously by considering the estimate
	\[
	\frac{\partial f_{j-i}}{\partial t}
	\ge
	K_2(\bfj-\bfi,\bfi)\,f_i f_j - O^{-}[f](j-i),
	\]
	and proceeding as above.

	We now inductively define a sequence of index sets by
	\[
	I_0 := I, 
	\qquad 
	I_{n+1} := \{\, i+j,\ |i-j| \in \mathbb{N} : i,j \in I_n \,\} \cup I_n .
	\]
	
	By the argument above and an induction on \(n\), it follows that
	\(f(t,i)>0\) for all \(t\in(0,\infty)\) and all \(i\in I_n\), for every
	\(n\ge 0\).
	
	Next, consider the ideal generated by \(I\) in \(\mathbb{Z}\), denoted by
	\(\langle I\rangle\).
	By construction,
	\[
	\bigcup_{n=0}^\infty I_n = \langle I\rangle \cap \mathbb{N}.
	\]
	Since \(\mathbb{Z}\) is a principal ideal domain, we have
	\(\langle I\rangle = \langle \gcd(I)\rangle\).
	Consequently,
	\[
	\bigcup_{n=0}^\infty I_n
	= \langle \gcd(I)\rangle \cap \mathbb{N}
	= \gcd(I)\,\mathbb{N}.
	\]
	Therefore, the solution \(f(t)\) is strictly positive for all
	\(t\in(0,\infty)\) and all indices \(i \in \gcd(I)\mathbb{N}\).
	
	We now show that \(f(t,i)=0\) for all \(i \notin \gcd(I)\mathbb{N}\).
	For equation \eqref{eqn:discrete_pde}, suppose that
	\(f_k(t)\equiv 0\) for all \(k \notin \gcd(I)\mathbb{N}\).
	For indices of the form \(k=\gcd(I)\,i\) with \(i\in\mathbb{N}\), define
	\(g_i(t) := f_{\gcd(I)i}(t)\).
	A direct computation shows that \(g_i\) satisfies
	\begin{align*}
		\frac{\partial g_i}{\partial t}
		&=
		\gcd(I)^{2\alpha_1+\beta_1}
		\Bigg[
		\sum_{j=1}^{i-1} K_1(\bfj,\bfi-\bfj) g_j g_{i-j}
		-2\sum_{j=1}^{\infty} K_1(\bfi,\bfj) g_i g_j
		\Bigg] \\
		&\quad
		+\gcd(I)^{2\alpha_2+\beta_2}
		\Bigg[
		-\sum_{j=1}^{i-1} K_2(\bfi-\bfj,\bfj) g_i g_j
		+\sum_{j=i+1}^{\infty} K_2(\bfj-\bfi,\bfi) g_i g_j
		+\sum_{j=i+1}^{\infty} K_2(\bfi,\bfj-\bfi) g_j g_{j-i}
		\Bigg] \\
		&\quad
		+\gcd(I)^{2\alpha_3+\beta_3}
		\Bigg[
		-\sum_{j=1}^{i-1} K_3(\bfi-\bfj,\bfj) g_i g_j
		+\sum_{j=i+1}^{\infty} K_3(\bfj-\bfi,\bfi) g_i g_j
		+\sum_{j=i+1}^{\infty} K_3(\bfi,\bfj-\bfi) g_j g_{j-i}
		\Bigg] \\
		&\quad
		-\gcd(I)^{\delta}\,\gamma(\bfi)\,g_i,
		\qquad
		g_i(0)=f_{\gcd(I)i}(0).
	\end{align*}
	This system has the same structure as \eqref{eqn:discrete_pde}, up to
	multiplicative constants.
	Hence, the global well-posedness result established in
	Theorem~\ref{thm:GWP} applies verbatim, and the system for \(g_i\) admits a
	unique global solution.
	
	Define
	\[
	\tilde f(t,i) :=
	\begin{cases}
		g_i(t), & i\in \gcd(I)\mathbb{N},\\
		0, & i\notin \gcd(I)\mathbb{N}.
	\end{cases}
	\]
	By construction, \(\tilde f\) is a solution of \eqref{eqn:discrete_pde}
	with initial datum \(f_0\).
	By uniqueness, we conclude that \(\tilde f = f\).
	In particular,
	\[
	f(t,i)=0
	\qquad
	\text{for all } t\in(0,\infty) \text{ and all } i\notin \gcd(I)\mathbb{N}.
	\]

	\begin{remark} This proposition shows that when $\gcd(I)\neq 1$, the dynamics of the equation are confined to the sublattice $\gcd(I)\mathbb{N}$. In particular, the original equation reduces to an equivalent system posed on $\gcd(I)\mathbb{N}$.
		
	\end{remark}
	\subsection{Proof of Theorem \ref{thm:creation_of_polynomial_moments}}		\label{SecTh5}
	For the given initial datum $f_0$, we introduce the truncated initial data
	\[
	f_0^{(N)}(i):=f_0(i)\mathbf{1}_{i\le N}.
	\]
	Clearly,
	\[
	m_1\left\langle f_0^{(N)} \right\rangle \le \mathfrak{m}_1,
	\qquad
	\mathfrak{m}_1:=m_1\left\langle f_0 \right\rangle.
	\]
	
	By Theorem~\ref{thm:GWP}, for each $N\in\mathbb{N}$ there exists a global solution
	$f^{(N)}(t)$ associated with the initial datum $f_0^{(N)}$, and all its moments
	are finite:
	\[
	m_k\left\langle f^{(N)}(t)\right\rangle<\infty,
	\qquad \forall\, t\in[0,\infty),\ \forall\, k\ge0.
	\]
	
	Since $f_0\not\equiv 0$, we have $f_0^{(N)}\not\equiv 0$ for $N$ sufficiently large. By
	Proposition~\ref{prop:propagation_of_positivity}, it follows that
	\[
	m_1\left\langle f^{(N)}(t)\right\rangle>0,
	\qquad \forall\, t\in[0,\infty).
	\]
	Proceeding as in the derivation of \eqref{eqn:diff_inequality_for_moments}, we obtain
	\begin{align*}
		\frac{d}{dt} m_k\left\langle f^{(N)}(t)\right\rangle
		&= m_k\left\langle O[f^{(N)}](t)\right\rangle \\
		&\le \mathcal{C}_k\, m_1\left\langle f^{(N)}(t)\right\rangle^{1+\frac{\delta+k-1}{\delta-2\alpha_1-\beta_1+1}}
		-\frac{1}{2}\,
		m_1\left\langle f^{(N)}(t)\right\rangle^{-\frac{\delta}{k-1}}
		m_k\left\langle f^{(N)}(t)\right\rangle^{\frac{\delta+k-1}{k-1}} \\
		&\le \mathcal{C}_k\,\mathfrak{m}_1^{1+\frac{\delta+k-1}{\delta-2\alpha_1-\beta_1+1}}
		-\frac{1}{2}\,\mathfrak{m}_1^{-\frac{\delta}{k-1}}
		m_k\left\langle f^{(N)}(t)\right\rangle^{\frac{\delta+k-1}{k-1}} .
	\end{align*}
	
	Setting
	\[
	y(t):=m_k\left\langle f^{(N)}(t)\right\rangle,
	\]
	we arrive at the differential inequality
	\[
	\frac{dy}{dt}
	\le
	\mathcal{C}_k\,\mathfrak{m}_1^{1+\frac{\delta+k-1}{\delta-2\alpha_1-\beta_1+1}}
	-\frac{1}{2}\,\mathfrak{m}_1^{-\frac{\delta}{k-1}}
	y^{\frac{\delta+k-1}{k-1}}.
	\]
	
	A standard comparison argument shows that a supersolution is given by
	\begin{align*}
		Y(t)
		&=
		\frac{m_k\left\langle f_0^{(N)}\right\rangle}
		{\left(1+\frac{1}{2}\mathfrak{m}_1^{-\frac{\delta}{k-1}}
			\frac{\delta}{k-1}
			\left[m_k\left\langle f_0^{(N)}\right\rangle\right]^{\frac{\delta}{k-1}} t
			\right)^{\frac{k-1}{\delta}}}
		+
		\left(
		2\mathcal{C}_k\,\mathfrak{m}_1^{1+\frac{\delta+k-1}{\delta-2\alpha_1-\beta_1+1}
			+\frac{\delta}{k-1}}
		\right)^{\frac{k-1}{\delta+k-1}} \\
		&=
		\frac{1}{\left(1+\frac{1}{2}\mathfrak{m}_1^{-\frac{\delta}{k-1}}
			\frac{\delta}{k-1} t\right)^{\frac{k-1}{\delta}}}
		+\mathcal{B}_k(\mathfrak{m}_1) \\
		&<
		\left(\frac{2(k-1)}{\delta t}\right)^{\frac{k-1}{\delta}}\mathfrak{m}_1
		+\mathcal{B}_k(\mathfrak{m}_1).
	\end{align*}
	
	Consequently, for all $t\in(0,\infty)$ and all $N$ sufficiently large,
	\begin{equation}\label{eqn:creation_of_poly_mom_cut_off_version}
		m_k\left\langle f^{(N)}(t)\right\rangle
		<
		\left(\frac{2(k-1)}{\delta t}\right)^{\frac{k-1}{\delta}}\mathfrak{m}_1
		+\mathcal{B}_k(\mathfrak{m}_1).
	\end{equation}
	
	Finally, since $f_0^{(N)}\to f_0$ in $\ell^1(\mathbb{N})$, Theorem~\ref{thm:Lipschitz_dependence}
	implies that for any fixed $t\in(0,\infty)$,
	\[
	f^{(N)}(t)\to f(t)\quad \text{in }\ell^1(\mathbb{N}).
	\]
	By Fatou’s lemma and the uniform bound \eqref{eqn:creation_of_poly_mom_cut_off_version},
	we conclude that
	\[
	m_k\left\langle f(t)\right\rangle
	\le
	\left(\frac{2(k-1)}{\delta t}\right)^{\frac{k-1}{\delta}}\mathfrak{m}_1
	+\mathcal{B}_k(\mathfrak{m}_1),
	\qquad \forall\, t\in(0,\infty).
	\]

	\section{Creation and propogation of Mittag-Leffler moments}
	
	In this section, we prove Theorems~\ref{thm:propagation_of_m_l_tails}
	and~\ref{thm:creation_of_exp_moments}.
	
	\begin{definition}
		For a function $f:\mathbb{N}\to\mathbb{R}_{\ge 0}$, and for any
		$n\in\mathbb{N}$, $\lambda>0$, $\rho>0$, and $a\in[1,\infty)$, we define
		the truncated Mittag--Leffler moments and their shifted counterparts by
		\[
		\mathcal{E}^n_a(\lambda)\langle f\rangle
		:=\sum_{k=1}^n \frac{m_k\langle f\rangle\,\lambda^{ak}}{\Gamma(ak+1)},
		\qquad
		\mathcal{E}^n_{a,\rho}(\lambda)\langle f\rangle
		:=\sum_{k=1}^n \frac{m_{k+\rho}\langle f\rangle\,\lambda^{ak}}{\Gamma(ak+1)}.
		\]
		
	\end{definition}
	\begin{lem}
		For any $n\in\mathbb{N}\cup\{\infty\}$, any $0<\rho_1<\rho_2$, any $\lambda>0$,
		and any $a\in[1,\infty)$, the following interpolation inequality holds:
		\begin{equation}\label{eqn:m_l_interpolation_prod}
			\mathcal{E}^n_{a,\rho_1}(\lambda)\langle f\rangle
			\le
			\left[
			\mathcal{E}^n_{a}(\lambda)\langle f\rangle
			\right]^{\frac{\rho_2-\rho_1}{\rho_2}}
			\left[
			\mathcal{E}^n_{a,\rho_2}(\lambda)\langle f\rangle
			\right]^{\frac{\rho_1}{\rho_2}}.
		\end{equation}
		
		%	\begin{enumerate}
			%		\item 
			
			%		\item \begin{equation}\label{eqn:m_l_interpolation_sum}
				%			\calE^n_{a,\rho_1}(\lambda,t)\le C_\varepsilon\calE^n_{a}(\lambda,t)+\varepsilon\calE^n_{a,\rho_2}(\lambda,t).
				%		\end{equation}
			%	\end{enumerate}
	\end{lem}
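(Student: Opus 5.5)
The plan is to reduce \eqref{eqn:m_l_interpolation_prod} to the moment interpolation inequality \eqref{eqn:moment_interpolation} applied term by term, and then to H\"older's inequality for finite or infinite sums.

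Fix \(k\in\{1,\dots,n\}\). Since \(k<k+\rho_1<k+\rho_2\), I apply \eqref{eqn:moment_interpolation} with \(a=k\) and \(b=k+\rho_2\), taking the middle exponent to be \(k+\rho_1\). This gives
\[
m_{k+\rho_1}\langle f\rangle
\le
m_k\langle f\rangle^{\frac{\rho_2-\rho_1}{\rho_2}}\,
m_{k+\rho_2}\langle f\rangle^{\frac{\rho_1}{\rho_2}} .
\]
I then multiply by the weight \(w_k:=\lambda^{ak}/\Gamma(ak+1)>0\), writing it as \(w_k=w_k^{\theta}w_k^{1-\theta}\) with \(\theta=\frac{\rho_2-\rho_1}{\rho_2}\). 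This yields
\[
w_k\,m_{k+\rho_1}\langle f\rangle\le (w_k m_k\langle f\rangle)^{\theta}\,(w_k m_{k+\rho_2}\langle f\rangle)^{1-\theta}.
\]

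Next I sum over \(k=1,\dots,n\) and apply the discrete H\"older inequality with conjugate exponents \(1/\theta\) and \(1/(1-\theta)\). This bounds the sum by \(\bigl[\mathcal{E}^n_a(\lambda)\langle f\rangle\bigr]^{\theta}\bigl[\mathcal{E}^n_{a,\rho_2}(\lambda)\langle f\rangle\bigr]^{1-\theta}\), which is exactly the claimed inequality.

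For \(n=\infty\), H\"older's inequality holds for nonnegative series with values in \([0,\infty]\), so the same argument applies verbatim. Alternatively, one can pass to the limit \(n\to\infty\) in the finite case by monotone convergence.

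No real obstacle is expected. The only care needed is with the conventions: if some moment is infinite or zero, the inequality is trivial or follows from the convention \(0\cdot\infty\) handled by H\"older. The exponents also need to be checked: for \(a=k\), \(b=k+\rho_2\), \(k+\rho_1\), the interpolation exponent is \(\frac{b-(k+\rho_1)}{b-a}=\frac{\rho_2-\rho_1}{\rho_2}\).
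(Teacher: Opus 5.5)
Your proposal is correct and follows essentially the same route as the paper: apply the moment interpolation inequality \eqref{eqn:moment_interpolation} termwise with endpoints $k$ and $k+\rho_2$, then use H\"older's inequality over $k$ with exponents $\frac{\rho_2}{\rho_2-\rho_1}$ and $\frac{\rho_2}{\rho_1}$. You make explicit the splitting of the weight $\lambda^{ak}/\Gamma(ak+1)$, which the paper uses implicitly, and the $n=\infty$ case goes through verbatim in both.
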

	\begin{proof}
		
		By the moment interpolation inequality \eqref{eqn:moment_interpolation}, we have
		\[
		m_{k+\rho_1}\langle f\rangle
		\le
		m_k\langle f\rangle^{\frac{\rho_2-\rho_1}{\rho_2}}
		\, m_{k+\rho_2}\langle f\rangle^{\frac{\rho_1}{\rho_2}} .
		\]
		Therefore,
		\begin{align*}
			\mathcal{E}^n_{a,\rho_1}(\lambda)\langle f\rangle
			&=
			\sum_{k=1}^n
			\frac{m_{k+\rho_1}\langle f\rangle\,\lambda^{ak}}{\Gamma(ak+1)} \\
			&\le
			\sum_{k=1}^n
			\frac{
				m_k\langle f\rangle^{\frac{\rho_2-\rho_1}{\rho_2}}
				\, m_{k+\rho_2}\langle f\rangle^{\frac{\rho_1}{\rho_2}}
				\, \lambda^{ak}
			}{
				\Gamma(ak+1)
			} \\
			&\le
			\left(
			\sum_{k=1}^n
			\frac{m_k\langle f\rangle\,\lambda^{ak}}{\Gamma(ak+1)}
			\right)^{\frac{\rho_2-\rho_1}{\rho_2}}
			\left(
			\sum_{k=1}^n
			\frac{m_{k+\rho_2}\langle f\rangle\,\lambda^{ak}}{\Gamma(ak+1)}
			\right)^{\frac{\rho_1}{\rho_2}} \\
			&=
			\left[
			\mathcal{E}^n_a(\lambda)\langle f\rangle
			\right]^{\frac{\rho_2-\rho_1}{\rho_2}}
			\left[
			\mathcal{E}^n_{a,\rho_2}(\lambda)\langle f\rangle
			\right]^{\frac{\rho_1}{\rho_2}} .
		\end{align*}
		
		The same argument applies verbatim in the case $n=\infty$, which completes the proof of
		\eqref{eqn:m_l_interpolation_prod}.
		
		%	By Young's inequality, for any \(\varepsilon>0\), there exists \(C_\varepsilon\) s.t.
		%	\[\left|\bfi\right|^{\rho_1}<C_\varepsilon +\varepsilon \left|\bfi\right|^{\rho_2}\quad \forall i,h,\]
		%	hence
		%	\begin{align*}
			%		\mathcal{E}^n_{a,\rho_1}(\lambda,t)&=\sum_{k=1}^n\frac{m_{k+\rho_1}(t)\lambda^{ak}}{\Gamma(ak+1)}=\sum_{k=1}^n\frac{\sum_{i=1}^\infty f_i(\bfi)^{k+\rho_1}\lambda^{ak}}{\Gamma(ak+1)}\\
			%		&\le C_\varepsilon \sum_{k=1}^n\frac{\sum_{i=1}^\infty f_i(\bfi)^{k}\lambda^{ak}}{\Gamma(ak+1)}+\varepsilon\sum_{k=1}^n\frac{\sum_{i=1}^\infty f_i(\bfi)^{k+\rho_2}\lambda^{ak}}{\Gamma(ak+1)}\\
			%		&=C_\varepsilon\calE^n_{a}(\lambda,t)+\varepsilon\calE^n_{a,\rho_2}(\lambda,t).
			%	\end{align*}
	\end{proof}
	\begin{prop}
		For $f:\mathbb{N}\to\mathbb{R}_{\ge 0}$ be such that all its moments are finite. For any integer $k\ge 1$, the following
		combinatorial moment estimate holds:
		\begin{equation}\label{eqn:combin_estimate_for_moment}
			m_k\langle S[f]\rangle
			\le
			2^{\beta_1+1}
			\sum_{l=1}^{k-1}
			\binom{k}{l}\,
			m_{\alpha_1+l}\langle f\rangle\,
			m_{\alpha_1+\beta_1+k-l}\langle f\rangle .
		\end{equation}
		
	\end{prop}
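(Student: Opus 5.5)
We plan to follow the argument for \eqref{eqn:moment_k_estimation}, but keep the full binomial expansion instead of the crude bound $(x+y)^k-x^k-y^k\le(2^k-2)xy^{k-1}$.

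\textbf{Case $k=1$.} The right-hand side is an empty sum, and $m_1\langle S[f]\rangle=0$ by \eqref{eqn:moment_conservation_S}, so the claim holds.

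\textbf{Reduction to $S^{(1)}$ for $k\ge2$.} Apply the weak formulation \eqref{eqn:weak_formulation} with $\varphi(i)=\bfi^k$; all sums are finite since all moments of $f$ are finite. The $K_2$ and $K_3$ contributions carry the factor $-(K_2+K_3)f_if_{i+j}\le 0$. They multiply $(\bfi+\bfj)^k-\bfi^k-\bfj^k\ge 0$, so they are nonpositive and can be dropped. This gives
\[
m_k\langle S[f]\rangle\le\sum_{i,j\ge1}K_1(\bfi,\bfj)f_if_j\bigl[(\bfi+\bfj)^k-\bfi^k-\bfj^k\bigr].
\]
Since $k$ is an integer, the binomial theorem gives exactly
\[
(\bfi+\bfj)^k-\bfi^k-\bfj^k=\sum_{l=1}^{k-1}\binom{k}{l}\bfi^{\,l}\bfj^{\,k-l}.
\]

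\textbf{Handling the kernel.} The remaining issue is the factor $(\bfi+\bfj)^{\beta_1}$ in $K_1$. We bound it by $(2\max\{\bfi,\bfj\})^{\beta_1}\le 2^{\beta_1}(\bfi^{\beta_1}+\bfj^{\beta_1})$. This produces two sums:
\[
2^{\beta_1}\sum_{l=1}^{k-1}\binom{k}{l}\sum_{i,j}f_if_j\Bigl(\bfi^{\alpha_1+l}\bfj^{\alpha_1+\beta_1+k-l}+\bfi^{\alpha_1+\beta_1+l}\bfj^{\alpha_1+k-l}\Bigr).
\]
The first is $\sum_l\binom{k}{l}m_{\alpha_1+l}\,m_{\alpha_1+\beta_1+k-l}$. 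In the second, we swap the roles of $i$ and $j$ and reindex $l\mapsto k-l$, using $\binom{k}{l}=\binom{k}{k-l}$. The term becomes $m_{\alpha_1+k-l}\,m_{\alpha_1+\beta_1+l}$, which after reindexing is again $m_{\alpha_1+l}\,m_{\alpha_1+\beta_1+k-l}$. So the second sum equals the first, and the factor $2\cdot 2^{\beta_1}=2^{\beta_1+1}$ results, which is \eqref{eqn:combin_estimate_for_moment}.

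\textbf{Main obstacle.} The only delicate point is the kernel bound: using $\max$ and then the sum rather than splitting into regions $i\le j$, and checking that the symmetric reindexing indeed merges the two sums with the correct constant. The rest follows the earlier lemma verbatim.
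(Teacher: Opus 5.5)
Your proof is correct and is essentially the paper's argument. Both use the weak formulation with $\varphi(i)=\bfi^k$, drop the nonpositive $K_2,K_3$ contributions, expand binomially, bound $(\bfi+\bfj)^{\beta_1}\le 2^{\beta_1}\max\{\bfi,\bfj\}^{\beta_1}$, and merge the two resulting sums via $i\leftrightarrow j$ and $l\mapsto k-l$. The only cosmetic difference is that the paper splits into the regions $i\le j$, $j\le i$ and later enlarges the restricted sum to the full double sum, whereas you use $\max\{\bfi,\bfj\}^{\beta_1}\le\bfi^{\beta_1}+\bfj^{\beta_1}$ and work with full sums from the start, reaching the same constant $2^{\beta_1+1}$.
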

	\begin{proof}
		By \eqref{eqn:moment_conservation_S}, the estimate
		\eqref{eqn:combin_estimate_for_moment} is trivial for $k=1$.
		We therefore assume $k\ge 2$. Using the weak formulation and the binomial
		expansion of $(\bfi+\bfj)^k$, we obtain
		\begin{align*}
			m_k\langle S[f]\rangle
			&\le
			\sum_{l=1}^{k-1} \binom{k}{l}
			\sum_{i,j=1}^\infty
			K_1(\bfi,\bfj)\, f_i f_j\, \bfi^{\,l}\bfj^{\,k-l}.
		\end{align*}
		Splitting the sum into the regions $i\le j$ and $j\le i$, and using the bound
		$(\bfi+\bfj)^{\beta_1}\le 2^{\beta_1}\max\{\bfi^{\beta_1},\bfj^{\beta_1}\}$,
		we find
		\begin{align*}
			m_k\langle S[f]\rangle
			&\le
			\sum_{l=1}^{k-1} \binom{k}{l}
			\Bigg(
			\sum_{i\le j}
			\bfi^{\alpha_1}\bfj^{\alpha_1}(\bfi+\bfj)^{\beta_1}
			f_i f_j\, \bfi^{\,l}\bfj^{\,k-l}  \\
			&\hspace{3.5cm}
			+
			\sum_{j\le i}
			\bfi^{\alpha_1}\bfj^{\alpha_1}(\bfi+\bfj)^{\beta_1}
			f_i f_j\, \bfi^{\,l}\bfj^{\,k-l}
			\Bigg) \\
			&\le
			2^{\beta_1}
			\sum_{l=1}^{k-1} \binom{k}{l}
			\Bigg(
			\sum_{i\le j}
			\bfi^{\alpha_1+l}\bfj^{\alpha_1+\beta_1+k-l} f_i f_j
			+
			\sum_{j\le i}
			\bfi^{\alpha_1+\beta_1+l}\bfj^{\alpha_1+k-l} f_i f_j
			\Bigg) \\
			&\le
			2^{\beta_1+1}
			\sum_{l=1}^{k-1} \binom{k}{l}
			\sum_{i\le j}
			\bfi^{\alpha_1+l}\bfj^{\alpha_1+\beta_1+k-l} f_i f_j .
		\end{align*}
		Finally, by symmetry and the definition of moments, we conclude that
		\[
		m_k\langle S[f]\rangle
		\le
		2^{\beta_1+1}
		\sum_{l=1}^{k-1}
		\binom{k}{l}\,
		m_{\alpha_1+l}\langle f\rangle\,
		m_{\alpha_1+\beta_1+k-l}\langle f\rangle,
		\]
		which proves \eqref{eqn:combin_estimate_for_moment}.
		
	\end{proof}
	\begin{lem}
		For any integers $k\ge 2$ and $1\le l<k$, and for any $a\ge 1$, the following estimate holds:
		\begin{equation}\label{eqn:combin_estimate_gamma}
			\binom{k}{l}\,
			\frac{\Gamma(al+1)\Gamma(a(k-l)+1)}{\Gamma(ak+1)}
			\le 2\sqrt{a}.
		\end{equation}
		
	\end{lem}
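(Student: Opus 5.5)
Set $x:=l/k\in(0,1)$. The approach is to rewrite the left-hand side of \eqref{eqn:combin_estimate_gamma} through Beta functions and bound the resulting ratio by Stirling-type estimates. Using $\binom{k}{l}=\frac{\Gamma(k+1)}{\Gamma(l+1)\Gamma(k-l+1)}$ together with $\Gamma(p)\Gamma(q)/\Gamma(p+q)=B(p,q)$, we have
\[
\binom{k}{l}\frac{\Gamma(al+1)\Gamma(a(k-l)+1)}{\Gamma(ak+1)}
=\frac{B(al+1,\,a(k-l)+1)\,(ak+1)}{B(l+1,\,k-l+1)\,(k+1)} .
\]
The prefactor satisfies $(ak+1)/(k+1)\le a$. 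Hence it suffices to compare the two Beta integrals
\[
B(p+1,q+1)=\int_0^1 s^{p}(1-s)^{q}\,\mathrm ds ,
\]
taken at $(p,q)=(al,a(k-l))$ and at $(p,q)=(l,k-l)$.

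The key step is a two-sided Laplace-type bound. Write $\phi(s)=x\log s+(1-x)\log(1-s)$, which is maximized at $s=x$. Then $B(al+1,a(k-l)+1)=\int_0^1 e^{ak\phi(s)}\,\mathrm ds$, and the same integral with $ak$ replaced by $k$ gives the other Beta function. I would use sharp Stirling bounds with explicit constants, $\sqrt{2\pi}\,z^{z+1/2}e^{-z}\le\Gamma(z+1)\le e\,z^{z+1/2}e^{-z}$ for $z\ge1$, applied directly to each Gamma factor rather than to the Beta integrals. The exponential parts cancel exactly: in the original expression, $\Gamma(al+1)\Gamma(a(k-l)+1)/\Gamma(ak+1)$ contributes $x^{akx}(1-x)^{ak(1-x)}$, and $\binom{k}{l}$ contributes its reciprocal raised to the power $1/a$. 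This leaves
\[
\big[x^{x}(1-x)^{1-x}\big]^{k(a-1)}\times\Big(\frac{a\,l(k-l)}{k}\Big)^{1/2}\Big(\frac{k}{l(k-l)}\Big)^{1/2}\times(\text{Stirling constants}) .
\]
The middle two factors multiply to $\sqrt a$, which is exactly where the $\sqrt{a}$ in the statement originates. Since $x^x(1-x)^{1-x}\le1$ and $a\ge1$, the first factor is at most $1$.

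The main obstacle is the bookkeeping of constants. The ratio of the Stirling constants can reach $(e/\sqrt{2\pi})^{3}\approx1.3$, and the bounds must also cover the endpoint cases $l=1$ or $l=k-1$ when $a$ is close to $1$. The first thing I would try is to use Robbins' refined bounds $\Gamma(z+1)=\sqrt{2\pi z}\,(z/e)^z e^{r_z}$ with $\frac{1}{12z+1}<r_z<\frac{1}{12z}$. With these the constant is at most $e^{1/12+1/12}/\sqrt{2\pi}\cdot\ldots$, which is comfortably below $2$. In the small cases (any of $l$, $k-l$ equal to $1$), I would instead check the inequality directly: for example, with $l=1$ the left side equals $k\,\Gamma(a+1)\Gamma(a(k-1)+1)/\Gamma(ak+1)=k\,a\,B(a,a(k-1)+1)$, and monotonicity in $a$ reduces this case to a bound at $a=1$, where the left side equals $1$.

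The final alternative, if the constants remain tight, is to avoid Stirling entirely. By log-convexity of $\Gamma$, the function $a\mapsto\log$ of the left-hand side has a controlled derivative (a difference of digamma values), and one can bound its growth by $\tfrac12\log a+\log2$.
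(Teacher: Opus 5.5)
Your core argument is the paper's proof: you apply Stirling to all six Gamma factors, the $e^{-z}$ and $a$-power factors cancel, the square-root factors produce exactly $\sqrt{a}$, and the residual $\bigl[x^x(1-x)^{1-x}\bigr]^{k(a-1)}$ is at most $1$. The paper writes this last step as $l\ln l+(k-l)\ln(k-l)\le k\ln k$, and gets the constant $e^{1/4}$ from $\Gamma(z+1)=\sqrt{2\pi z}\,(z/e)^z e^{\theta_z/(12z)}$ with $\theta_z\in(0,1)$.

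The constants are not actually an obstacle. All six arguments are $\ge 1$, so your own crude bound $(e/\sqrt{2\pi})^3\approx 1.28<2$ already covers every case, including $l\in\{1,k-1\}$. The Beta rewriting and the small-case checks are therefore unnecessary. Moreover, the monotonicity in $a$ that you invoke only for $l=1$ holds for every $l$: the $a$-derivative of the logarithm of the left-hand side is $l\psi(al+1)+(k-l)\psi(a(k-l)+1)-k\psi(ak+1)<0$, because $\psi$ is increasing. This gives the sharper bound $\le 1$ (the value at $a=1$) with no Stirling at all.
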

	\begin{proof}
		For any $1\le l<k$, we write
		\[
		\binom{k}{l}\frac{\Gamma(al+1)\Gamma(a(k-l)+1)}{\Gamma(ak+1)}
		=
		\frac{\Gamma(k+1)}{\Gamma(l+1)\Gamma(k-l+1)}
		\frac{\Gamma(al+1)\Gamma(a(k-l)+1)}{\Gamma(ak+1)} .
		\]
		
		For $a\ge 1$, we recall Stirling’s formula: for $x>0$,
		\[
		\Gamma(x)
		=
		\left(\frac{x}{e}\right)^x
		\sqrt{\frac{2\pi}{x}}
		\,e^{\frac{\theta_x}{12x}},
		\qquad
		\Gamma(x+1)
		=
		\left(\frac{x}{e}\right)^x
		\sqrt{2\pi x}
		\,e^{\frac{\theta_x}{12x}},
		\]
		for some $\theta_x\in(0,1)$.
		
		Applying Stirling’s formula to each Gamma function yields
		\begin{align*}
			&\binom{k}{l}\frac{\Gamma(al+1)\Gamma(a(k-l)+1)}{\Gamma(ak+1)} \\
			=&
			\frac{\left(\frac{k}{e}\right)^k\sqrt{2\pi k}\,e^{\frac{\theta_k}{12k}}}
			{\left(\frac{l}{e}\right)^l\sqrt{2\pi l}\,e^{\frac{\theta_l}{12l}}
				\left(\frac{k-l}{e}\right)^{k-l}\sqrt{2\pi(k-l)}\,e^{\frac{\theta_{k-l}}{12(k-l)}}}
			\\
			&\quad\times
			\frac{
				\left(\frac{al}{e}\right)^{al}\sqrt{2\pi al}\,e^{\frac{\theta_{al}}{12al}}
				\left(\frac{a(k-l)}{e}\right)^{a(k-l)}\sqrt{2\pi a(k-l)}\,e^{\frac{\theta_{a(k-l)}}{12a(k-l)}}
			}
			{
				\left(\frac{ak}{e}\right)^{ak}\sqrt{2\pi ak}\,e^{\frac{\theta_{ak}}{12ak}}
			}.
		\end{align*}
		Collecting constants and using $\theta_x\le 1$, we obtain
		\[
		\binom{k}{l}\frac{\Gamma(al+1)\Gamma(a(k-l)+1)}{\Gamma(ak+1)}
		\le
		e^{1/4}\sqrt{a}\,
		\frac{k^k}{l^l(k-l)^{k-l}}
		\frac{l^{al}(k-l)^{a(k-l)}}{k^{ak}}
		=
		e^{1/4}\sqrt{a}\,
		\frac{l^{(a-1)l}(k-l)^{(a-1)(k-l)}}{k^{(a-1)k}}.
		\]
		
		Define
		\[
		f(x,y):=x^{(a-1)x}y^{(a-1)y},
		\qquad
		\ln f(x,y)=(a-1)(x\ln x+y\ln y).
		\]
		Since $x\mapsto x\ln x$ is convex on $(0,\infty)$, for $1\le l<k$ we have
		\[
		l\ln l+(k-l)\ln(k-l)
		\le
		1\ln 1+(k-1)\ln(k-1)
		\le
		k\ln k.
		\]
		Consequently,
		\[
		\ln f(l,k-l)
		\le
		(a-1)k\ln k,
		\qquad
		f(l,k-l)\le k^{(a-1)k}.
		\]
		
		Therefore, for all $a\ge 1$ and $1\le l<k$,
		\[
		\binom{k}{l}\frac{\Gamma(al+1)\Gamma(a(k-l)+1)}{\Gamma(ak+1)}
		\le
		e^{1/4}\sqrt{a}
		\le
		2\sqrt{a},
		\]
		which proves \eqref{eqn:combin_estimate_gamma}.

	\end{proof}
	\begin{prop}For any $n\in\mathbb{N}\cup\{\infty\}$, any $\lambda>0$, any $a\ge 1$, and any
		$f:\mathbb{N}\to\mathbb{R}_{\ge 0}$, the following estimate holds:
		\begin{equation}\label{eqn:combin_estimate_for_moment_Gamma}
			\sum_{k=1}^n \sum_{l=1}^{k-1}
			\binom{k}{l}
			\frac{
				m_{\alpha_1+l}\langle f\rangle\,
				m_{\alpha_1+\beta_1+k-l}\langle f\rangle\,
				\lambda^{ak}
			}{
				\Gamma(ak+1)
			}
			\le
			2\sqrt{a}\,
			\mathcal{E}_{a,\alpha_1}^n(\lambda)\langle f\rangle\,
			\mathcal{E}_{a,\alpha_1+\beta_1}^n(\lambda)\langle f\rangle.
		\end{equation}
		
	\end{prop}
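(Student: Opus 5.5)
The estimate follows from the Gamma-function bound \eqref{eqn:combin_estimate_gamma} together with the Cauchy product structure of the truncated shifted Mittag--Leffler sums. The plan is to rewrite each summand so that the weight $\lambda^{ak}/\Gamma(ak+1)$ splits into a factor attached to index $l$ and a factor attached to index $k-l$. Concretely, since $\lambda^{ak}=\lambda^{al}\lambda^{a(k-l)}$, for $k\ge 2$ and $1\le l<k$ we write
\[
\binom{k}{l}\frac{\lambda^{ak}}{\Gamma(ak+1)}
=
\binom{k}{l}\frac{\Gamma(al+1)\Gamma(a(k-l)+1)}{\Gamma(ak+1)}
\cdot\frac{\lambda^{al}}{\Gamma(al+1)}\cdot\frac{\lambda^{a(k-l)}}{\Gamma(a(k-l)+1)}.
\]
Applying \eqref{eqn:combin_estimate_gamma} bounds the first factor by $2\sqrt{a}$. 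Since all moments are nonnegative, the left-hand side of \eqref{eqn:combin_estimate_for_moment_Gamma} is then at most
\[
2\sqrt{a}\sum_{k=2}^n\sum_{l=1}^{k-1}
\frac{m_{\alpha_1+l}\langle f\rangle\lambda^{al}}{\Gamma(al+1)}\,
\frac{m_{\alpha_1+\beta_1+(k-l)}\langle f\rangle\lambda^{a(k-l)}}{\Gamma(a(k-l)+1)}.
\]
The $k=1$ term is absent because the inner sum is empty.

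The second step is the reindexing. We set $p=l$ and $q=k-l$. The constraints $k\le n$, $1\le l\le k-1$ translate into $p,q\ge 1$ with $p+q\le n$. Because every term is nonnegative, we enlarge the index set to the full square $1\le p,q\le n$. This gives the upper bound
\[
2\sqrt{a}\left(\sum_{p=1}^n\frac{m_{\alpha_1+p}\langle f\rangle\lambda^{ap}}{\Gamma(ap+1)}\right)\left(\sum_{q=1}^n\frac{m_{\alpha_1+\beta_1+q}\langle f\rangle\lambda^{aq}}{\Gamma(aq+1)}\right),
\]
which is exactly $2\sqrt{a}\,\mathcal{E}^n_{a,\alpha_1}(\lambda)\langle f\rangle\,\mathcal{E}^n_{a,\alpha_1+\beta_1}(\lambda)\langle f\rangle$ by definition.

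For $n=\infty$, the same argument applies with Tonelli's theorem justifying the rearrangement of nonnegative double series. This holds even if some quantities are infinite, in which case the inequality is trivial. If $\alpha_1=0$, the quantity $\mathcal{E}^n_{a,0}$ is read as $\mathcal{E}^n_a$, and nothing changes.

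There is no genuine obstacle here. The only points needing care are confirming that \eqref{eqn:combin_estimate_gamma} applies for every pair $(k,l)$ in the sum (it does, since $k\ge2$ and $1\le l<k$), and that enlarging the summation domain is legitimate by nonnegativity.
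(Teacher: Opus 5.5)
Your proposal is correct and follows the paper's own proof: you split the weight $\lambda^{ak}/\Gamma(ak+1)$, apply \eqref{eqn:combin_estimate_gamma} termwise, and then use nonnegativity to bound the Cauchy-product double sum by the full product of the two truncated shifted Mittag--Leffler sums. Your remarks on the empty $k=1$ term and on Tonelli's theorem for $n=\infty$ are details the paper leaves implicit.
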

	\begin{proof}
		It follows from \eqref{eqn:combin_estimate_gamma} that
		\begin{equation}\label{eqn:middle_step_est_for_combin}
			\begin{aligned}
				&\sum_{k=1}^n \sum_{l=1}^{k-1}
				\binom{k}{l}
				\frac{
					m_{\alpha_1+l}\langle f\rangle\,
					m_{\alpha_1+\beta_1+k-l}\langle f\rangle\,
					\lambda^{ak}
				}{
					\Gamma(ak+1)
				} \\
				\le{}&
				2\sqrt{a}
				\sum_{k=1}^n \sum_{l=1}^{k-1}
				\frac{m_{\alpha_1+l}\langle f\rangle\,\lambda^{al}}{\Gamma(al+1)}
				\frac{m_{\alpha_1+\beta_1+k-l}\langle f\rangle\,\lambda^{a(k-l)}}{\Gamma(a(k-l)+1)} \\
				\le{}&
				2\sqrt{a}
				\left(
				\sum_{i=1}^n
				\frac{m_{\alpha_1+i}\langle f\rangle\,\lambda^{ai}}{\Gamma(ai+1)}
				\right)
				\left(
				\sum_{j=1}^n
				\frac{m_{\alpha_1+\beta_1+j}\langle f\rangle\,\lambda^{aj}}{\Gamma(aj+1)}
				\right) \\
				={}&
				2\sqrt{a}\,
				\mathcal{E}_{a,\alpha_1}^n(\lambda)\langle f\rangle\,
				\mathcal{E}_{a,\alpha_1+\beta_1}^n(\lambda)\langle f\rangle .
			\end{aligned}
		\end{equation}
		
	\end{proof}
	\subsection{Proof of Theorem \ref{thm:propagation_of_m_l_tails}}		\label{SecTh6}
	
	We aim to show that, for $\lambda>0$ sufficiently small, the set
	\[
	\mathcal{T}
	:=
	\Bigl\{
	f\in \ell^1(\mathbb{N}) :
	f\ge 0,\ 
	m_1\langle f\rangle\le \mathfrak{m}_1,\ 
	m_{\kappa}\langle f\rangle\le \mathfrak{m}_{\kappa},\ 
	\mathcal{E}_a^\infty(\lambda)\langle f\rangle\le 1
	\Bigr\}
	\]
	satisfies the sub-tangency condition
	\begin{equation}\label{eqn:sub_tangency_for_mittag}
		\forall f\in \mathcal{T},\qquad
		\liminf_{\tau\to 0^+}
		\frac{1}{\tau}\,
		\mathrm{dist}_{\ell^1}(f+\tau O[f],\mathcal{T})
		=0 .
	\end{equation}
	
	Recalling Proposition~\ref{prop:existence_viable_set}, for
	$\mathfrak{m}_1:=m_1\langle f_0\rangle$, we accordingly fix \(\mathfrak{m}_{\kappa}
	:=
	\max\bigl\{m_{\kappa}\langle f_0\rangle,\,2\mathcal{B}_k\left(\mathfrak{m}_1\right)
	\bigr\}.\) Then the set
	\[
	\mathcal{S}
	=
	\Bigl\{
	f\in \ell^1(\mathbb{N}) :
	f\ge 0,\ 
	m_1\langle f\rangle\le \mathfrak{m}_1,\ 
	m_{\kappa}\langle f\rangle\le \mathfrak{m}_{\kappa}
	\Bigr\}
	\]
	satisfies the sub-tangency condition \eqref{eqn:sub_tangeny_condition}.
	
	As in the proof of Proposition~\ref{prop:existence_viable_set}, it suffices
	to consider nonzero functions $f\in\mathcal{T}$. There exists $N_0$ such that,
	for all $N>N_0$ and for $\tau>0$ sufficiently small (depending on
	$\mathfrak{m}_1,\mathfrak{m}_{\kappa},h$, and $N$), the truncated function
	$f_N(i):=f_i\,\mathbf{1}_{\{i\le N\}}$ is nonzero, and the perturbation
	\[
	w_N:=f+\tau O[f_N]
	\]
	satisfies
	\[
	w_N\ge 0,\qquad
	m_1\langle w_N\rangle\le \mathfrak{m}_1,\qquad
	m_{\kappa}\langle w_N\rangle\le \mathfrak{m}_{\kappa}.
	\]
	
	Using the combinatorial estimate \eqref{eqn:combin_estimate_for_moment}, we
	obtain
	\begin{align*}
		&\mathcal{E}_a^\infty(\lambda)\langle O[f_N]\rangle
		=
		\sum_{i=1}^\infty O[f_N](i)\,\mathcal{E}_a(\lambda^a\bfi)
		=
		\sum_{k=1}^\infty
		\frac{m_k\langle O[f_N]\rangle\,\lambda^{ak}}{\Gamma(ak+1)} \\
		&\qquad\le
		\sum_{k=1}^\infty
		\frac{m_k\langle S[f_N]\rangle\,\lambda^{ak}}{\Gamma(ak+1)}
		-
		\sum_{k=1}^\infty
		\frac{m_k\langle V[f_N]\rangle\,\lambda^{ak}}{\Gamma(ak+1)} \\
		&\qquad\le
		2^{\beta_1+1}
		\sum_{k=1}^\infty
		\sum_{l=1}^{k-1}
		\binom{k}{l}
		\frac{
			m_{\alpha_1+l}\langle f_N\rangle\,
			m_{\alpha_1+\beta_1+k-l}\langle f_N\rangle\,
			\lambda^{ak}
		}{
			\Gamma(ak+1)
		}
		-
		\sum_{k=1}^\infty
		\frac{m_{\delta+k}\langle f_N\rangle\,\lambda^{ak}}{\Gamma(ak+1)} .
	\end{align*}

	It follows from \eqref{eqn:combin_estimate_for_moment_Gamma} that
	\begin{align*}
		\mathcal{E}_{a}^\infty(\lambda)\langle O[f_N]\rangle
		&\le
		2^{\beta_1+2}\sqrt{a}\,
		\mathcal{E}_{a,\alpha_1}^\infty(\lambda)\langle f_N\rangle\,
		\mathcal{E}_{a,\alpha_1+\beta_1}^\infty(\lambda)\langle f_N\rangle
		-
		\mathcal{E}_{a,\delta}^\infty(\lambda)\langle f_N\rangle .
	\end{align*}
	Note that
	\[
	\mathcal{E}_{a,\delta}^\infty(\lambda)\langle f_N\rangle
	=
	\sum_{i=1}^\infty f_N(i)\,\bfi^\delta\,\mathcal{E}_a(\lambda^a\bfi)
	=
	\sum_{i=1}^N f_N(i)\,\bfi^\delta\,\mathcal{E}_a(\lambda^a\bfi)
	<\infty,
	\]
	so all terms above are well defined and the computation is justified.
	
	Recalling the interpolation inequality \eqref{eqn:m_l_interpolation_prod}, we have
	\[
	\mathcal{E}_{a,\alpha_1}^\infty(\lambda)
	\le
	\bigl(\mathcal{E}_{a}^\infty(\lambda)\bigr)^{\frac{\delta-\alpha_1}{\delta}}
	\bigl(\mathcal{E}_{a,\delta}^\infty(\lambda)\bigr)^{\frac{\alpha_1}{\delta}},
	\qquad
	\mathcal{E}_{a,\alpha_1+\beta_1}^\infty(\lambda)
	\le
	\bigl(\mathcal{E}_{a}^\infty(\lambda)\bigr)^{\frac{\delta-\alpha_1-\beta_1}{\delta}}
	\bigl(\mathcal{E}_{a,\delta}^\infty(\lambda)\bigr)^{\frac{\alpha_1+\beta_1}{\delta}}.
	\]
	Since $f\in\mathcal{T}$, we have
	\[
	\mathcal{E}_a^\infty(\lambda)\langle f_N\rangle
	\le
	\mathcal{E}_a^\infty(\lambda)\langle f\rangle
	\le 1.
	\]
	Therefore,
	\begin{equation}\label{eqn:middle_step_propaga_m_l}
		\begin{aligned}
			\mathcal{E}_{a}^\infty(\lambda)\langle O[f_N]\rangle
			&\le
			2^{\beta_1+2}\sqrt{a}\,
			\bigl(\mathcal{E}_{a,\delta}^\infty(\lambda)\langle f_N\rangle\bigr)^{\frac{2\alpha_1+\beta_1}{\delta}}
			-
			\mathcal{E}_{a,\delta}^\infty(\lambda)\langle f_N\rangle \\
			&\le
			\frac{C}{2}\,
			(\sqrt{a})^{\frac{\delta}{\delta-2\alpha_1-\beta_1}}
			-
			\frac{1}{2}\,
			\mathcal{E}_{a,\delta}^\infty(\lambda)\langle f_N\rangle ,
		\end{aligned}
	\end{equation}
	where the second inequality follows from Young’s inequality, using the assumption
	$\delta>2\alpha_1+\beta_1$. The constant $C>0$ depends only on
	$\alpha_1,\beta_1$, and $\delta$.
	
	Finally, recalling the definition of $\mathcal{E}_{a,\delta}^\infty(\lambda)$, we observe that
	\[
	\mathcal{E}_{a,\delta}^\infty(\lambda)\langle f_N\rangle
	=
	\sum_{k=1}^\infty
	\frac{m_{k+\delta}\langle f_N\rangle\,\lambda^{ak}}{\Gamma(ak+1)}
	\ge
	\sum_{k=1}^\infty
	\sum_{\{\,i:\,\bfi\ge \lambda^{-a}\,\}}
	f_N(i)\,
	\frac{\bfi^{k+\delta}\lambda^{ak}}{\Gamma(ak+1)}.
	\]

	Note that
	\[
	\bfi \ge \lambda^{-a} \;\Longrightarrow\; \bfi^{k+\delta}\ge \frac{\bfi^{k}}{\lambda^{\delta a}},
	\qquad
	\bfi < \lambda^{-a} \;\Longrightarrow\; \bfi^{k}\le \frac{\bfi}{\lambda^{(k-1)a}} .
	\]
	It follows that
	\begin{align*}
		&\mathcal{E}_{a,\delta}^\infty(\lambda)\langle f_N\rangle
		\ge
		\frac{1}{\lambda^{\delta a}}
		\sum_{k=1}^\infty
		\sum_{\{\,i:\,\bfi\ge \lambda^{-a}\,\}}
		f_N(i)\,
		\frac{\bfi^{k}\lambda^{ak}}{\Gamma(ak+1)} \\
		&\qquad=
		\frac{1}{\lambda^{\delta a}}
		\left(
		\sum_{k=1}^\infty \sum_{i=1}^\infty
		f_N(i)\frac{\bfi^{k}\lambda^{ak}}{\Gamma(ak+1)}
		-
		\sum_{k=1}^\infty
		\sum_{\{\,i:\,\bfi<\lambda^{-a}\,\}}
		f_N(i)\frac{\bfi^{k}\lambda^{ak}}{\Gamma(ak+1)}
		\right) \\
		&\qquad\ge
		\frac{1}{\lambda^{\delta a}}
		\left(
		\mathcal{E}_a^\infty(\lambda)\langle f_N\rangle
		-
		\sum_{k=1}^\infty
		\sum_{i=1}^\infty
		f_N(i)\,
		\frac{\bfi}{\lambda^{(k-1)a}}
		\frac{\lambda^{ak}}{\Gamma(ak+1)}
		\right) \\
		&\qquad=
		\frac{1}{\lambda^{\delta a}}
		\left(
		\mathcal{E}_a^\infty(\lambda)\langle f_N\rangle
		-
		\lambda^a m_1\langle f_N\rangle
		\sum_{k=1}^\infty \frac{1}{\Gamma(ak+1)}
		\right) \\
		&\qquad\ge
		\frac{1}{\lambda^{\delta a}}
		\mathcal{E}_a^\infty(\lambda)\langle f_N\rangle
		-
		\frac{\mathfrak{m}_1}{\lambda^{(\delta-1)a}}
		\mathcal{E}_a(1),
	\end{align*}
	where we used $m_1\langle f_N\rangle\le \mathfrak{m}_1$.
	
	Combining this with \eqref{eqn:middle_step_propaga_m_l}, we obtain
	\begin{align*}
		\mathcal{E}_a^\infty(\lambda)\langle O[f_N]\rangle
		&\le
		\frac{C}{2}
		\left(\sqrt{a}\right)^{\frac{\delta}{\delta-2\alpha_1-\beta_1}}
		+
		\frac{\mathfrak{m}_1}{2\lambda^{(\delta-1)a}}
		\mathcal{E}_a(1)
		-
		\frac{1}{2\lambda^{\delta a}}
		\mathcal{E}_a^\infty(\lambda)\langle f_N\rangle \\
		&=
		\frac{1}{2\lambda^{\delta a}}
		\Bigl(
		\lambda^{\delta a} C
		\left(\sqrt{a}\right)^{\frac{\delta}{\delta-2\alpha_1-\beta_1}}
		+
		\lambda^a \mathfrak{m}_1 \mathcal{E}_a(1)
		-
		\mathcal{E}_a^\infty(\lambda)\langle f_N\rangle
		\Bigr).
	\end{align*}
	
	Choosing $\lambda$ as in \eqref{eqn:con_for_propaga_of_m_l_tail}, we ensure that
	\[
	\lambda^{\delta a} C
	\left(\sqrt{a}\right)^{\frac{\delta}{\delta-2\alpha_1-\beta_1}}
	+
	\lambda^a \mathfrak{m}_1 \mathcal{E}_a(1)
	<
	\frac{1}{2}.
	\]
	
	We now distinguish two cases.
	
	\begin{enumerate}
		\item
		If $\mathcal{E}_a^\infty(\lambda)\langle f\rangle\le \tfrac12$, then
		\begin{align*}
			\mathcal{E}_a^\infty(\lambda)\langle w_N\rangle
			&=
			\mathcal{E}_a^\infty(\lambda)\langle f\rangle
			+
			\tau\,\mathcal{E}_a^\infty(\lambda)\langle O[f_N]\rangle \\
			&\le
			\frac12
			+
			\tau
			\left(
			\frac{C}{2}
			\left(\sqrt{a}\right)^{\frac{\delta}{\delta-2\alpha_1-\beta_1}}
			+
			\frac{\mathfrak{m}_1}{2\lambda^{(\delta-1)a}}
			\mathcal{E}_a(1)
			\right)
			\le 1,
		\end{align*}
		provided $\tau>0$ is sufficiently small.
		
		\item
		If $\mathcal{E}_a^\infty(\lambda)\langle f\rangle>\tfrac12$, then for $N$ large enough
		$\mathcal{E}_a^\infty(\lambda)\langle f_N\rangle>\tfrac12$, and hence
		\[
		\mathcal{E}_a^\infty(\lambda)\langle w_N\rangle
		=
		\mathcal{E}_a^\infty(\lambda)\langle f\rangle
		+
		\tau\,\mathcal{E}_a^\infty(\lambda)\langle O[f_N]\rangle
		\le
		\mathcal{E}_a^\infty(\lambda)\langle f\rangle
		\le 1.
		\]
	\end{enumerate}
	
	In both cases, we conclude that $w_N\in\mathcal{T}$. Arguing as in the proof of
	Proposition~\ref{prop:existence_viable_set}, the Hölder continuity of $O$ implies
	the sub-tangency condition \eqref{eqn:sub_tangency_for_mittag}.
	Since $\mathcal{T}$ is a closed convex subset of $\mathcal{S}$, the arguments in
	Theorem~\ref{thm:GWP} apply verbatim with $\mathcal{S}$ replaced by $\mathcal{T}$.
	
	Finally, since $f_0\in\mathcal{T}$ and the solution is unique, we obtain
	\[
	\sum_{i=1}^\infty f_i(t)\,\mathcal{E}_a(\lambda^a\bfi)\le 1,
	\qquad
	\forall\, t\in[0,\infty).
	\]
	
	\subsection{Proof of Theorem \ref{thm:creation_of_exp_moments}}		\label{SecTh7}
	To simplify the notation, we set
	\[
	\mathfrak{d}:=\lfloor \delta \rfloor,
	\qquad
	\mathfrak{m}_1:=m_1\langle f_0\rangle .
	\]
	
	It follows from Theorem~\ref{thm:energy_decay} and
	Theorem~\ref{thm:creation_of_polynomial_moments} that, for any $k\ge 1$,
	\begin{align*}
		m_k\left\langle f(t)\right\rangle
		&\le
		\left(\frac{2(k-1)}{\delta t}\right)^{\frac{k-1}{\delta}}
		\mathfrak{m}_1
		+\mathcal{B}_k(\mathfrak{m}_1) \\
		&\le
		C_k(\mathfrak{m}_1)\left(t^{-\frac{k-1}{\delta}}+1\right).
	\end{align*}
	
	Since $\mathfrak{d}=\lfloor \delta \rfloor\le \delta$, we have
	\[
	\frac{k-1}{\delta}\le \frac{k-1}{\mathfrak{d}}.
	\]
	Consequently, for $t\in(0,1]$,
	\[
	t^{-\frac{k-1}{\delta}}
	\le
	t^{-\frac{k-1}{\mathfrak{d}}},
	\]
	and therefore
	\begin{equation}\label{eqn:estimate_for_mk_in_creation_of_exp_mom}
		m_k\left\langle f(t)\right\rangle
		\le
		C_k(\mathfrak{m}_1)
		\left(t^{-\frac{k-1}{\mathfrak{d}}}+1\right).
	\end{equation}
	
	As a result, for any $\lambda>0$, since $\mathfrak{d}\ge 1$, we obtain
	\begin{align}
		\mathcal{E}_1^n\!\left(\lambda t^{\frac{1}{\mathfrak{d}}}\right)\left\langle f(t)\right\rangle
		&:=
		\sum_{k=1}^n
		\frac{m_k\left\langle f(t)\right\rangle\bigl(\lambda t^{\frac{1}{\mathfrak{d}}}\bigr)^k}{k!} \notag \\
		&\le
		\sum_{k=1}^n
		\frac{
			C_k(\mathfrak{m}_1)
			\left(t^{-\frac{k-1}{\mathfrak{d}}}+1\right)
			\left(\lambda t^{\frac{1}{\mathfrak{d}}}\right)^k
		}{k!} \notag \\
		&\le
		\tilde{C}_n(\mathfrak{m}_1,\lambda)\, t^{\frac{1}{\mathfrak{d}}},
		\label{eqn:estimate_for_small_time_m_l}
	\end{align}
	for all $t\in(0,1]$.

	For an integer $n>\mathfrak{d}$, $0<\lambda\le 1$, and $0<\theta\le 1$, we define
	\[
	T_n
	:=
	\sup\left\{
	t\in(0,1]:
	\mathcal{E}_1^n\!\left(\lambda t^{\frac{1}{\mathfrak{d}}}\right)\left\langle f(t)\right\rangle
	\le
	t^{\frac{1-\theta}{\mathfrak{d}}}
	\right\}.
	\]
	
	By \eqref{eqn:estimate_for_small_time_m_l}, the quantity $T_n$ is well defined and satisfies
	$T_n>0$ for every $n$.
	Our goal is to show that, for $\lambda>0$ sufficiently small, one has
	\[
	T_n=1
	\qquad
	\text{for all } n>\mathfrak{d} \text{ and all } \theta\in(0,1].
	\]
	
	For $n\ge 1$, differentiating $\mathcal{E}_1^n(\lambda t^{1/\mathfrak{d}})\left\langle f(t)\right\rangle$ yields
	\begin{equation}\label{eqn:core_estimate_in_creation_exp_mom}
		\begin{aligned}
			\frac{d}{dt}\mathcal{E}_1^n\!\left(\lambda t^{\frac{1}{\mathfrak{d}}}\right)\left\langle f(t)\right\rangle
			&=
			\frac{d}{dt}
			\sum_{k=1}^n
			m_k\left\langle f(t)\right\rangle\frac{(\lambda t^{\frac{1}{\mathfrak{d}}})^k}{k!} \\
			&=
			\sum_{k=1}^n
			m_k\left\langle O[f(t)]\right\rangle\frac{(\lambda t^{\frac{1}{\mathfrak{d}}})^k}{k!}
			+
			\frac{\lambda}{\mathfrak{d}\, t^{\frac{\mathfrak{d}-1}{\mathfrak{d}}}}
			\sum_{k=1}^n
			m_k\left\langle f(t)\right\rangle\frac{(\lambda t^{\frac{1}{\mathfrak{d}}})^{k-1}}{(k-1)!}.
		\end{aligned}
	\end{equation}
	
	Proceeding as in the proof of Theorem~\ref{thm:propagation_of_m_l_tails}, we obtain
	\begin{align*}
		\sum_{k=1}^n
		m_k\left\langle O[f(t)]\right\rangle\frac{(\lambda t^{\frac{1}{\mathfrak{d}}})^k}{k!}
		&\le
		2^{\beta_1+2}
		\mathcal{E}_{1,\alpha_1}^n\!\left(\lambda t^{\frac{1}{\mathfrak{d}}}\right)
		\mathcal{E}_{1,\alpha_1+\beta_1}^n\!\left(\lambda t^{\frac{1}{\mathfrak{d}}}\right)
		-
		\mathcal{E}_{1,\delta}^n\!\left(\lambda t^{\frac{1}{\mathfrak{d}}}\right) \\
		&\le
		2^{\beta_1+2}
		\left(\mathcal{E}_1^n\!\left(\lambda t^{\frac{1}{\mathfrak{d}}}\right)\right)^{\frac{2\delta-2\alpha_1-\beta_1}{\delta}}
		\left(\mathcal{E}_{1,\delta}^n\!\left(\lambda t^{\frac{1}{\mathfrak{d}}}\right)\right)^{\frac{2\alpha_1+\beta_1}{\delta}}
		-
		\mathcal{E}_{1,\delta}^n\!\left(\lambda t^{\frac{1}{\mathfrak{d}}},t\right).
	\end{align*}
	
	On the interval $(0,T_n]$, by definition of $T_n$ we have
	\[
	\mathcal{E}_1^n\!\left(\lambda t^{\frac{1}{\mathfrak{d}}}\right)\left\langle f(t)\right\rangle
	\le
	t^{\frac{1-\theta}{\mathfrak{d}}}
	\le 1,
	\]
	and therefore
	\begin{equation}\label{eqn:estimate_1_in_creation_of_exp_mom}
		\begin{aligned}
			\sum_{k=1}^n
			m_k\left\langle O[f(t)]\right\rangle\frac{(\lambda t^{\frac{1}{\mathfrak{d}}})^k}{k!}
			&\le
			2^{\beta_1+2}
			\left(\mathcal{E}_{1,\delta}^n\!\left(\lambda t^{\frac{1}{\mathfrak{d}}}\right)\right)^{\frac{2\alpha_1+\beta_1}{\delta}}
			-
			\mathcal{E}_{1,\delta}^n\!\left(\lambda t^{\frac{1}{\mathfrak{d}}}\right) \\
			&\le
			C
			-
			\frac{1}{2}
			\mathcal{E}_{1,\delta}^n\!\left(\lambda t^{\frac{1}{\mathfrak{d}}}\right)\left\langle f(t)\right\rangle,
		\end{aligned}
	\end{equation}
	where the constant $C>0$ depends only on $\alpha_1$, $\beta_1$, and $\delta$.

	For the remaining term in \eqref{eqn:core_estimate_in_creation_exp_mom},
	recalling \eqref{eqn:estimate_for_mk_in_creation_of_exp_mom}, we obtain
	\begin{equation}\label{eqn:estimate_2_in_creation_of_exp_mom}
		\begin{aligned}
			&\frac{\lambda}{\mathfrak{d}\, t^{\frac{\mathfrak{d}-1}{\mathfrak{d}}}}
			\sum_{k=1}^n
			m_k\left\langle f(t)\right\rangle\frac{\left(\lambda t^{\frac{1}{\mathfrak{d}}}\right)^{k-1}}{(k-1)!} \\
			&\quad=
			\frac{\lambda}{\mathfrak{d}\, t^{\frac{\mathfrak{d}-1}{\mathfrak{d}}}}
			\sum_{k=\mathfrak{d}+1}^n
			m_k\left\langle f(t)\right\rangle\frac{\left(\lambda t^{\frac{1}{\mathfrak{d}}}\right)^{k-1}}{(k-1)!}
			+
			\frac{\lambda}{\mathfrak{d}\, t^{\frac{\mathfrak{d}-1}{\mathfrak{d}}}}
			\sum_{k=1}^{\mathfrak{d}}
			m_k\left\langle f(t)\right\rangle\frac{\left(\lambda t^{\frac{1}{\mathfrak{d}}}\right)^{k-1}}{(k-1)!}.
		\end{aligned}
	\end{equation}
	
	Using the uniform bound on low-order moments, the second sum is controlled by
	\[
	\frac{\lambda}{\mathfrak{d}\, t^{\frac{\mathfrak{d}-1}{\mathfrak{d}}}}
	\sum_{k=1}^{\mathfrak{d}}
	m_k\left\langle f(t)\right\rangle\frac{\left(\lambda t^{\frac{1}{\mathfrak{d}}}\right)^{k-1}}{(k-1)!}
	\le
	C(\mathfrak{m}_1)\,\frac{\lambda^{\mathfrak{d}}}{t^{\frac{\mathfrak{d}-1}{\mathfrak{d}}}}.
	\]
	
	For the first sum, we rewrite
	\begin{align*}
		&\frac{\lambda}{\mathfrak{d}\, t^{\frac{\mathfrak{d}-1}{\mathfrak{d}}}}
		\sum_{k=\mathfrak{d}+1}^n
		m_k\left\langle f(t)\right\rangle\frac{\left(\lambda t^{\frac{1}{\mathfrak{d}}}\right)^{k-1}}{(k-1)!} \\
		&\quad=
		\frac{\lambda^{\mathfrak{d}}}{\mathfrak{d}}
		\sum_{k=1}^{n-\mathfrak{d}}
		m_{k+\mathfrak{d}}\left\langle f(t)\right\rangle
		\frac{\left(\lambda t^{\frac{1}{\mathfrak{d}}}\right)^k}{(k+\mathfrak{d}-1)!}
		\;\le\;
		\frac{\lambda^{\mathfrak{d}}}{\mathfrak{d}}
		\sum_{k=1}^n
		m_{k+\mathfrak{d}}\left\langle f(t)\right\rangle
		\frac{\left(\lambda t^{\frac{1}{\mathfrak{d}}}\right)^k}{k!}.
	\end{align*}
	
	Consequently,
	\[
	\frac{\lambda}{\mathfrak{d}\, t^{\frac{\mathfrak{d}-1}{\mathfrak{d}}}}
	\sum_{k=1}^n
	m_k\left\langle f(t)\right\rangle\frac{\left(\lambda t^{\frac{1}{\mathfrak{d}}}\right)^{k-1}}{(k-1)!}
	\le
	\frac{\lambda^{\mathfrak{d}}}{\mathfrak{d}}\,
	\mathcal{E}_{1,\mathfrak{d}}^n\!\left(\lambda t^{\frac{1}{\mathfrak{d}}}\right)\left\langle f(t)\right\rangle
	+
	C(\mathfrak{m}_1)\,\frac{\lambda^{\mathfrak{d}}}{t^{\frac{\mathfrak{d}-1}{\mathfrak{d}}}}.
	\]
	
	Combining \eqref{eqn:core_estimate_in_creation_exp_mom},
	\eqref{eqn:estimate_1_in_creation_of_exp_mom}, and
	\eqref{eqn:estimate_2_in_creation_of_exp_mom}, we arrive at
	\begin{equation}\label{eqn:final_diff_ineq_creation_exp}
		\frac{d}{dt}\mathcal{E}_1^n\!\left(\lambda t^{\frac{1}{\mathfrak{d}}}\right)\left\langle f(t)\right\rangle
		\le
		C
		+
		C(\mathfrak{m}_1)\,\frac{\lambda^{\mathfrak{d}}}{t^{\frac{\mathfrak{d}-1}{\mathfrak{d}}}}
		-
		\left(
		\frac{1}{2}
		-
		\frac{\lambda^{\mathfrak{d}}}{\mathfrak{d}}
		\right)
		\mathcal{E}_{1,\mathfrak{d}}^n\!\left(\lambda t^{\frac{1}{\mathfrak{d}}}\right)\left\langle f(t)\right\rangle,
	\end{equation}
	where the constants depend only on $\alpha_1,\beta_1,\delta$ and $\mathfrak{m}_1$.
	
	For $\lambda < \left(\frac{\mathfrak{d}}{4}\right)^{1/\mathfrak{d}}$, the previous estimate yields
	\[
	\frac{d}{dt}\mathcal{E}_{1}^n\!\left(\lambda t^{\frac{1}{\mathfrak{d}}}\right)\left\langle f(t)\right\rangle
	\le
	C
	+
	C(\mathfrak{m}_1)\frac{\lambda^{\mathfrak{d}}}{t^{\frac{\mathfrak{d}-1}{\mathfrak{d}}}}
	-
	\frac{1}{4}\mathcal{E}_{1,\mathfrak{d}}^n\!\left(\lambda t^{\frac{1}{\mathfrak{d}}}\right)\left\langle f(t)\right\rangle.
	\]
	
	Since $\lambda \le 1$, similar as in the Proof of Theorem \ref{thm:propagation_of_m_l_tails}, we estimate
	\begin{align*}
		&\mathcal{E}_{1,\mathfrak{d}}^n\!\left(\lambda t^{\frac{1}{\mathfrak{d}}}\right)\left\langle f(t)\right\rangle
		=
		\sum_{k=1}^n
		m_{k+\mathfrak{d}}\left\langle f(t)\right\rangle\frac{\left(\lambda t^{\frac{1}{\mathfrak{d}}}\right)^k}{k!} \\
		&\quad =
		\frac{1}{t\lambda^{\mathfrak{d}}}
		\sum_{k=\mathfrak{d}+1}^{n+\mathfrak{d}}
		m_k\left\langle f(t)\right\rangle\frac{\left(\lambda t^{\frac{1}{\mathfrak{d}}}\right)^k}{(k-\mathfrak{d})!}\ge
		\frac{1}{t\lambda^{\mathfrak{d}}}
		\sum_{k=\mathfrak{d}+1}^{n}
		m_k\left\langle f(t)\right\rangle\frac{\left(\lambda t^{\frac{1}{\mathfrak{d}}}\right)^k}{k!} \\
		&\quad=
		\frac{1}{t\lambda^{\mathfrak{d}}}
		\mathcal{E}_1^n\!\left(\lambda t^{\frac{1}{\mathfrak{d}}}\right)\left\langle f(t)\right\rangle
		-
		\frac{1}{t\lambda^{\mathfrak{d}}}
		\sum_{k=1}^{\mathfrak{d}}
		m_k\left\langle f(t)\right\rangle\frac{\left(\lambda t^{\frac{1}{\mathfrak{d}}}\right)^k}{k!} \\
		&\quad\ge
		\frac{1}{t\lambda^{\mathfrak{d}}}
		\mathcal{E}_1^n\!\left(\lambda t^{\frac{1}{\mathfrak{d}}}\right)\left\langle f(t)\right\rangle
		-
		\frac{C'(\mathfrak{m}_1)}{t^{\frac{\mathfrak{d}-1}{\mathfrak{d}}}\lambda^{\mathfrak{d}-1}} .
	\end{align*}
	
	Since $t\le 1$ and $\lambda\le 1$, combining the two estimates yields
	\[
	\frac{d}{dt}\mathcal{E}_1^n\!\left(\lambda t^{\frac{1}{\mathfrak{d}}},t\right)
	\le
	\frac{C''(\mathfrak{m}_1)}{t^{\frac{\mathfrak{d}-1}{\mathfrak{d}}}\lambda^{\mathfrak{d}-1}}
	-
	\frac{1}{4}\frac{1}{t\lambda^{\mathfrak{d}}}
	\mathcal{E}_1^n\!\left(\lambda t^{\frac{1}{\mathfrak{d}}},t\right).
	\]
	
	Choosing $\lambda < \frac{1}{4C''(\mathfrak{m}_1)}$, we ensure that
	\[
	\frac{\mathfrak{d}C''(\mathfrak{m}_1)\lambda}{\frac{1}{4}\mathfrak{d}+\lambda^{\mathfrak{d}}} < 1,
	\]
	which implies
	\[
	\mathcal{E}_{1,\mathfrak{d}}^n\!\left(\lambda t^{\frac{1}{\mathfrak{d}}},t\right)
	<
	t^{\frac{1}{\mathfrak{d}}},
	\qquad \forall\, t\in(0,T_n].
	\]
	
	By continuity of $\mathcal{E}_{1,\mathfrak{d}}^n$, this forces $T_n=1$ for all
	$n>\mathfrak{d}$ and all $\theta\in(0,1]$. Hence, for all $t\in(0,1]$,
	\[
	\mathcal{E}_1^n\!\left(\lambda t^{\frac{1}{\mathfrak{d}}},t\right)
	<
	t^{\frac{1-\theta}{\mathfrak{d}}},
	\qquad \forall\, n>\mathfrak{d},\ \theta\in(0,1].
	\]
	
	Letting $\theta\to 0$ and $n\to\infty$, we obtain
	\begin{equation}\label{eqn:estimate_final_exp_creation}
		\sum_{i=1}^\infty
		f_i(t)\,\mathcal{E}_1\!\left(\lambda t^{\frac{1}{\mathfrak{d}}}\bfi\right)
		\le
		t^{\frac{1}{\mathfrak{d}}}.
	\end{equation}
	
	Recalling Remark~\ref{rmk:estimate_for_m_l_function}, we further deduce
	\[
	\sum_{i=1}^\infty
	f_i(t)\,\lambda t^{\frac{1}{\mathfrak{d}}}\bfi
	e^{\frac{\lambda t^{\frac{1}{\mathfrak{d}}}}{2}\bfi}
	\le
	\sum_{i=1}^\infty
	f_i(t)\,\mathcal{E}_1\!\left(\lambda t^{\frac{1}{\mathfrak{d}}}\bfi\right)
	\le
	t^{\frac{1}{\mathfrak{d}}},
	\]
	and therefore, for all $t\in(0,1]$,
	\[
	\sum_{i=1}^\infty
	f_i(t)\,\bfi
	e^{\frac{\lambda t^{\frac{1}{\mathfrak{d}}}}{2}\bfi}
	\le
	\frac{1}{\lambda}.
	\]
	
	For $t\ge 1$, since estimate \eqref{eqn:estimate_final_exp_creation} implies that
	\[
	\sum_{i=1}^\infty
	f_i(1)\,\mathcal{E}_1(\lambda\bfi)
	\le 1.
	\]
	Taking $\lambda$ small enough as in \eqref{eqn:con_for_propaga_of_m_l_tail}, Theorem~\ref{thm:propagation_of_m_l_tails} implies that for all
	$t\ge 1$,
	\[
	\sum_{i=1}^\infty
	f_i(t)\,\bfi e^{\frac{\lambda}{2}\bfi}
	\le \frac{1}{\lambda}.
	\]
	
	The conclusion follows upon replacing $\lambda$ by $2\lambda$ and recalling
	$\mathfrak{d}=\lfloor\delta\rfloor$.

	\section{Numerical Tests}\label{Sec:Numerical}
	
	In this section, 
	we present numerical experiments for the parameter choices
	\[
	\alpha_n=\beta_n=0.1, \qquad \delta=0.4.
	\]
	
	The computational domain is taken to be $[0,50]$ with spatial mesh size $h=0.1$.
	For clarity of presentation, we display the numerical solutions only on the
	interval $[0,10]$, since the solution becomes negligibly small outside this
	range.
	
	We perform a forward Euler scheme in time with the time step $\Delta t=0.001$.
	Because the dissipation induced by the $V$ term leads to rapid decay of the
	solution, our numerical investigation focuses on this short-time regime.
	
	Our objective is to numerically verify several key analytical properties of the model, namely the strict positivity of solutions established in Theorem~\ref{thm:creation_of_positivity}, the propagation of polynomial moments proved in Corollary~\ref{Corr:Poly}, and the propagation of Mittag--Leffler moments established in Theorem~\ref{thm:propagation_of_m_l_tails}. 
	
	We emphasize that, in all numerical simulations, the computational domain must be truncated, which implies that the initial data are necessarily compactly supported. As a consequence, the numerical experiments cannot directly capture the creation of polynomial or Mittag--Leffler moments from  initial data that are not compactly supported. Therefore, the results concerning the creation of such moments should be understood as purely theoretical in nature, while the numerical simulations serve to illustrate the preservation and qualitative behavior of these moments within the compactly supported initial data setting.
	
	%	Computation shows that, the constant \(C>0\) appears in \eqref{eqn:middle_step_propaga_m_l} has the following explicit formula,
	%	\[C=2\frac{\delta-2\alpha_1-\beta_1}{\delta}\left(2\frac{2\alpha_1+\beta_1}{\delta}\right)^{\frac{2\alpha_1+\beta_1}{\delta-2\alpha_1-\beta_1}}\left(2^{\beta_1+2}\right)^{\frac{\delta}{\delta-2\alpha_1-\beta_1}}.\]
	%	
	%	Hence the upper bound for \(\lambda\) in \eqref{eqn:con_for_propaga_of_m_l_tail} is computable if the initial data is given. In the following numerical test, we will also verify the result in Theorem \ref{thm:propagation_of_m_l_tails} by comparing the behavior of the Mittag–Leffler moment for values of \(\lambda\) below and above the computed upper bound.
	\subsection{Test 1} 
	We choose the initial data \(f_0\) as follows
	\begin{figure}[H]
		\centering
		\begin{minipage}{0.49\textwidth} 
			
			%	\[\psi(x)=\begin{cases}
				%		e^{\frac{1}{\left|x-1\right|^2-1}+1}& 0< x< 2\\
				%		0 & x>2
				%	\end{cases},\]
			\[\psi(x)=\begin{cases}
				\frac{1}{2}\left(1+ \cos(\frac{\pi x}{5})\right)& 0< x< 5\\
				0 & x>5
			\end{cases},\]
			and
			\[f_0(i)=\psi(ih),\quad i\in\mathbb{N}.\]
		\end{minipage}
		\hfill
		\begin{minipage}{0.49\textwidth}
			\centering
			\includegraphics[width=\textwidth]{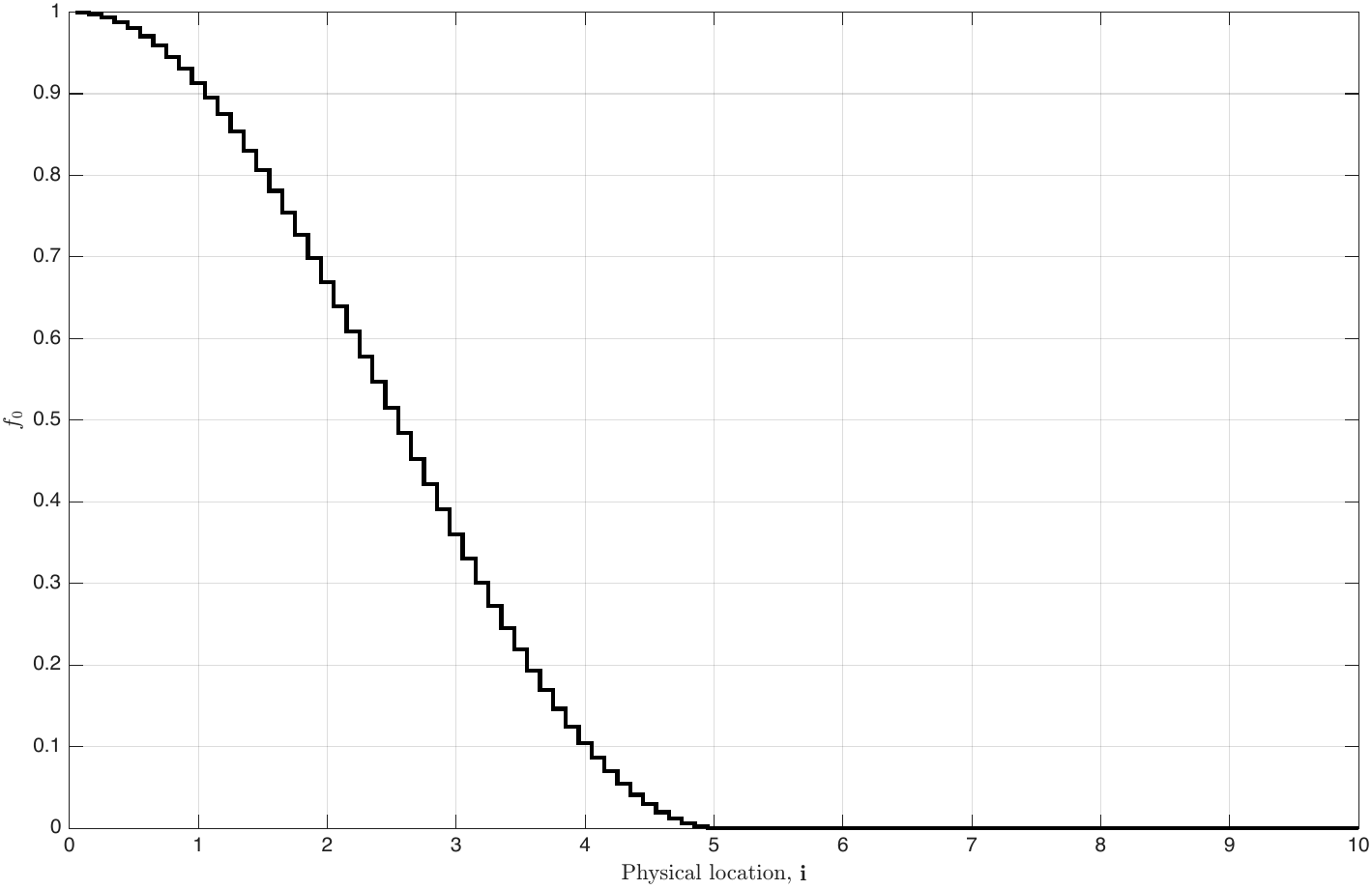}
			\caption{Initial data \(f_0\).}
		\end{minipage}
	\end{figure}
	
	Figure~\ref{Fisol1} displays the numerical solution at times $T=1$ and $T=10$.
	The results confirm the strict positivity of the solution established in
	Theorem~\ref{thm:creation_of_positivity} , as well as the propagation of
	polynomial moments proved in Corollary~\ref{Corr:Poly} and the propagation of
	Mittag--Leffler moments established in
	Theorem~\ref{thm:propagation_of_m_l_tails}.
	
	To illustrate these properties more clearly, we additionally plot the time
	evolution of several moments. Figure~\ref{Fim1m21} shows the first- and second-order
	moments, $m_1\langle f\rangle$ and $m_2\langle f\rangle$, while
	Figure~\ref{Fim3m41} presents the third- and fourth-order moments,
	$m_3\langle f\rangle$ and $m_4\langle f\rangle$, over the time interval
	$[0,1]$.
	
	Finally, Figure~\ref{Fiml1} depicts the evolution of the Mittag--Leffler moments
	corresponding to the parameters $a=1$ with $\lambda=0.1$ and $\lambda=1$,
	again over the time interval $[0,1]$. 
	\begin{figure}[H]
		\centering
		\begin{minipage}{0.49\textwidth}
			\centering
			\includegraphics[width=\textwidth]{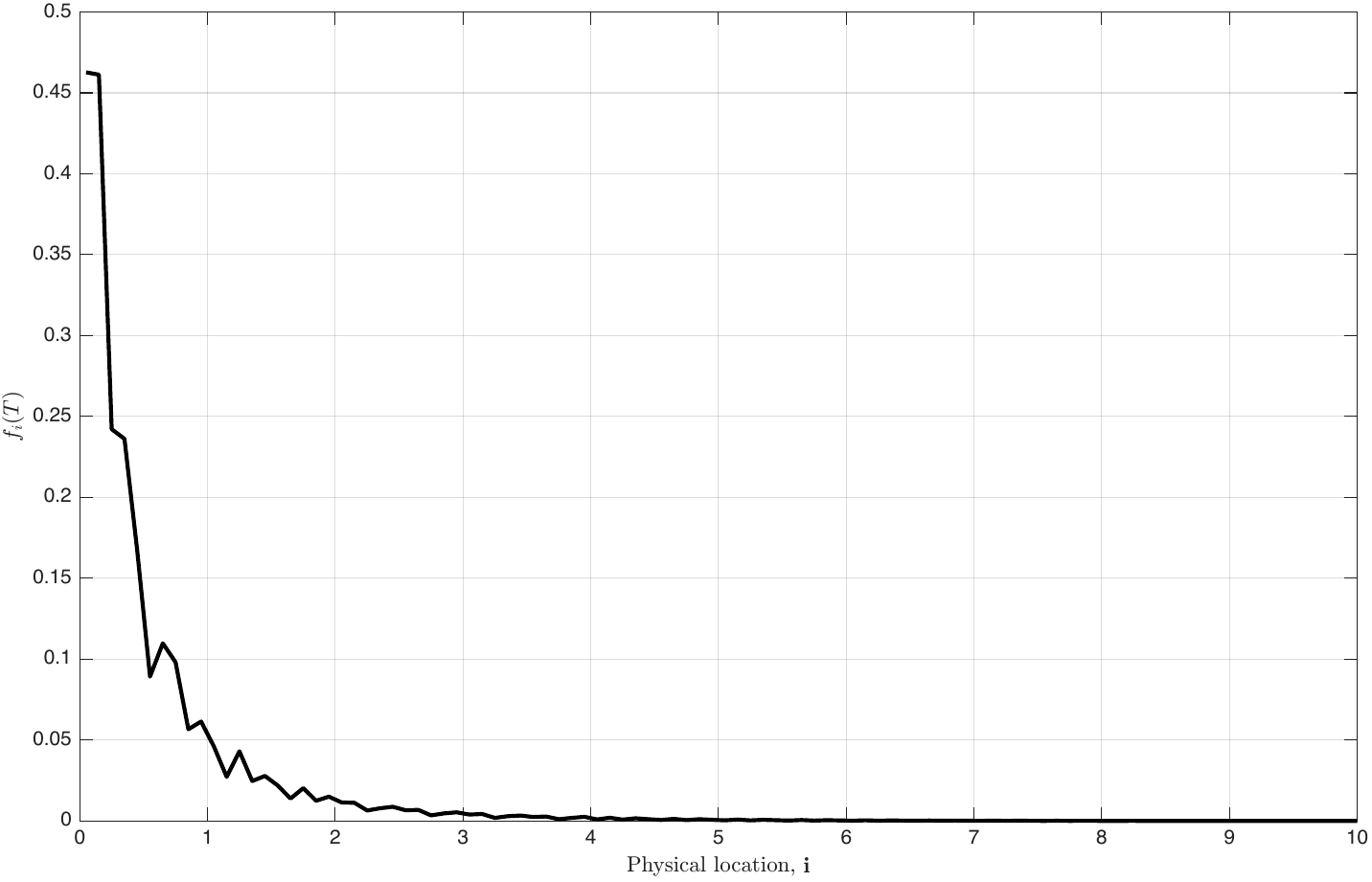}
		\end{minipage}
		\hfill
		\begin{minipage}{0.49\textwidth}
			\centering
			\includegraphics[width=\textwidth]{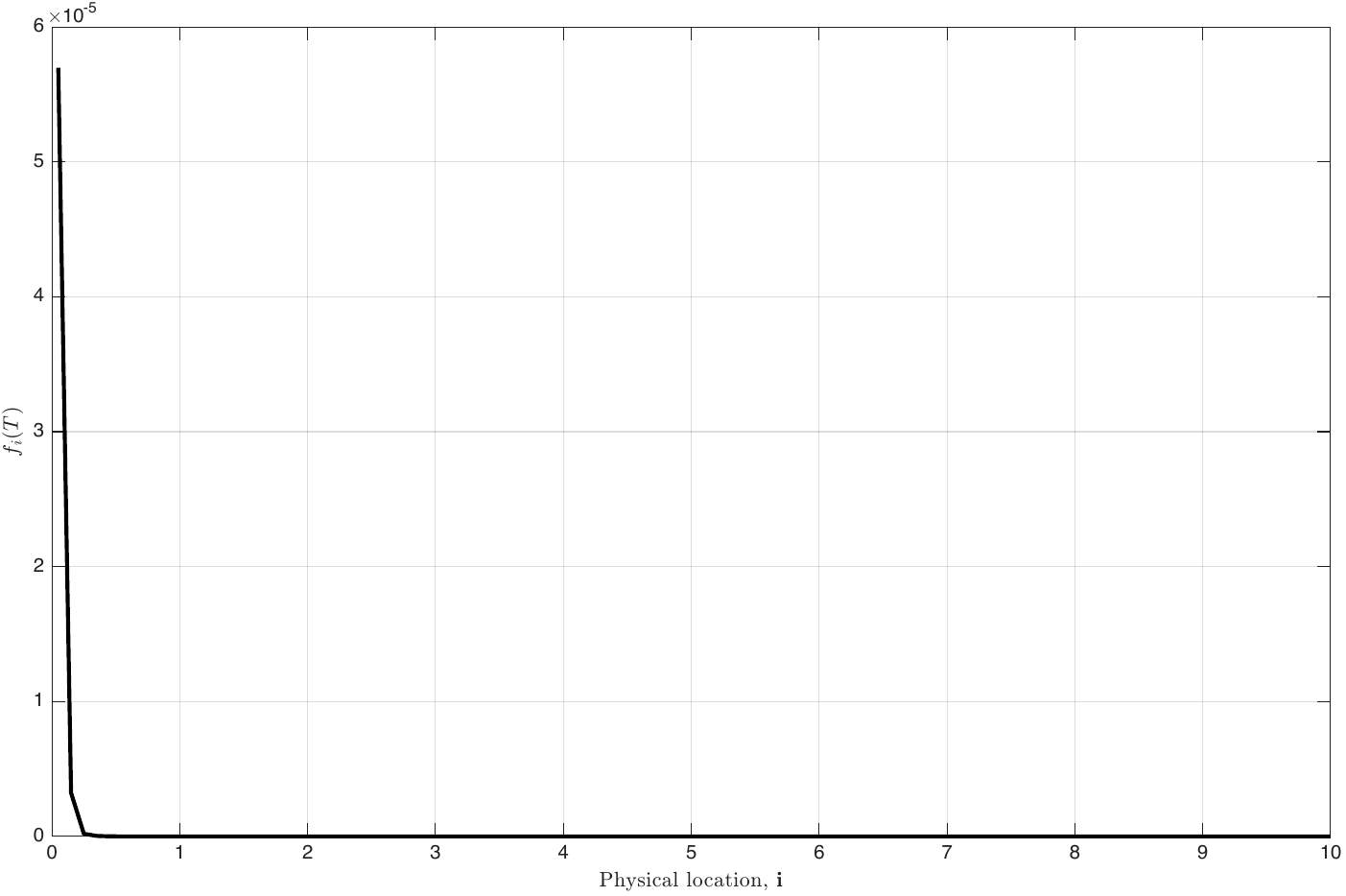}
		\end{minipage}
		\caption{Solution at \(T=1\) (left) and at \(T=10\) (right).}	\label{Fisol1}
	\end{figure}
	\begin{figure}[H]
		\centering
		\begin{minipage}{0.49\textwidth}
			\centering
			\includegraphics[width=\textwidth]{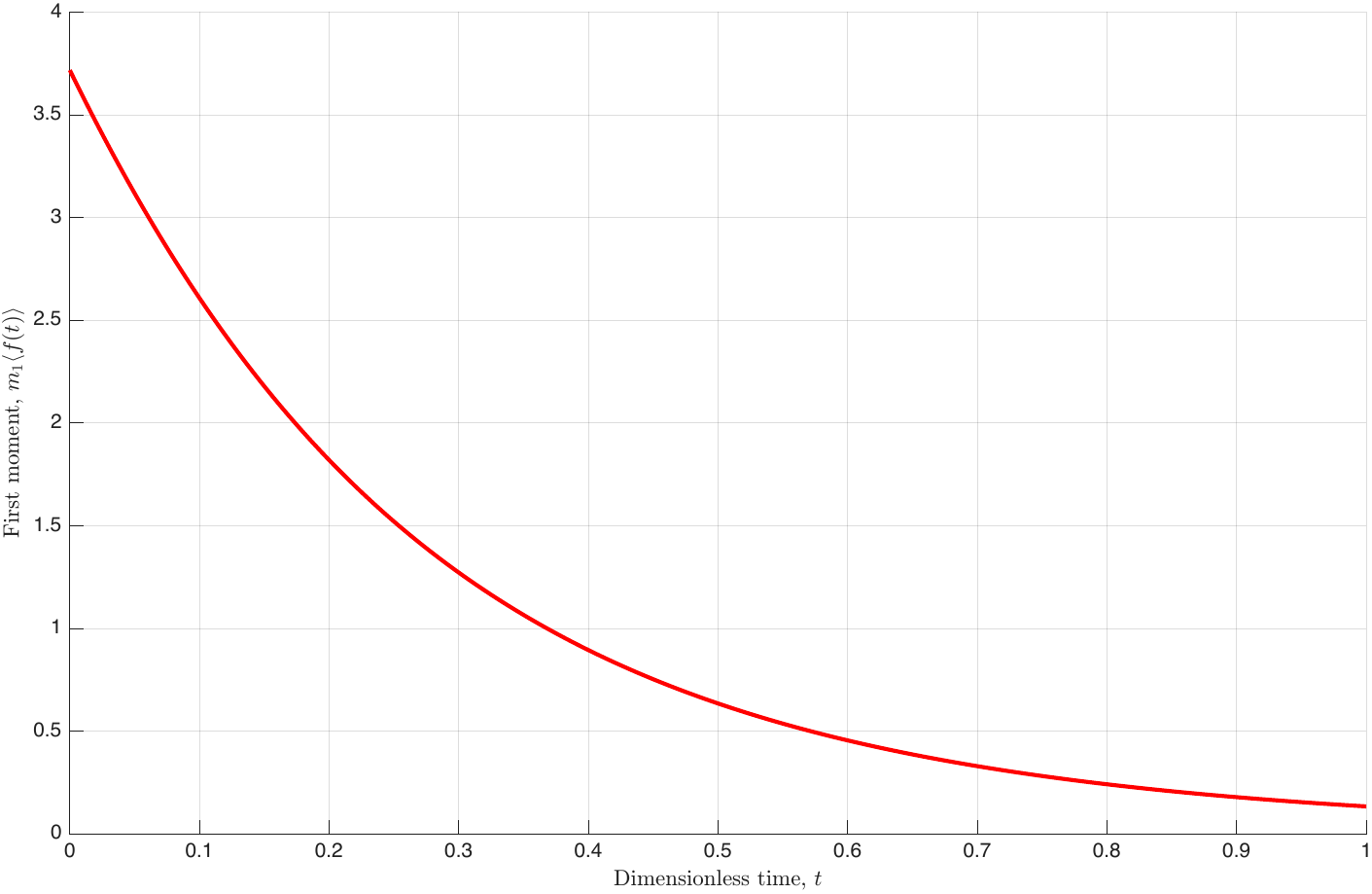}
		\end{minipage}
		\hfill
		\begin{minipage}{0.49\textwidth}
			\centering
			\includegraphics[width=\textwidth]{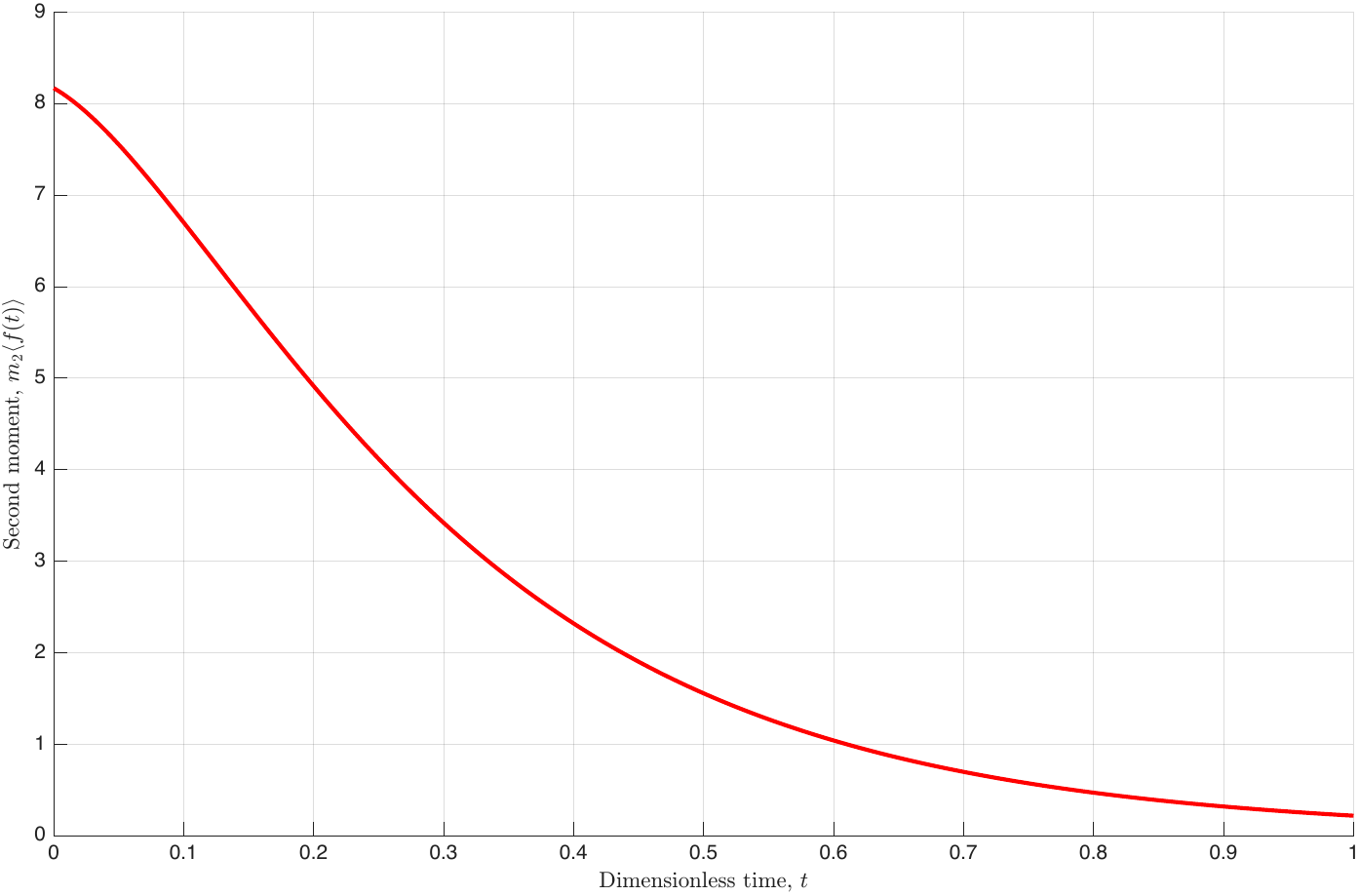}
		\end{minipage}
		\caption{\(m_1\left\langle f(t)\right\rangle\) (left) and \(m_2\left\langle f(t)\right\rangle\) (right) on \([0,1]\).}
		\label{Fim1m21}
	\end{figure}
	
	\begin{figure}[H]
		\centering
		\begin{minipage}{0.49\textwidth}
			\centering
			\includegraphics[width=\textwidth]{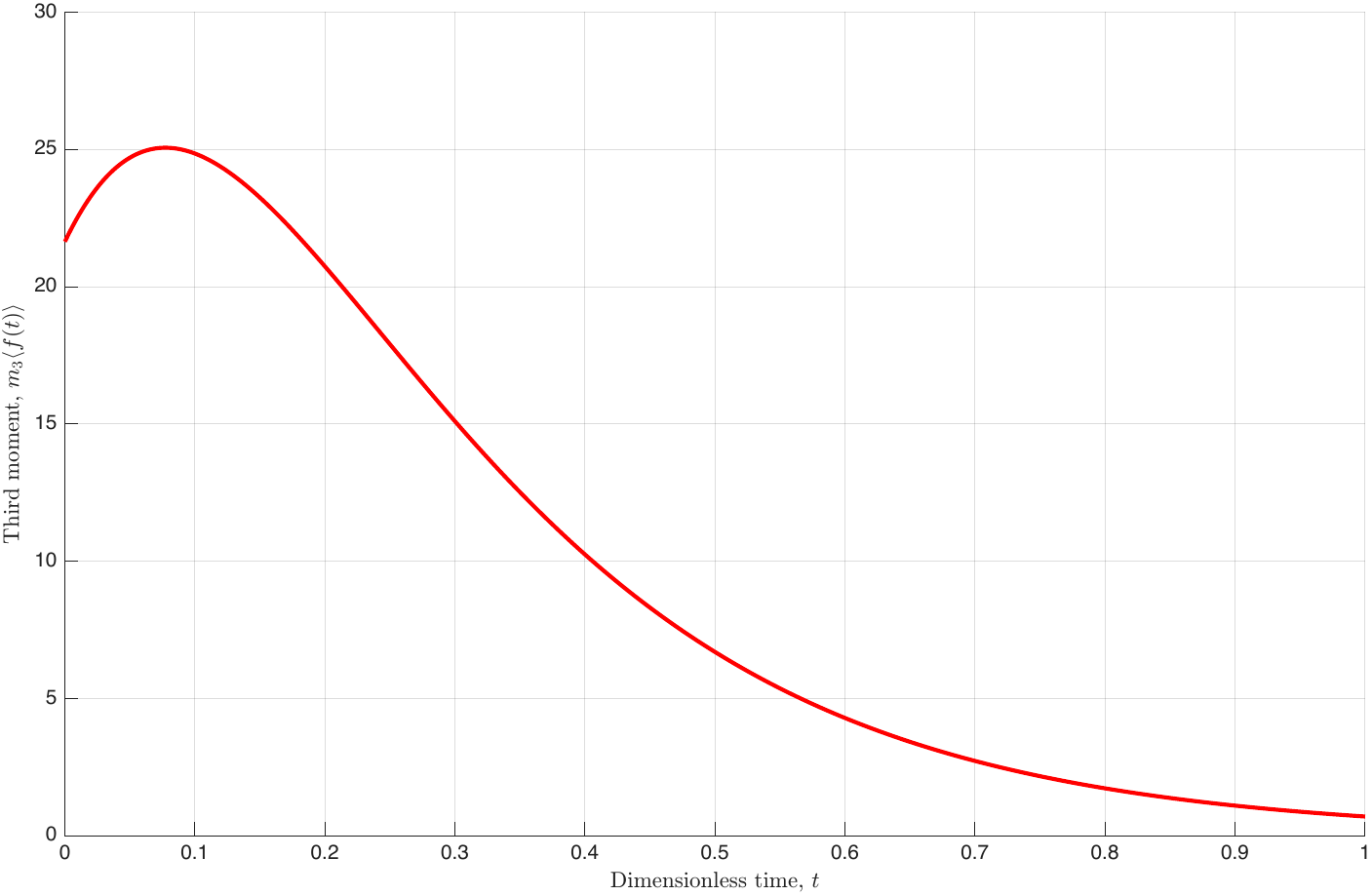}
		\end{minipage}
		\hfill
		\begin{minipage}{0.49\textwidth}
			\centering
			\includegraphics[width=\textwidth]{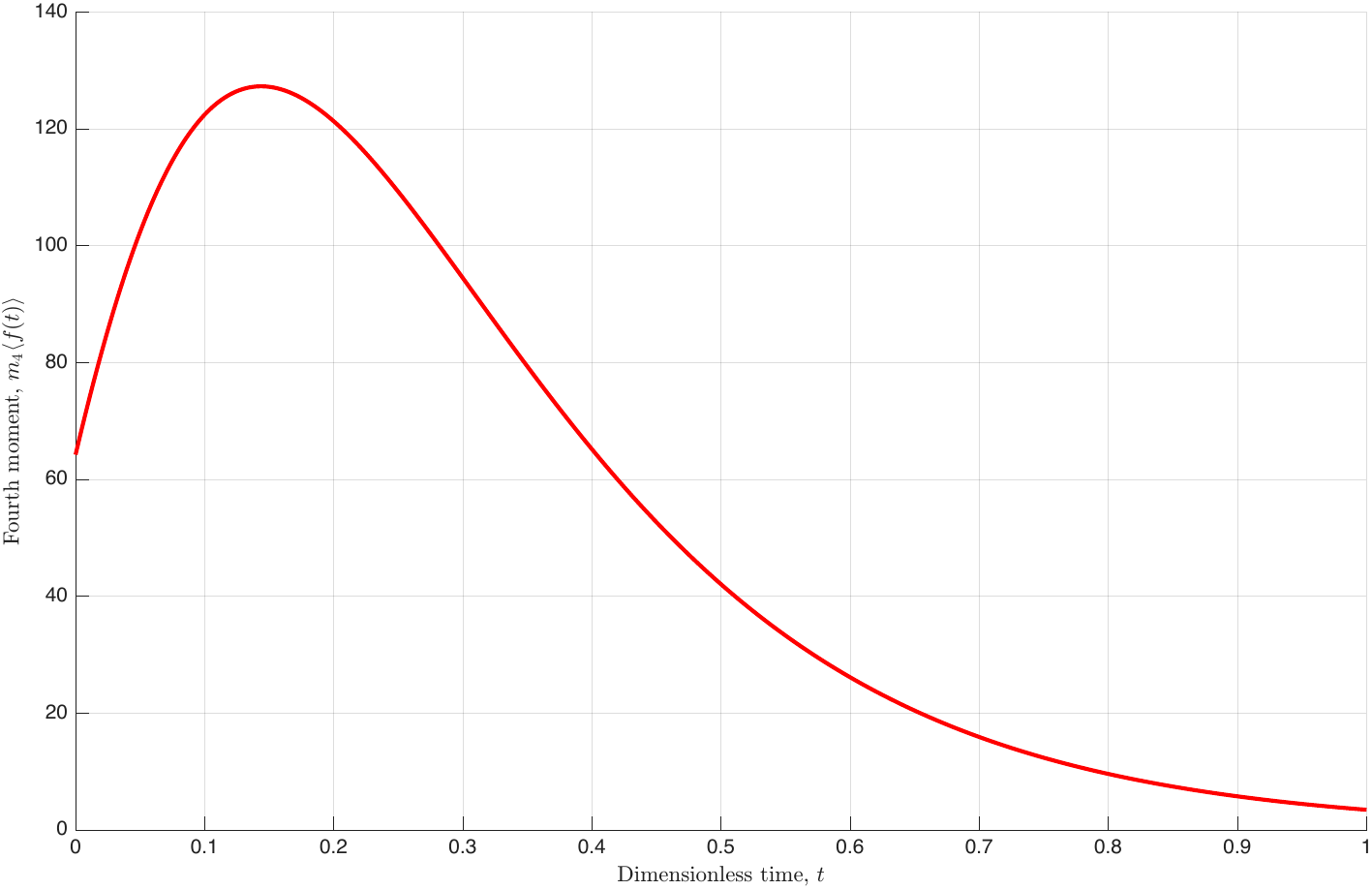}
		\end{minipage}
		\caption{\(m_3\left\langle f(t)\right\rangle\) (left) and \(m_4\left\langle f(t)\right\rangle\) (right) on \([0,1]\).}
		\label{Fim3m41}
	\end{figure}
	
	\begin{figure}[H]%[htbp]
		\centering
		\begin{minipage}{0.49\textwidth}
			\centering
			\includegraphics[width=\textwidth]{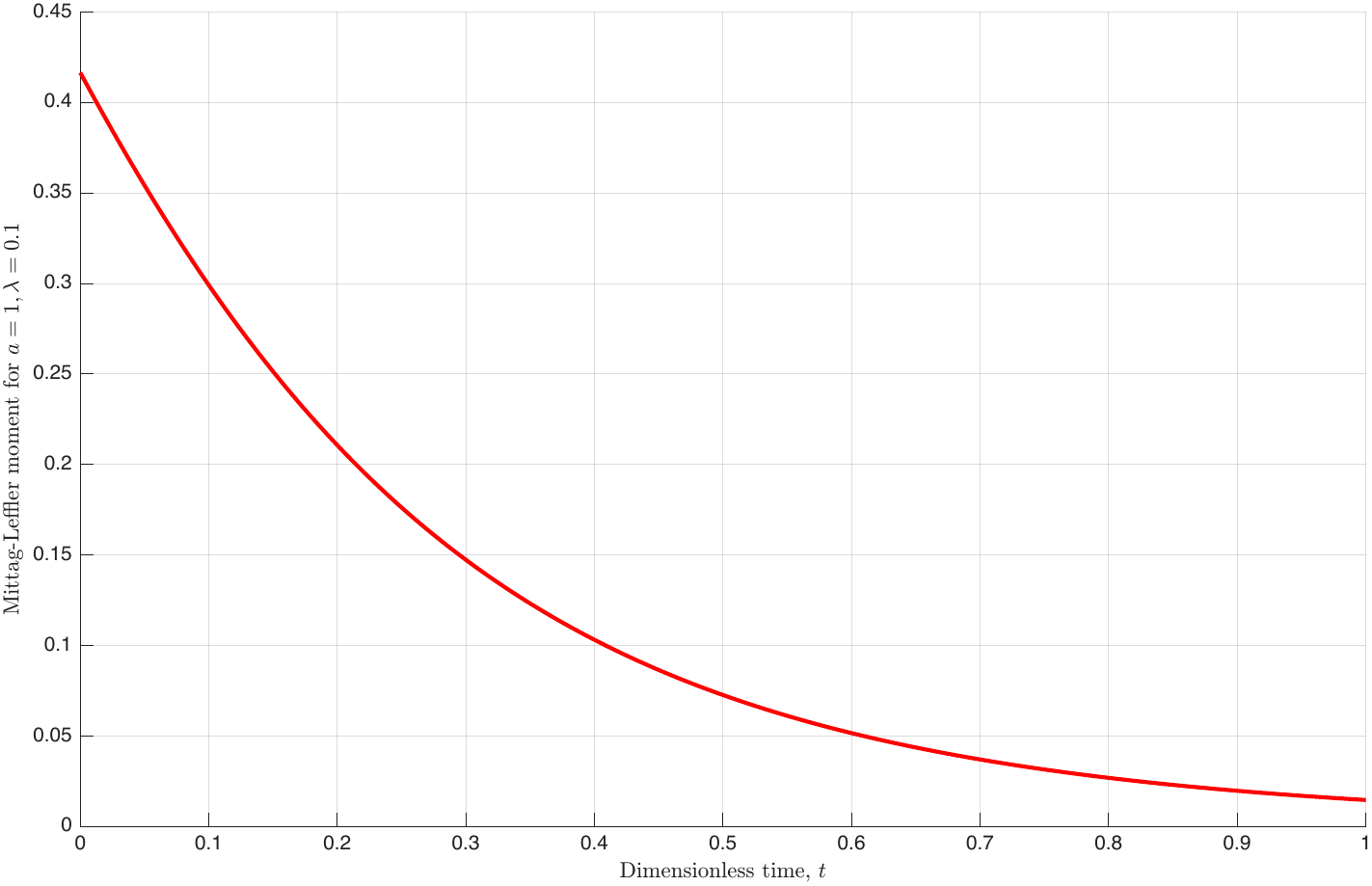}
		\end{minipage}
		\hfill
		\begin{minipage}{0.49\textwidth}
			\centering
			\includegraphics[width=\textwidth]{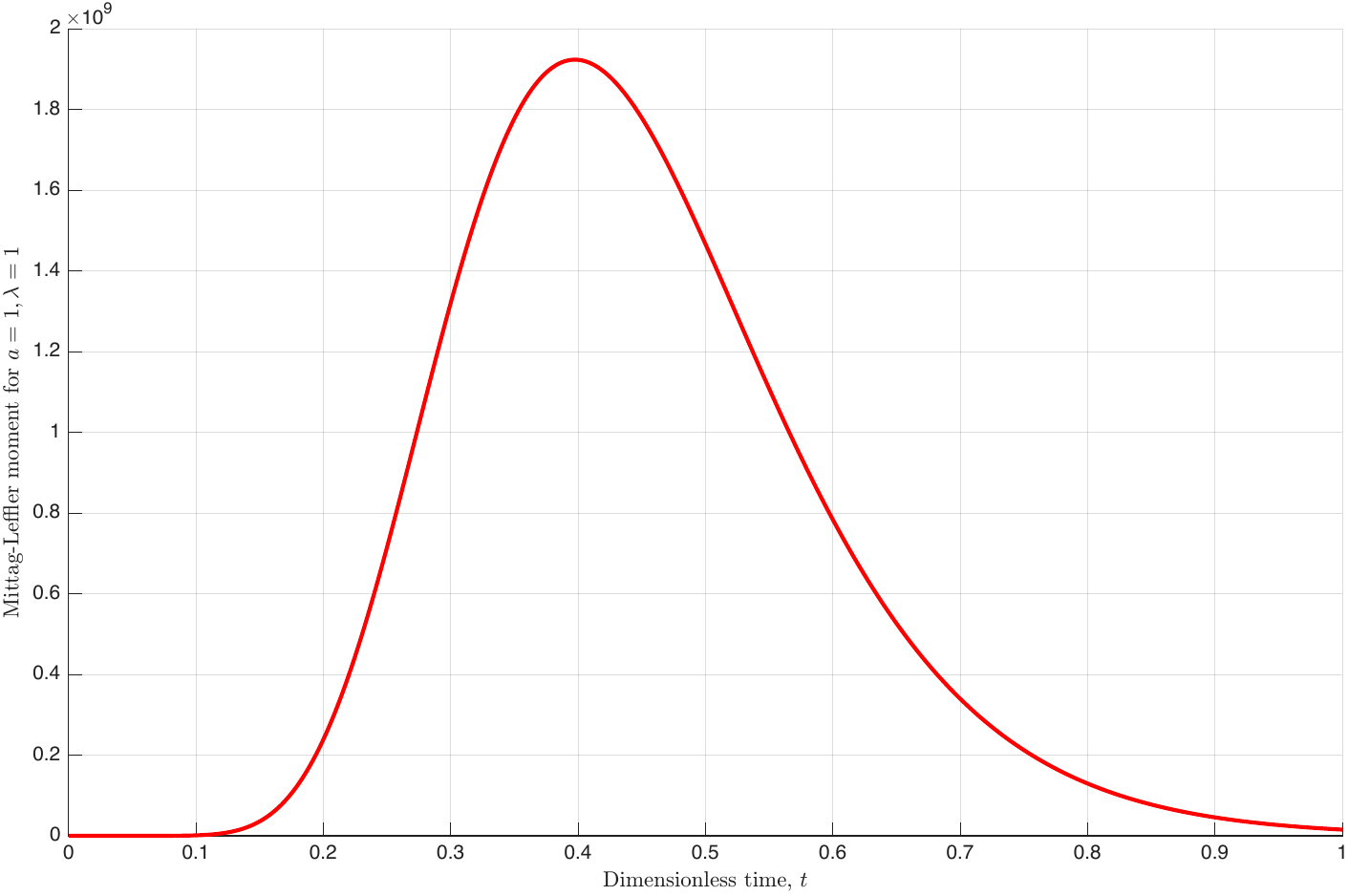}
		\end{minipage}
		\caption{Mittag-Leffler moments \(\mathcal{E}_a^\infty(\lambda)\left\langle f(t)\right\rangle\) for \(a=1,\lambda=0.1\) (left) and for \(a=1,\lambda=1\) (right) on \([0,1]\).}
		\label{Fiml1}
	\end{figure}

	\subsection{Test 2}	 
	
	In this experiment, we consider a different initial datum given by the
	indicator function
	\begin{figure}[H]
		\centering
		\begin{minipage}{0.49\textwidth}
			\[
			\chi_{[3,5]}(x)=
			\begin{cases}
				1, & 3 \le x \le 5,\\
				0, & \text{otherwise},
			\end{cases}
			\]
			and define the discrete initial data by
			\[
			f_0(i)=\chi_{[3,5]}(ih).
			\]
		\end{minipage}
		\hfill
		\begin{minipage}{0.49\textwidth}
			\centering
			\includegraphics[width=\textwidth]{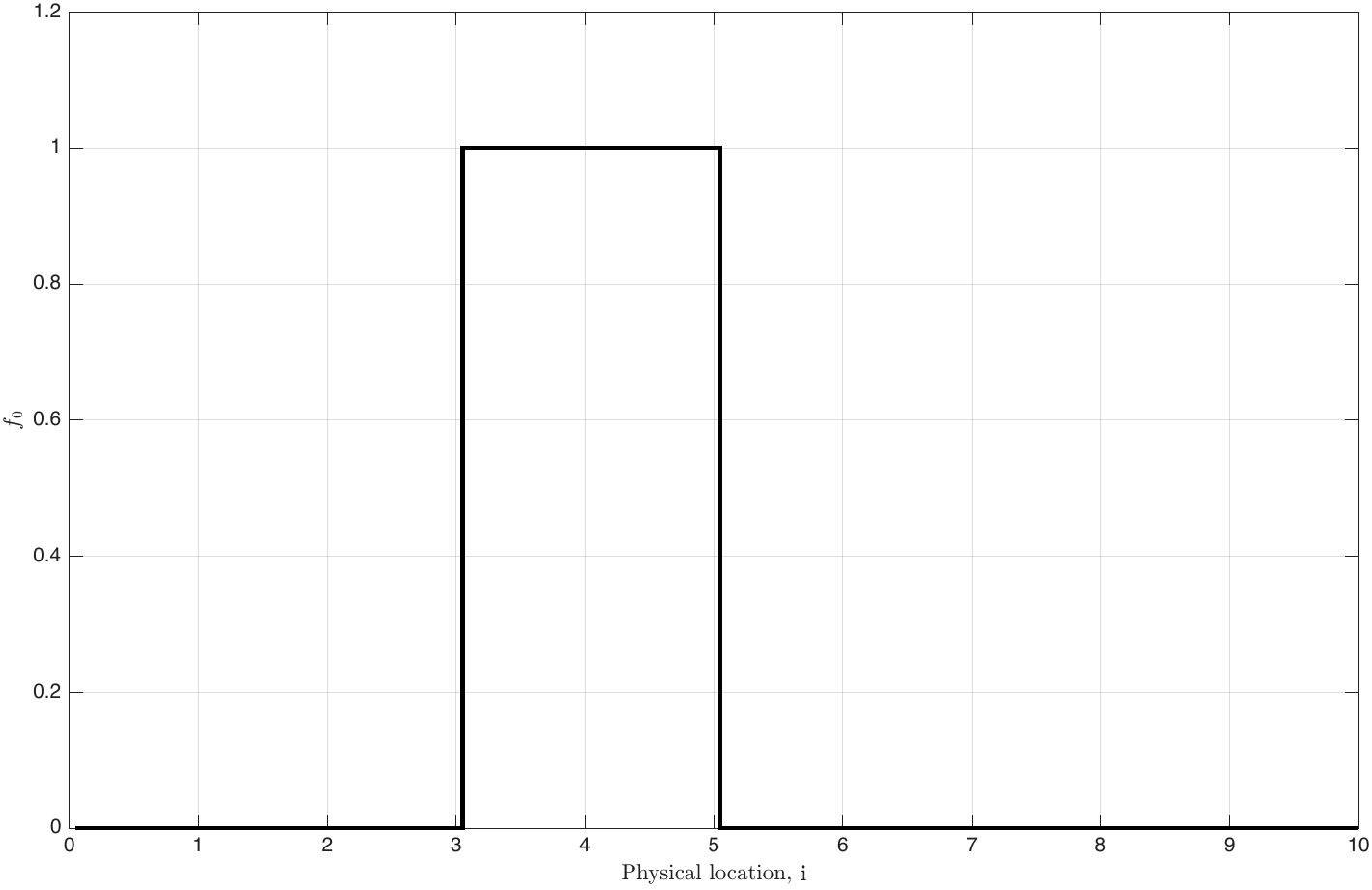}
			\caption{Initial data \(f_0\).}
		\end{minipage}
	\end{figure}
	Figure~\ref{Fisol2} shows the numerical solution at times $T=1$ and $T=10$. The results
	clearly demonstrate the persistence of positivity predicted by
	Theorem~\ref{thm:creation_of_positivity}. In addition, they confirm the
	propagation of polynomial moments established in
	Corollary~\ref{Corr:Poly}, as well as the propagation of Mittag--Leffler moments
	proved in Theorem~\ref{thm:propagation_of_m_l_tails}. 
	To further highlight these theoretical properties, we examine the temporal
	evolution of several moments. Figure~\ref{Fim1m22} displays the first- and second-order
	moments, $m_1\langle f\rangle$ and $m_2\langle f\rangle$, while Figure~\ref{Fim3m42} shows
	the third- and fourth-order moments, $m_3\langle f\rangle$ and
	$m_4\langle f\rangle$, over the time interval $[0,1]$.
	
	Finally, Figure~\ref{Fiml2} presents the evolution of the Mittag--Leffler moments with
	parameters $a=1$ and $\lambda=0.1$ and $\lambda=1$, again plotted over the time
	interval $[0,1]$.

	\begin{figure}[H]		\centering
		\begin{minipage}{0.49\textwidth}
			\centering
			\includegraphics[width=\textwidth]{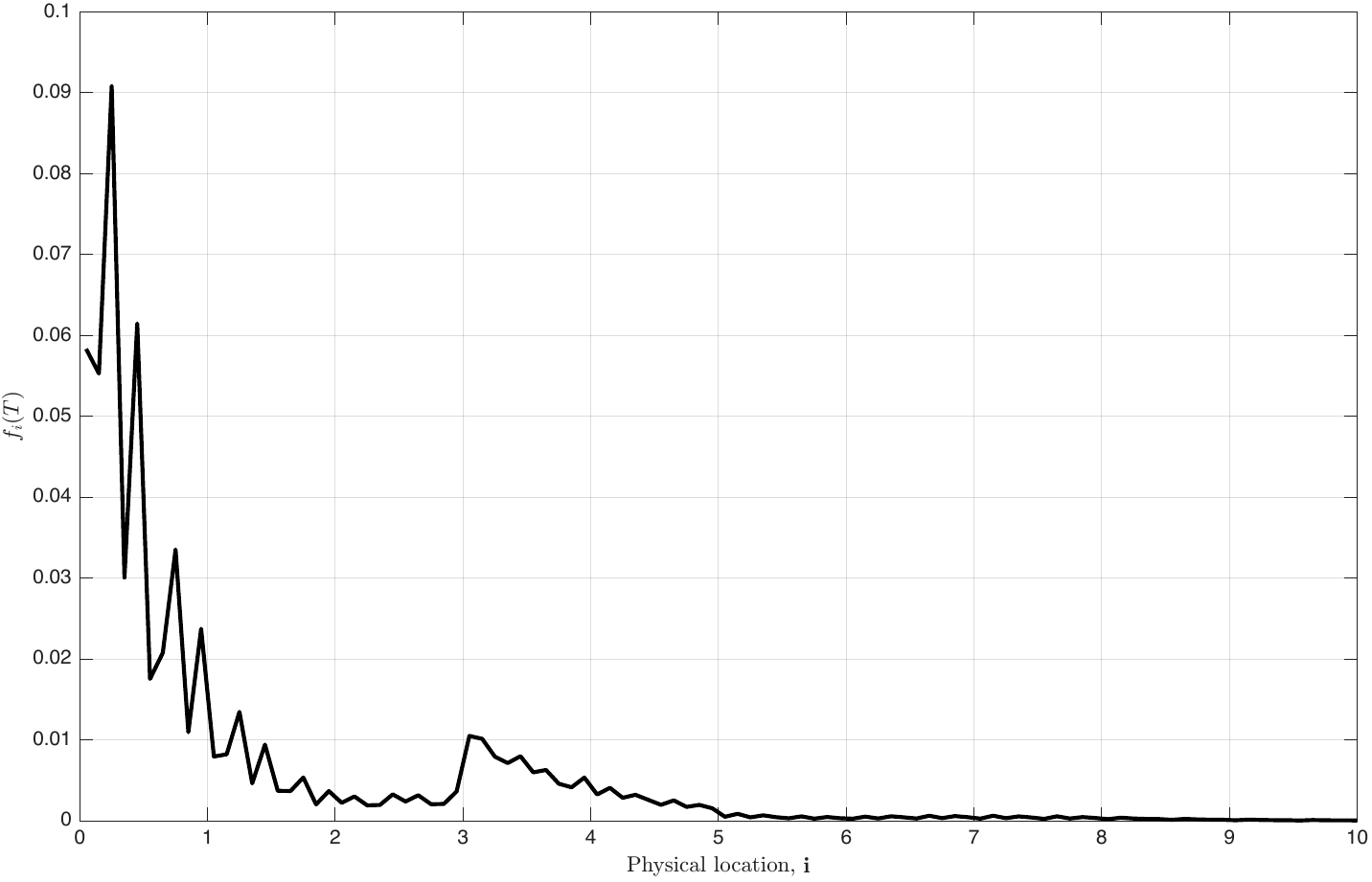}
		\end{minipage}
		\hfill
		\begin{minipage}{0.49\textwidth}
			\centering
			\includegraphics[width=\textwidth]{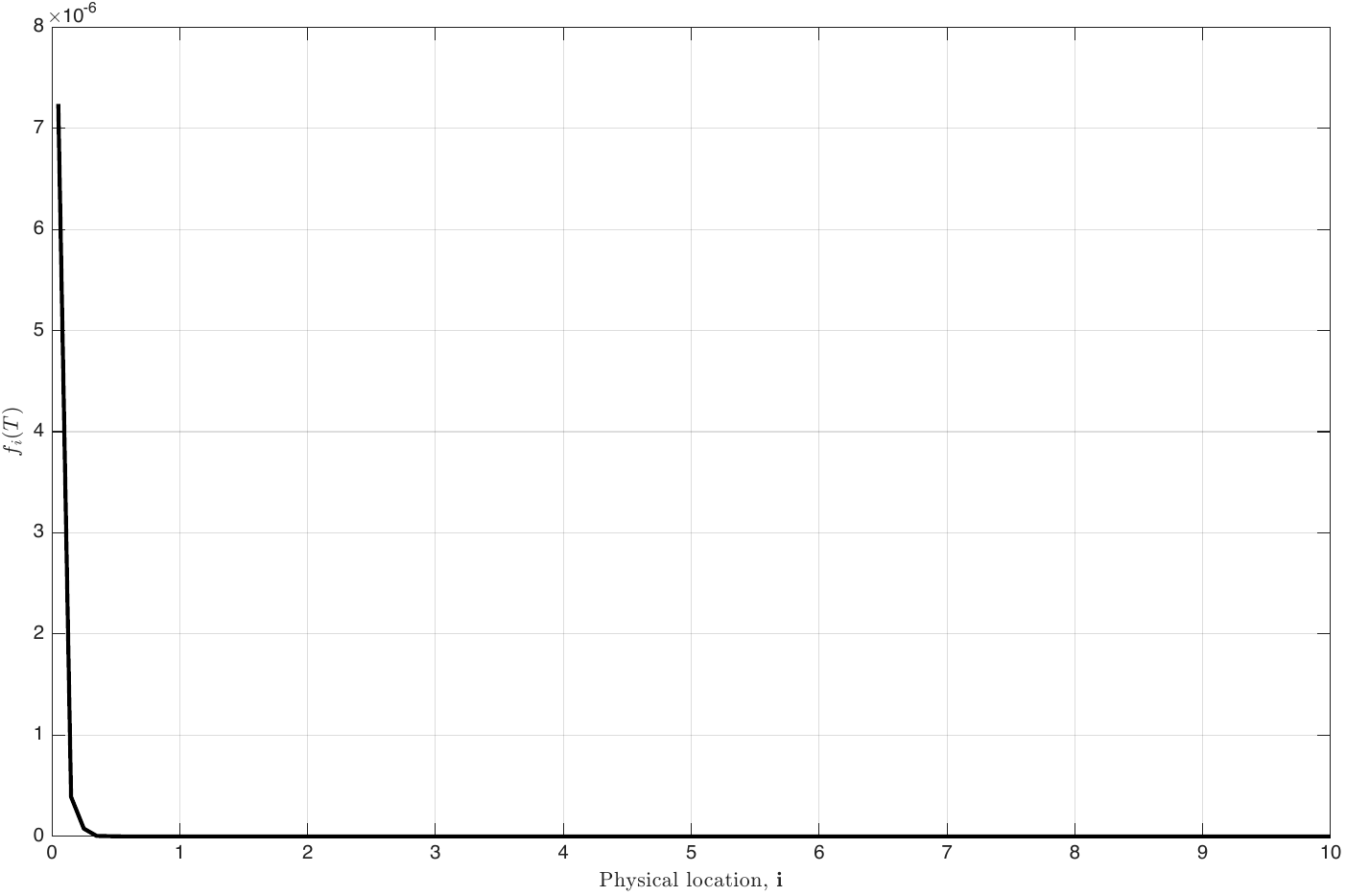}
		\end{minipage}
		\caption{Solution at \(T=1\) (left) and at \(T=10\) (right).}
		\label{Fisol2}
	\end{figure}
	\begin{figure}[H]\label{F2}
		\centering
		\begin{minipage}{0.49\textwidth}
			\centering
			\includegraphics[width=\textwidth]{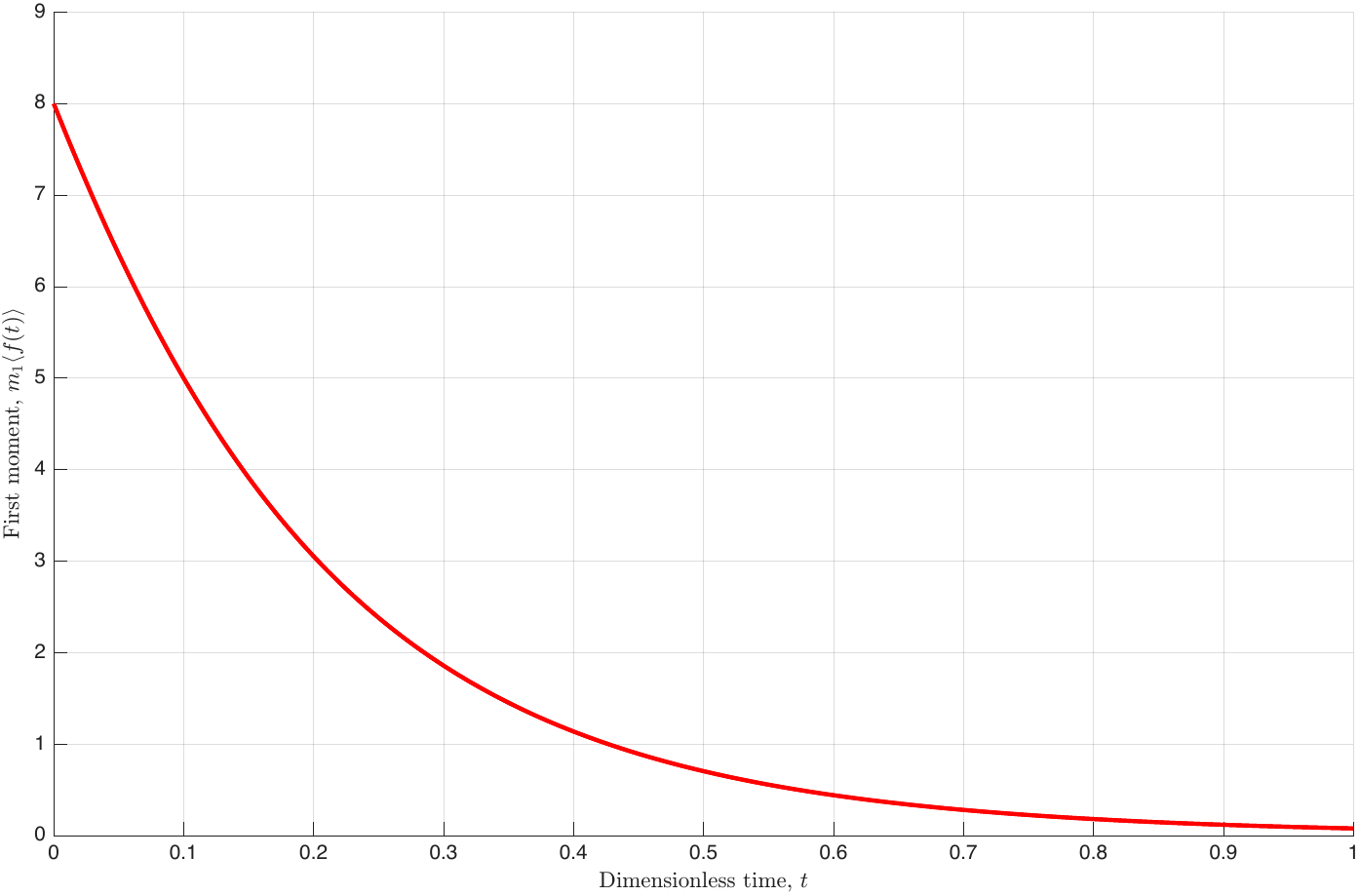}
		\end{minipage}
		\hfill
		\begin{minipage}{0.49\textwidth}
			\centering
			\includegraphics[width=\textwidth]{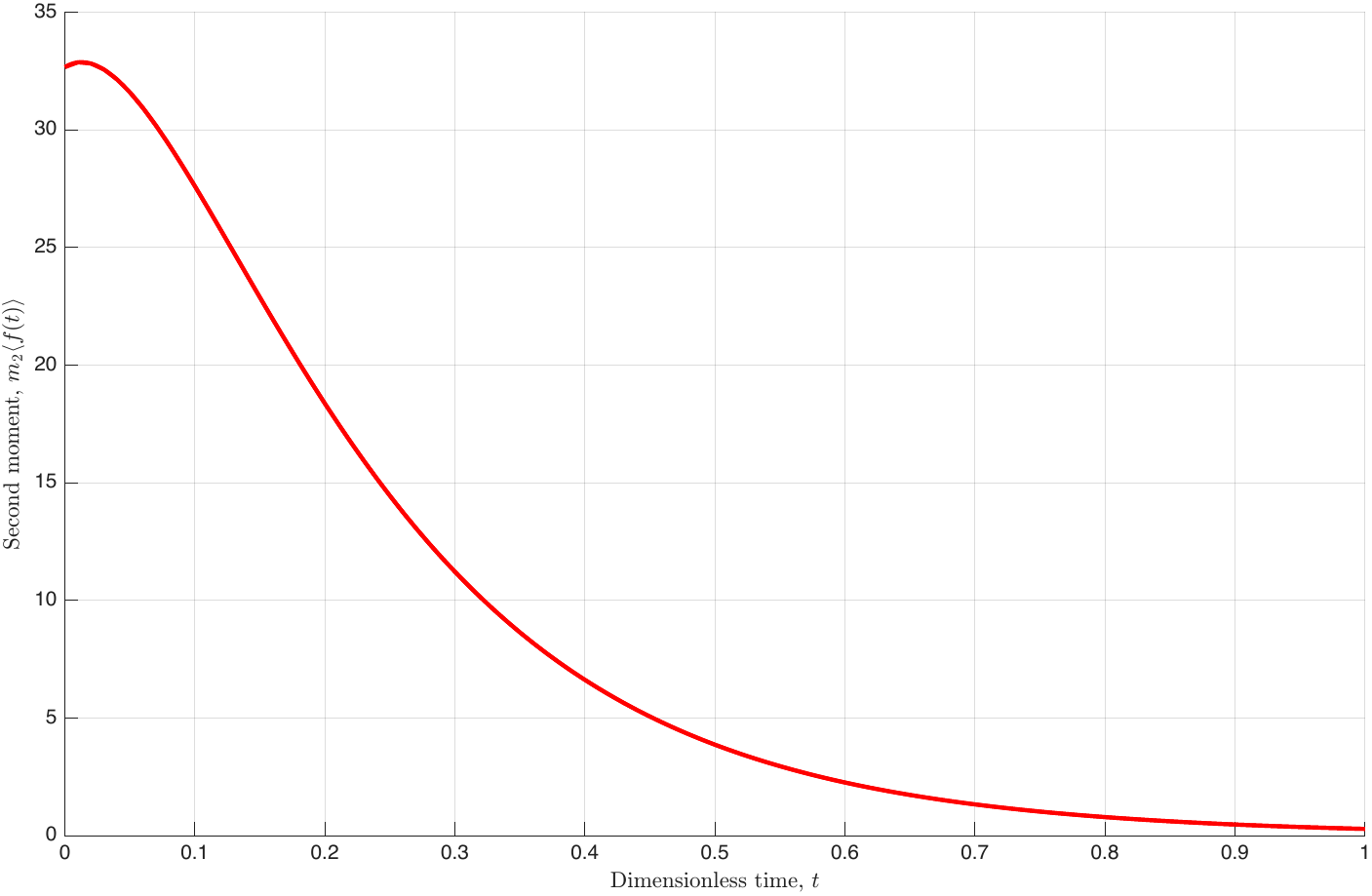}
		\end{minipage}
		\caption{\(m_1\left\langle f(t)\right\rangle\) (left) and \(m_2\left\langle f(t)\right\rangle\) (right) on \([0,1]\).}
		\label{Fim1m22}
	\end{figure}
	\begin{figure}[H]\label{F3}
		\centering
		\begin{minipage}{0.49\textwidth}
			\centering
			\includegraphics[width=\textwidth]{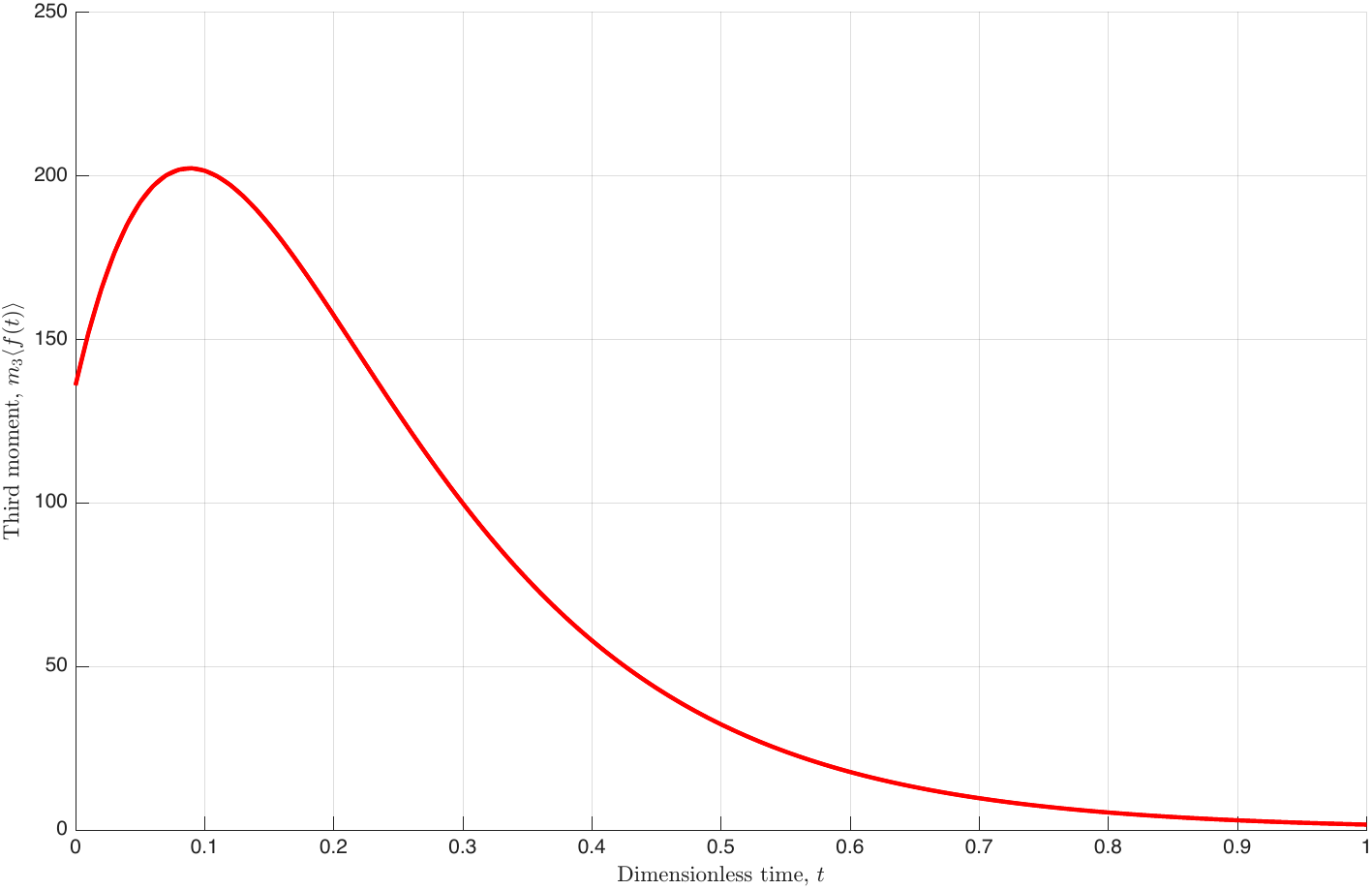}
		\end{minipage}
		\hfill
		\begin{minipage}{0.49\textwidth}
			\centering
			\includegraphics[width=\textwidth]{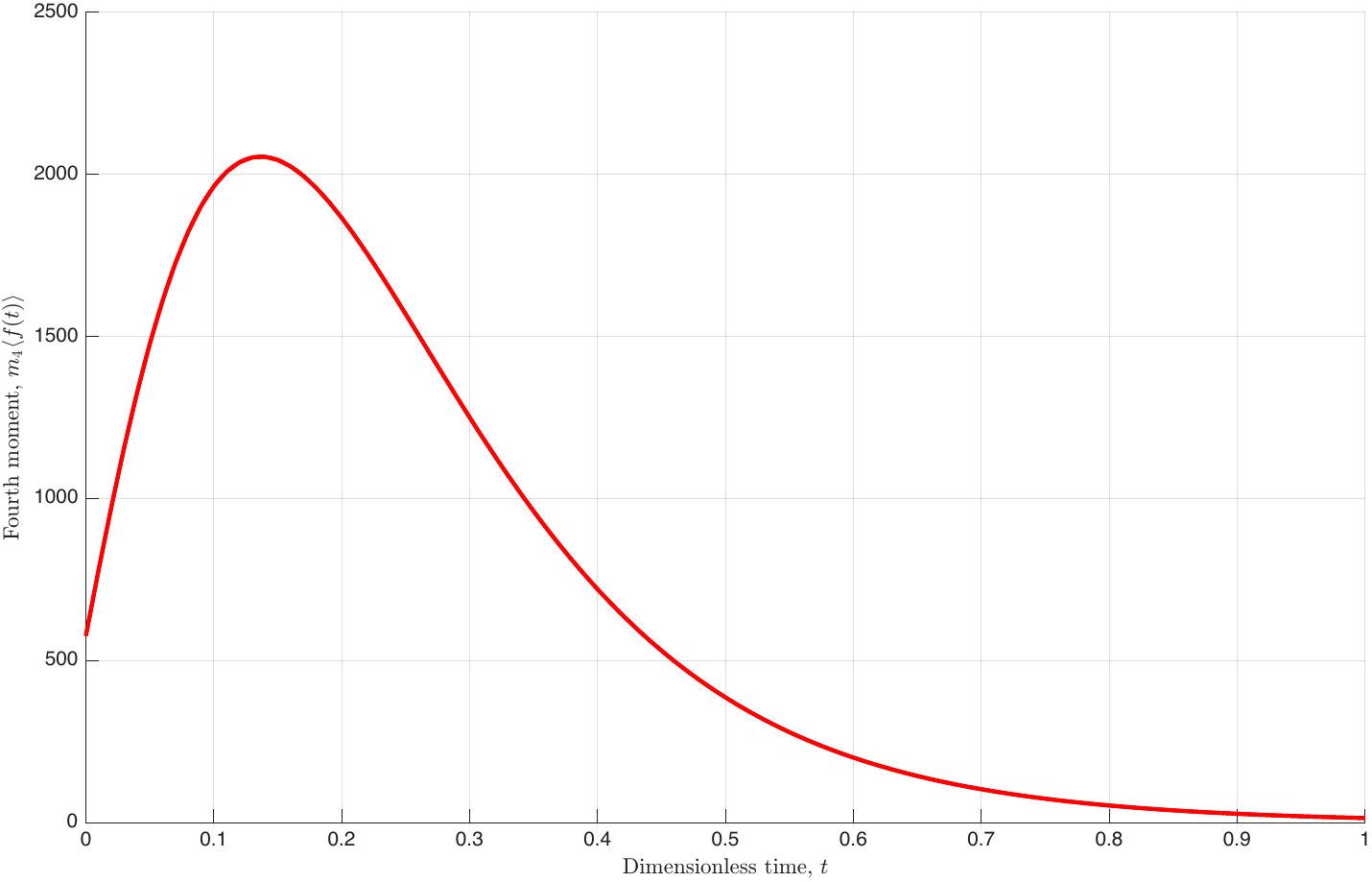}
		\end{minipage}
		\caption{\(m_3\left\langle f(t)\right\rangle\) (left) and \(m_4\left\langle f(t)\right\rangle\) (right) on \([0,1]\).}
		\label{Fim3m42}
	\end{figure}
	\begin{figure}[H]\label{F4}
		\centering
		\begin{minipage}{0.49\textwidth}
			\centering
			\includegraphics[width=\textwidth]{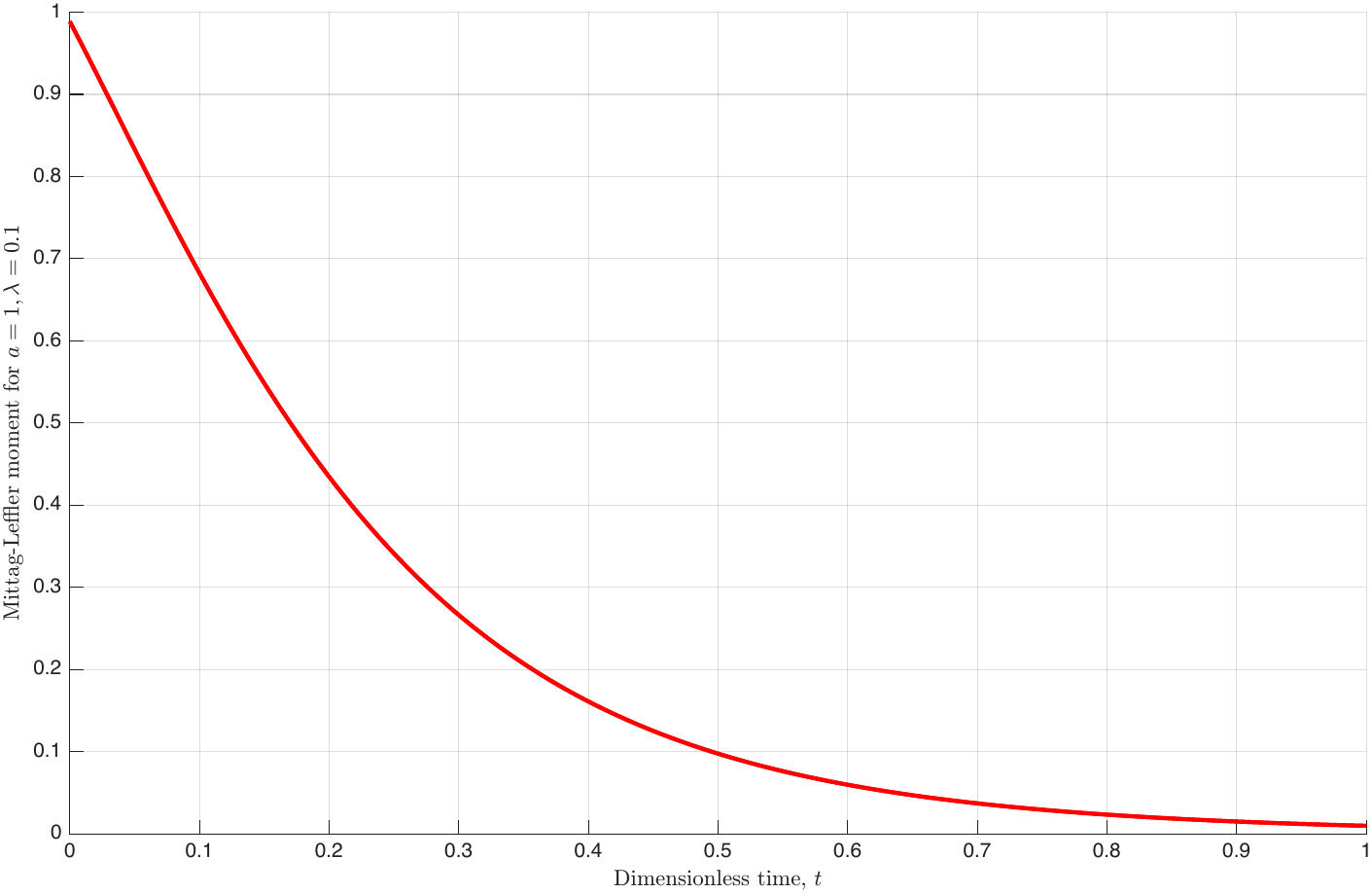}
		\end{minipage}
		\hfill
		\begin{minipage}{0.49\textwidth}
			\centering
			\includegraphics[width=\textwidth]{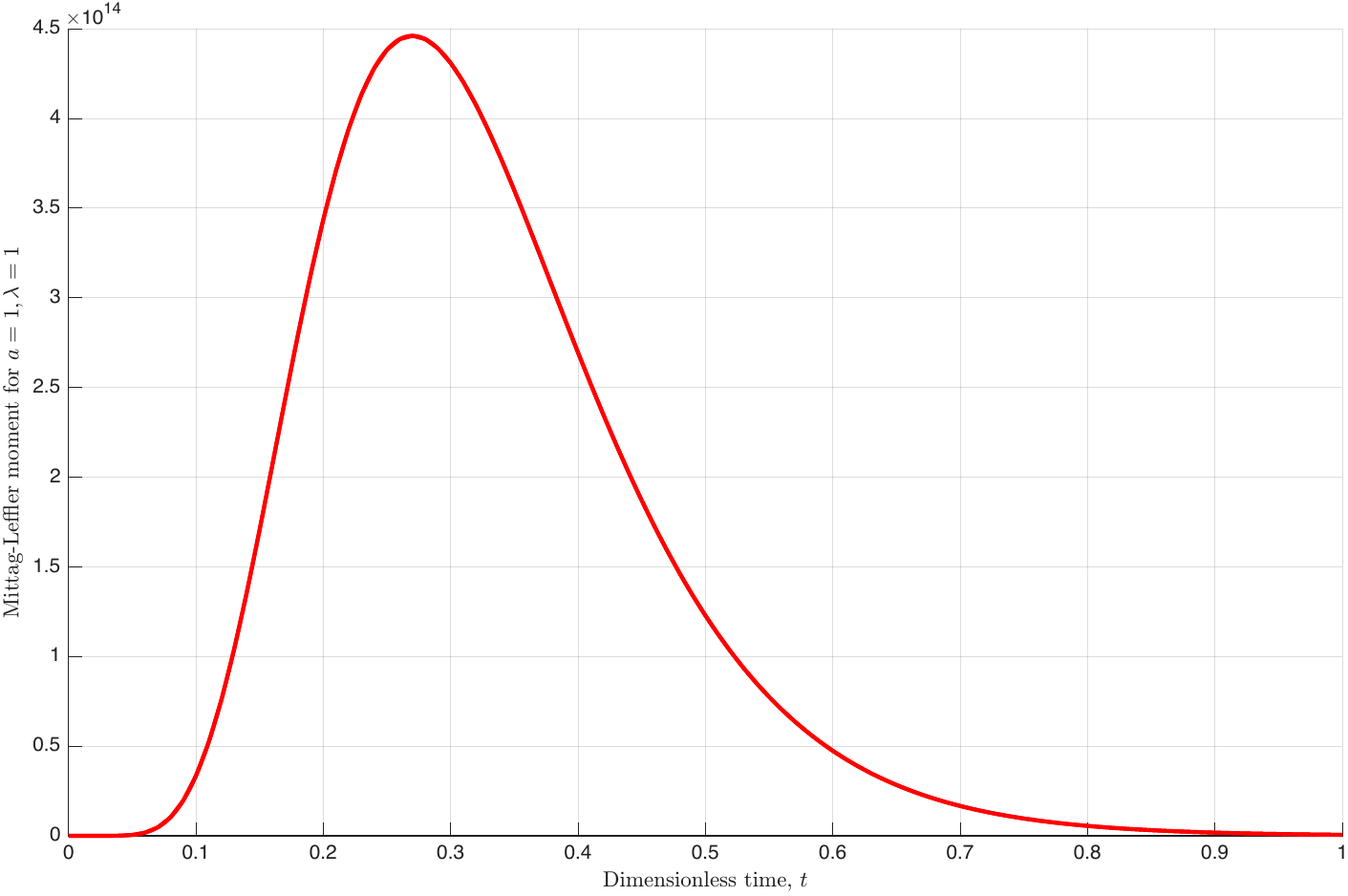}
		\end{minipage}
		\caption{Mittag-Leffler moments \(\mathcal{E}_a^\infty(\lambda)\left\langle f(t)\right\rangle\) for \(a=1,\lambda=0.1\) (left) and for \(a=1,\lambda=1\) (right) on \([0,1]\).}
		\label{Fiml2}
	\end{figure}

	\bibliographystyle{amsplain}
	\bibliography{ref}{}

\def\cprime{$'$} \def\cprime{$'$} \def\cprime{$'$} \def\cprime{$'$}
  \def\cprime{$'$} \def\cprime{$'$} \def\cprime{$'$}
\providecommand{\bysame}{\leavevmode\hbox to3em{\hrulefill}\thinspace}
\providecommand{\MR}{\relax\ifhmode\unskip\space\fi MR }
% \MRhref is called by the amsart/book/proc definition of \MR.
\providecommand{\MRhref}[2]{%
  \href{http://www.ams.org/mathscinet-getitem?mr=#1}{#2}
}
\providecommand{\href}[2]{#2}
\begin{thebibliography}{10}

\bibitem{AlonsoGambaBinh}
R.~Alonso, I.~M. Gamba, and M.-B. Tran, \emph{The {C}auchy problem and {BEC}
  stability for the quantum {B}oltzmann-{G}ross-{P}itaevskii system for bosons
  at very low temperature}, arXiv preprint arXiv:1609.07467 (2016).

\bibitem{banks2025new}
J.~W. Banks and J.~Shatah, \emph{A new approach to direct discretization of
  wave kinetic equations with application to a nonlinear schrodinger system in
  2d}, arXiv preprint arXiv:2509.03432 (2025).

\bibitem{benney1969random}
D.~J. Benney and A.~C. Newell, \emph{Random wave closures}, Studies in Applied
  Mathematics \textbf{48} (1969), no.~1, 29--53.

\bibitem{benney1966nonlinear}
D.~J. Benney and P.~G. Saffman, \emph{Nonlinear interactions of random waves in
  a dispersive medium}, Proc. R. Soc. Lond. A \textbf{289} (1966), no.~1418,
  301--320.

\bibitem{connaughton2010dynamical}
C.~Connaughton and A.~C. Newell, \emph{Dynamical scaling and the
  finite-capacity anomaly in three-wave turbulence}, Physical Review E
  \textbf{81} (2010), no.~3, 036303.

\bibitem{cortes2020system}
E.~Cort{\'e}s and M.~Escobedo, \emph{On a system of equations for the normal
  fluid-condensate interaction in a bose gas}, Journal of Functional Analysis
  \textbf{278} (2020), no.~2, 108315.

\bibitem{CraciunBinh}
G.~Craciun and M.-B. Tran, \emph{A reaction network approach to the convergence
  to equilibrium of quantum {B}oltzmann equations for {B}ose gases}, ESAIM:
  Control, Optimisation and Calculus of Variations (2021).

\bibitem{das2024numerical}
A.~Das and M.-B. Tran, \emph{Numerical schemes for a fully nonlinear
  coagulation--fragmentation model coming from wave kinetic theory},
  Proceedings of the Royal Society A \textbf{481} (2025), no.~2316, 20250197.

\bibitem{deng2019derivation}
Y.~Deng and Z.~Hani, \emph{On the derivation of the wave kinetic equation for
  nls}, arXiv preprint arXiv:1912.09518 (2019).

\bibitem{deng2021propagation}
\bysame, \emph{Derivation of the wave kinetic equation: full range of scaling
  laws}, arXiv preprint arXiv:2110.04565 (2021).

\bibitem{deng2023long}
\bysame, \emph{Long time justification of wave turbulence theory}, arXiv
  preprint arXiv:2311.10082 (2023).

\bibitem{deng2023}
\bysame, \emph{Propagation of chaos and the higher order statistics in the wave
  kinetic theory}, arXiv preprint arXiv:2301.07063 (2023).

\bibitem{deng2021full}
Yu~Deng and Zaher Hani, \emph{Full derivation of the wave kinetic equation},
  arXiv preprint arXiv:2104.11204 (2021).

\bibitem{escobedo2023linearized1}
M.~Escobedo, \emph{On the linearized system of equations for the
  condensate--normal fluid interaction at very low temperature}, Studies in
  Applied Mathematics \textbf{150} (2023), no.~2, 448--456.

\bibitem{escobedo2023linearized}
\bysame, \emph{On the linearized system of equations for the condensate-normal
  fluid interaction near the critical temperature}, Archive for Rational
  Mechanics and Analysis \textbf{247} (2023), no.~5, 92.

\bibitem{EPV}
M.~Escobedo, F.~Pezzotti, and M.~Valle, \emph{Analytical approach to relaxation
  dynamics of condensed {B}ose gases}, Ann. Physics \textbf{326} (2011), no.~4,
  808--827. \MR{2771726 (2012c:82046)}

\bibitem{EscobedoBinh}
M.~Escobedo and M.-B. Tran, \emph{Convergence to equilibrium of a linearized
  quantum {B}oltzmann equation for bosons at very low temperature}, Kinetic and
  Related Models \textbf{8} (2015), no.~3, 493--531.

\bibitem{GambaSmithBinh}
I.~M. Gamba, L.~M. Smith, and M.-B. Tran, \emph{On the wave turbulence theory
  for stratified flows in the ocean}, M3AS: Mathematical Models and Methods in
  Applied Sciences. Vol. 30, No. 1 105-137 (2020).

\bibitem{hasselmann1962non}
K.~Hasselmann, \emph{On the non-linear energy transfer in a gravity-wave
  spectrum part 1. general theory}, Journal of Fluid Mechanics \textbf{12}
  (1962), no.~04, 481--500.

\bibitem{hasselmann1974spectral}
\bysame, \emph{On the spectral dissipation of ocean waves due to white
  capping}, Boundary-Layer Meteorology \textbf{6} (1974), no.~1-2, 107--127.

\bibitem{kim2025wave}
Y.~H. Kim, Y.~V. Lvov, L.~M. Smith, and M.-B. Tran, \emph{On a wave kinetic
  equation with resonance broadening in oceanography and atmospheric sciences},
  arXiv preprint arXiv:2510.25031 (2025).

\bibitem{Nazarenko:2011:WT}
S.~Nazarenko, \emph{Wave turbulence}, Lecture Notes in Physics, vol. 825,
  Springer, Heidelberg, 2011. \MR{3014432}

\bibitem{nguyen2017quantum}
T.~T. Nguyen and M.-B. Tran, \emph{On the {K}inetic {E}quation in {Z}akharov's
  {W}ave {T}urbulence {T}heory for {C}apillary {W}aves}, SIAM J. Math. Anal.
  \textbf{50} (2018), no.~2, 2020--2047. \MR{3784110}

\bibitem{ToanBinh}
\bysame, \emph{Uniform in time lower bound for solutions to a quantum boltzmann
  equation of bosons}, Archive for Rational Mechanics and Analysis \textbf{231}
  (2019), no.~1, 63--89.

\bibitem{Peierls:1993:BRK}
R.~Peierls, \emph{Zur kinetischen theorie der warmeleitung in kristallen},
  Annalen der Physik \textbf{395} (1929), no.~8, 1055--1101.

\bibitem{PomeauBinh}
Y.~Pomeau and M.-B. Tran, \emph{Statistical physics of non equilibrium quantum
  phenomena}, Lecture Notes in Physics, Springer (2019).

\bibitem{rumpf2021wave}
B.~Rumpf, A.~Soffer, and M.-B. Tran, \emph{On the wave turbulence theory:
  ergodicity for the elastic beam wave equation}, Mathematische Zeitschrift
  \textbf{310} (2025), no.~2, 1--41.

\bibitem{soffer2018dynamics}
A.~Soffer and M.-B. Tran, \emph{On the dynamics of finite temperature trapped
  bose gases}, Advances in Mathematics \textbf{325} (2018), 533--607.

\bibitem{staffilani2025evolution}
G.~Staffilani and M.-B. Tran, \emph{Evolution of finite temperature
  bose-einstein condensates: Some rigorous studies on condensate growth}, arXiv
  e-prints (2025), arXiv--2512.

\bibitem{staffilani2025finite}
\bysame, \emph{Finite time energy cascade for mixed $3-$ and $4-$ wave kinetic
  equations}, arXiv preprint arXiv:2512.19531 (2025).

\bibitem{staffilani2025formation}
\bysame, \emph{Formation of condensations for non-radial solutions to 3-wave
  kinetic equations}, arXiv preprint arXiv:2503.17066 (2025).

\bibitem{tran2020reaction}
M.-B. Tran, G.~Craciun, L.~M. Smith, and S.~Boldyrev, \emph{A reaction network
  approach to the theory of acoustic wave turbulence}, Journal of Differential
  Equations \textbf{269} (2020), no.~5, 4332--4352.

\bibitem{walton2023numerical}
S.~Walton and M.-B. Tran, \emph{A numerical scheme for wave turbulence: 3-wave
  kinetic equations}, SIAM Journal on Scientific Computing \textbf{45} (2023),
  no.~4, B467--B492.

\bibitem{walton2024numerical}
\bysame, \emph{Numerical schemes for 3-wave kinetic equations: A complete
  treatment of the collision operator}, Journal of Computational Physics
  (2025), 114147.

\bibitem{walton2022deep}
S.~Walton, M.-B. Tran, and A.~Bensoussan, \emph{A deep learning approximation
  of non-stationary solutions to wave kinetic equations}, Applied Numerical
  Mathematics \textbf{199} (2024), 213--226.

\bibitem{zakharov1967weak}
V.~E. Zakharov and N.~N. Filonenko, \emph{Weak turbulence of capillary waves},
  Journal of applied mechanics and technical physics \textbf{8} (1967), no.~5,
  37--40.

\bibitem{zakharov2012kolmogorov}
V.~E. Zakharov, V.~S. L'vov, and G.~Falkovich, \emph{Kolmogorov spectra of
  turbulence i: Wave turbulence}, Springer Science \& Business Media, 2012.

\end{thebibliography}
	
\end{document}